\def\square{\pst@object{square}}% reads star and options and continues with \square@i
\def\square@i(#1,#2)#3{{\use@par\solid@star\psframe[origin={#1,#2}](#3,#3)}}
\DeclareFontFamily{U}{tipa}{}
\DeclareFontShape{U}{tipa}{bx}{n}{<->tipabx10}{}
\newcommand{\arc@char}{{\usefont{U}{tipa}{bx}{n}\symbol{62}}}%
\newcommand{\arc}[1]{\mathpalette\arc@arc{#1}}
\newcommand{\arc@arc}[2]{%
  \sbox0{$\m@th#1#2$}%
  \vbox{
    \hbox{\resizebox{\wd0}{\height}{\arc@char}}
    \nointerlineskip
    \box0
  }%
}
\newcommand{\doublewedge}{\big@doubleop{\wedge}}
\newcommand{\big@doubleop}[1]{%
  \DOTSB\mathop{\mathpalette\big@doubleop@aux{#1}}\slimits@
}
\newcommand\big@doubleop@aux[2]{%
  \sbox\z@{$\m@th#1#2$}%
  \makebox[1.35\wd\z@][s]{$\m@th#1#2\hss#2$}%
}
\newcommand{\abs}[1]{\left|#1\right|}     %usage: \abs{x} yields |x|.
\newcommand{\cl}{\mbox{c$\ell$}}  %needed, since $cl(A)$ treats cl as a variable.
\newcommand{\Int}{\mbox{int}} %needed, since $int(A)$ treats int as a variable.
\newcommand{\bdy}{\mbox{bdy}} %needed, since $bdy(A)$ treats bdy as a variable.
\newcommand{\sh}{\mbox{sh}}
\newcommand{\cyc}{\mbox{cyc}} %needed, since $cyc A$ treats cyc as a variable.
\newcommand{\hcyc}{\mbox{hCyc}} % simple homotopic cycle
\newcommand{\Hcyc}{\mbox{HCyc}} % mulit-homotopic cycle
\newcommand{\hSys}{\mbox{hSys}} % homotopic cycle system
\newcommand{\near}{\delta} % Cech proximity
\newcommand{\dcap}{\mathop{\cap}\limits_{\Phi}} % descriptive intersection
\newcommand{\dnear}{\delta_{\Phi}} % descriptive proximity
\newcommand{\norm}[1]{\left\|#1\right\|}  %usage: \norm{x-p} yields ||x-p||
\newcommand{\Hquad}{\hspace{0.25em}} 
\newcommand{\Hb}{\mbox{Hb}} 
\renewcommand{\thesubfigure}{\thefigure.\arabic{subfigure}}
\renewcommand{\p@subfigure}{}
\renewcommand{\@thesubfigure}{\thesubfigure:\hskip\subfiglabelskip}
\theoremstyle{plain}
\newtheorem{theorem}{Theorem}
\newtheorem{lemma}{Lemma}
\newtheorem{remark}{Remark}
\newtheorem{definition}{Definition}
\newtheorem{example}{Example}
\newtheorem{corollary}{Corollary}
\newtheorem{proposition}{Proposition}
\begin{document}

\title{Good Coverings of Proximal Alexandrov Spaces.
Path Cycles in the Extension of the Mitsuishi-Yamaguchi good covering and Jordan Curve Theorems.}

\author[J.F. Peters]{J.F. Peters}
\address{
Computational Intelligence Laboratory,
University of Manitoba, WPG, MB, R3T 5V6, Canada and
Department of Mathematics, Faculty of Arts and Sciences, Ad\.{i}yaman University, 02040 Ad\.{i}yaman, Turkey,
}
\email{james.peters3@umanitoba.ca}
\thanks{The research has been supported by the Natural Sciences \&
Engineering Research Council of Canada (NSERC) discovery grant 185986 
and Instituto Nazionale di Alta Matematica (INdAM) Francesco Severi, Gruppo Nazionale per le Strutture Algebriche, Geometriche e Loro Applicazioni grant 9 920160 000362, n.prot U 2016/000036 and Scientific and Technological Research Council of Turkey (T\"{U}B\.{I}TAK) Scientific Human
Resources Development (BIDEB) under grant no: 2221-1059B211301223.}
\author[T. Vergili]{T. Vergili}
\address{
Department of Mathematics, Karadeniz Technical University, Trabzon, Turkey,
}
\email{tane.vergili@ktu.edu.tr}

\subjclass[2010]{54E05; 55P57)}

\date{}

\dedicatory{Dedicated to Camille Jordan and S.A. Naimpally}

\begin{abstract}
This paper introduces proximal path cycles, which lead to the main results in this paper, namely, extensions of the Mitsuishi-Yamaguchi Good Coverning Theorem with
different forms of Tanaka good cover of an Alexandrov space equipped with a proximity relation as well as extension of the Jordan curve theorem.  In this work, a {\bf path cycle} is a sequence of maps $h_1,\dots,h_i,\dots,h_{n-1}\mbox{mod}\ n$ in which $h_i:[0,1]\to X$ and $h_i(1) = h_{i+1}(0)$ provide the structure of a path-connected cycle that has no end path.   An application of these results is also given for the persistence of proximal video frame shapes that appear in path cycles. 
\end{abstract}
\keywords{Cycle, Good Cover, Homotopy, Nerve, Path, Proximity.}
\maketitle
\tableofcontents

\section{Introduction}
A bounded surface region results from a path-connected sequence of paths that has no end path.  A {\bf homotopic path} (briefly, {\bf path}) is a continuous map from the unit interval into a space.  For a space $S$, $h:[0,1]\to S$ can be either a straight line from a point $h(0)\in S$ to a surface point $h(1)\in S$ or surface curved line~\cite{Puisseux1850algFns} or a cross cut (also called an {\bf ideal arc}~\cite[\S 3, p.11]{Mosher1988surface}), which is a line that punctures a surface boundary at $h(0)$, passing through the interior of the surface without self-intersections and exiting at a point $h(1)$ on the surface boundary~\cite{Eynde1992paths},~\cite{Mosher1988surface}.  The focus here is on finite, bounded, simply-connected surfaces.  A planar surface $S$ is {\bf simply connected}, provided every path $h$ lies entirely on $S$, {\em i.e.}, every path has all its interior points $h(t)\in \Int(S), t\in (0,1)$ and its end points $h(0),h(1)\in S$ and $h$ has no self-loops.  

Paths are stitched together in a sequence ({\em aka} path cycle, also called a train track~\cite[p.53]{Mosher1988surface}) to delineate a bounded surface region.  A {\bf path cycle} $E$ (denoted by $\hcyc E$) in a space $S$ is a sequence of continuous path maps $\left\{h_i\right\}_{i=0}^{(n-1)[n]}$, $h_i:I\to K,  [n] =\ \mbox{mod}\ n$ with no end path map.

This paper introduces proximal path cycles considered in terms of a Tanaka good covering of an Alexandrov space~\cite{Tanaka2021TiAgoodCover}, leading to extensions of the Mitsuishi-Yamaguchi Good Coverning Theorem~\cite{MitsuisheYamaguchi209goodCover} as well as extensions of the Jordan Curve Theorem~\cite{Jordan1893coursAnalyse}.

This paper considers the homotopy of paths~\cite[\S 2.1,p.11]{Switzer2002CWcomplex} in \v{C}ech proximity spaces~\cite[\S 2.5,p 439]{Cech1966} in which nonvoid sets are spatially close provided the sets have nonempty intersection and in descriptive proximity spaces~\cite{Peters2019vortexNerves} in which nonvoid sets are descriptively close, provided the sets have the same descriptions.  A biproduct of this work is the extension of recent forms of good coverings of topological spaces~\cite{Tanaka2021TiAgoodCover}~\cite{MitsuisheYamaguchi209goodCover} as well as a fivefold extension of the Jordan curve theorem~\cite{Jordan1893coursAnalyse}.

\vspace*{0.1cm}

The main results of this paper are

{\bf Theorem} ({\em cf.} Theorem~\ref{theorem:desGoodCover}).\\
For every descriptive proximity space $M$ on a finite collection of intersecting homotopic cycles,
\begin{compactenum}[(1)]
	\item $M$ has a good cover.
	\item The nerve of $M$ and the union of the sets in $M$ have the same homotopy type.
\end{compactenum}
\vspace*{0.1cm}

Theorem~\ref{theorem:proximalJordan}).\\
Every finite collection of intersecting homotopic cycles in a proximity space $M$ satisfies the Jordan curve theorem.

\section{Preliminaries}

This section introduces notation and basic concepts underlying proximal homotopy. \\  
Let $I = [0,1]$, the unit interval.  A \emph{\bf path} in a space $X$ is a continuous map $h:I\to X$ with endpoints $h(0)=x_0$ and $h(1)=x_1$~\cite[\S 2.1,p.11]{Switzer2002CWcomplex}.  A \emph{\bf homotopy of paths} $h,h': I\to X$ with fixed end points (denoted by $h\sim h'$), is a relation between $h$ and $h'$ defined by an associated continuous map $H: I \times I\to X$, where $H(s,t) = h_t(s)$ with $H(s,0) = h(s)$ and $H(s,1) = h'(s)$.  In effect, in a homotopy of paths $h,h'$, path $h$ is continuously transformed into path $h'$.  For $h\sim h'$, paths $h,h'$ are said to be {\bf homotopic paths}. 

From the \v{C}ech proximity $\near$ in \ref{ap:Cech}, we can consider the closeness of homotopy classes in a proximity space $(X,\near)$.  

\subsection{Proximally Continuous Maps and Gluing Lemma} 
%$\mbox{}$\\
This section introduces gluing lemma for proximity spaces, defined via proximally continuous maps over a pair of \v{C}ech proximity spaces defined in terms of the proximity ${\near}$ (for the details, see ~\ref{ap:Cech}).

\begin{definition} {\rm [Proximally continuous map]~\cite[p. 5]{Smirnov1952},\cite{Efremovic1952}.}\\ 
	%\colorbox{pink}{Citation needed!} $\mbox{}$ \\
	A map $f: (X,\delta_1) \to (Y, \delta_2)$ between two proximity spaces is proximally continuous, provided $f$ preserves proximity, {\em i.e.}, $A \ \delta_1 \ B$ implies $f(A) \ \delta_2 \  f(B)$ for $A, B \in 2^X$. 
	\quad\textcolor{blue}{\Squaresteel}
\end{definition}

\begin{remark}
	Proximally continuous maps were introduced by V.A. Efremovi\v{c}~\cite{Efremovic1952} and Yu. M. Smirnov~\cite{Smirnov1952,Smirnov1952a} in 1952 and elaborated by S.A. Naimpally and B.D. Warrack~\cite{Naimpally70} in 1970. 
	\textcolor{blue}{\Squaresteel}
	%To distinguish between proximally continuous maps
	%defined in terms of an Efremovi\v{c} proximity space and a proximally continuous map $f$ on a Lodato proximity space $(X,\near)$, we write L-proximally continuous.
\end{remark}

Lemma~\ref{thm:composition} shows that the composition of two proximally continuous maps is proximally continuous but it is also true for any types of proximally continuous maps. %Lemma~\ref{thm:composition} also provides a basis for Proposition~\ref{}, which a basic result for the composition of proximally continuous maps.

\begin{lemma}\label{thm:composition} 
	Composition of two proximally continuous maps is  proximally continuous. 
\end{lemma}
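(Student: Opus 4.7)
The plan is to unfold the definition of proximal continuity twice, once for each map, and chain the implications. Concretely, let $f: (X,\delta_1) \to (Y,\delta_2)$ and $g: (Y,\delta_2) \to (Z,\delta_3)$ be proximally continuous. I want to check that $g \circ f : (X,\delta_1) \to (Z,\delta_3)$ satisfies the defining implication $A\ \delta_1\ B \Rightarrow (g\circ f)(A)\ \delta_3\ (g\circ f)(B)$ for all $A,B \in 2^X$.

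First I would fix arbitrary $A,B \in 2^X$ with $A\ \delta_1\ B$. Applying the proximal continuity of $f$ gives $f(A)\ \delta_2\ f(B)$ in $Y$. Then, since $f(A), f(B) \in 2^Y$, applying the proximal continuity of $g$ to this pair yields $g(f(A))\ \delta_3\ g(f(B))$ in $Z$. The final step is the set-theoretic identity $g(f(A)) = (g\circ f)(A)$ and likewise for $B$, so that $(g\circ f)(A)\ \delta_3\ (g\circ f)(B)$, which is exactly the proximal continuity of $g\circ f$.

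There is essentially no obstacle here: the statement follows by a direct two-step diagram chase through the two proximity implications, and it does not use any special structure of \v{C}ech proximities beyond what is encoded in the definition itself. The only minor care point is that the argument uses only the forward implication $A\ \delta\ B \Rightarrow f(A)\ \delta'\ f(B)$, never any converse or separation-type axiom, so the same proof template will carry over verbatim to the descriptive proximity variants (and any other proximity notions) invoked later in the paper, which is presumably the reason the authors flag this generality in the sentence preceding the lemma.
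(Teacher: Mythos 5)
Your argument is correct and is exactly the paper's proof: apply the defining implication of proximal continuity first to $f$ and then to $g$, and identify $g(f(A))$ with $(g\circ f)(A)$. Nothing further is needed.
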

\begin{proof}
	Let $f : (X, \delta_1)\to (Y,\delta_2)$ and $g:(Y,\delta_2)\to (Z,\delta_3)$ be proximally continuous maps and $A \ \delta_1 \  B$ in $X$. Then $f(A)  \Hquad  \delta_2 \  f(B)$ since $f$ is proximally continuous and $g\circ f(A) \  \delta_3 \ g \circ f(B)$, since $g$ is proximally continuous. 
\end{proof}

\begin{figure}[!ht]
	\centering
	\includegraphics[width=75mm]{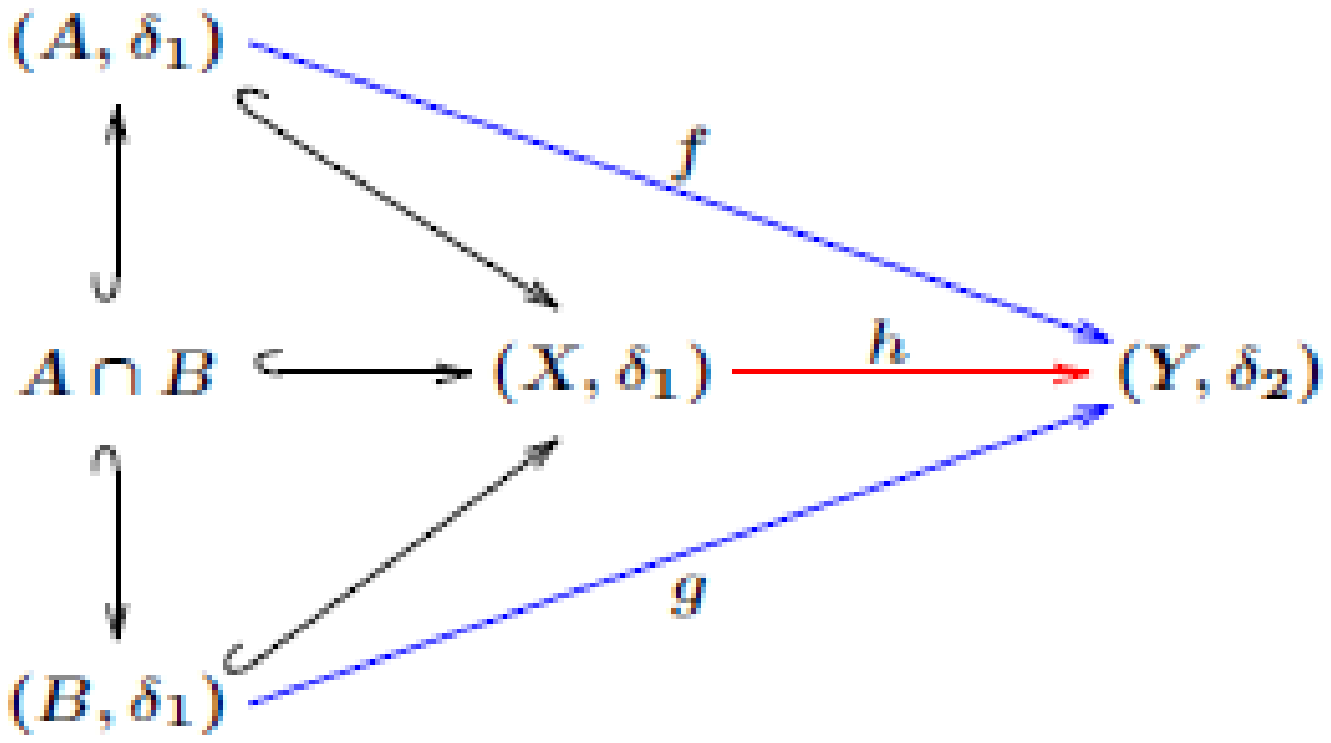}
	%\begin{pspicture}
	%%[showgrid=true]
	%(-0.2,-0.2)(5.2,3.2)
	%\centering
	%\rput(0.0,3.0){\footnotesize $\boldsymbol{\left(A,\near_1\right)}$}
	%\rput(0.0,1.5){\footnotesize $\boldsymbol{ A \cap B} $}
	%\rput(0.0,0.0){\footnotesize $\boldsymbol{\left(B,\near_1\right)}$}
	%\rput(2.2,1.5){\footnotesize $\boldsymbol{\left(X,\near_1\right)}$}
	%\rput(5.0,1.5){\footnotesize $\boldsymbol{\left(Y,\near_2\right)}$}
	%
	%% A\cap B \to A 
	%\psline[linewidth=0.5pt,linecolor=black,hooklength=3mm,hookwidth=-2mm, arrowscale=0.5]{H-v}(0.0,1.8)(0.0,2.7)
	%% A\cap B \to  B	
	%\psline[linewidth=0.5pt,linecolor=black, hooklength=3mm, hookwidth=2mm, arrowscale=0.5]{H-v}(0.0,1.2)(0.0,0.3)
	%% A\cap B \to X
	%\psline[linewidth=0.5pt,linecolor=black, hooklength=3mm, hookwidth=-2mm, arrowscale=0.5]{H-v}(0.6,1.5)(1.6,1.5)
	%% A \to X
	%\psline[linewidth=0.5pt,linecolor=black, hooklength=3mm, hookwidth=-2mm, arrowscale=0.5]{H-v}(0.5,2.75)(2.0,1.8)
	%% B\to X
	%\psline[linewidth=0.5pt,linecolor=black, hooklength=3mm, hookwidth=-2mm, arrowscale=0.5]{H-v}(0.5,0.1)(2.0,1.2)
	%% A\to Y
	%\psline[linewidth=0.5pt,linecolor=blue, arrowscale=0.5]{-v}(0.6,3)(4.5,1.65)
	%% B\to Y
	%\psline[linewidth=0.5pt,linecolor=blue, hooklength=3mm, hookwidth=-2mm, arrowscale=0.5]{-v}(0.6,0.0)(4.5,1.35)
	%% X\to Y
	%\psline[linewidth=0.5pt,linecolor=red, arrowscale=0.5]{-v}(2.8,1.5)(4.4,1.5)
	%
	%\rput(2.6,2.5){\footnotesize $\boldsymbol{f}$}
	%\rput(2.6,0.5){\footnotesize $\boldsymbol{g}$}
	%\rput(3.5,1.65){\footnotesize $\boldsymbol{h}$}
	%\end{pspicture}
	\caption[]{Gluing diagram for Proximity Spaces. Here, the black arrows represent inclusion maps and all triangles in the diagram commute.}
	\label{fig:proximalGlue}
\end{figure}

A diagram for the gluing Lemma~\ref{thm:glue} for proximity spaces is given in Fig.~\ref{fig:proximalGlue}.  This Lemma provides a basis for the proof of Theorem~\ref{thm:equivprox}.

\begin{lemma}\label{thm:glue} {\rm[Gluing Lemma for proximity spaces]} $\mbox{}$\\
	Suppose  $(X,\delta_{1})$ and $(Y,\delta_{2})$ are   proximity spaces and $A$ and $B$ are closed subsets of $X$ such that $A\cup B=X$. If $f:  (A,\delta_{1}) \to (Y, \delta_{2})$ and $g:  (B,\delta_{1}) \to (Y, \delta_{2})$ are proximally continuous maps such that $f(x)=g(x)$ for all $x\in A\cap B$, then the map $h: (X,\delta_{1}) \to (Y, \delta_{2})$ defined by 
	\[
	h(x)= \begin{cases}  f(x), & x\in A, \\  g(x), & x\in B \end{cases}	
	\]
	is also proximally continuous.
\end{lemma}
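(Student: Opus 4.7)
The plan is to verify proximal continuity of $h$ directly from the definition: given any $C, D \subseteq X$ with $C\,\delta_1\,D$, produce a witness for $h(C)\,\delta_2\,h(D)$ in $Y$ by splitting both $C$ and $D$ along the closed cover $X = A \cup B$ and then applying $f$ or $g$ to whichever piece lies on the correct side. The mechanical engine for this is the \v{C}ech union axiom, which both lets me replace the hypothesis $C\,\delta_1\,D$ by a proximity between smaller pieces of $C$ and $D$, and lets me establish $h(C)\,\delta_2\,h(D)$ from a single proximal pair of subsets of $h(C)$ and $h(D)$. Since $h(C) = f(C\cap A)\cup g(C\cap B)$ and likewise for $h(D)$, every such subpiece produced along the way lands inside $h(C)$ or $h(D)$ automatically.

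Concretely, I would decompose $C = (C \cap A) \cup (C \cap B)$ and $D = (D \cap A) \cup (D \cap B)$, reducing $C\,\delta_1\,D$ to one of four pairwise proximities $(C\cap P)\,\delta_1\,(D\cap Q)$ with $P,Q\in\{A,B\}$. The two pure cases with $P=Q$ follow immediately from proximal continuity of $f$ or $g$. For each mixed case, say $(C\cap A)\,\delta_1\,(D\cap B)$, I would refine further using $A=(A\cap B)\cup(A\setminus B)$ and $B=(A\cap B)\cup(B\setminus A)$, so that whenever the contributing piece of $C$ or $D$ meets the overlap $A\cap B$, the hypothesis $f(x)=g(x)$ on $A\cap B$ lets me treat both pieces as living in the same closed set, and apply either $f$ or $g$ to it. These sub-reductions dispose of every subcase except the residual one in which the two close pieces lie entirely in $A\setminus B$ and $B\setminus A$ respectively.

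The main obstacle is exactly this residual subcase, where the two proximate sets are disjoint and entirely miss the overlap $A\cap B$ on which $f$ and $g$ are glued. Here I would invoke closedness of $A$ and $B$ to get $\cl(C\cap A\setminus B)\subseteq A$ and $\cl(D\cap B\setminus A)\subseteq B$, combined with the monotonicity property $E\,\delta_1\,F \Rightarrow \cl(E)\,\delta_1\,\cl(F)$ and the characterization of the \v{C}ech proximity recalled in the appendix, to force the two closures to share a point $p\in A\cap B$. Proximal continuity of $f$ and $g$ then propagates $p$ through the common value $f(p)=g(p)=h(p)$, placing $h(p)$ into both $\cl(f(C\cap A\setminus B))$ and $\cl(g(D\cap B\setminus A))$ and yielding the missing proximity $h(C\cap A\setminus B)\,\delta_2\,h(D\cap B\setminus A)$, hence $h(C)\,\delta_2\,h(D)$. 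Collating all cases via the union axiom on the image side in $Y$ completes the verification.
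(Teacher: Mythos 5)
Your proof is correct in the setting the paper actually operates in, but it takes a genuinely different route from the paper's own argument. The paper's proof immediately reduces the hypothesis $C\ \delta_1\ D$ to a pair of points $c\in C$, $d\in D$ with $c=d$ or $\{c\}\ \delta_1\ \{d\}$, and then observes that closedness of $A$ and $B$ forces $c$ and $d$ into the same member of $\{A,B\}$; as a result only the ``pure'' cases ($c,d\in A$; $c,d\in B$; $c,d\in A\cap B$) ever arise, and the mixed configuration on which you spend most of your effort never appears. Your argument instead stays at the level of sets: the (P.3)-decomposition of $C$ and $D$ along $A$ and $B$, the disposal of the pure cases by proximal continuity of $f$ or $g$ plus monotonicity, and the isolation of the residual case $(C\cap A\setminus B)\ \delta_1\ (D\cap B\setminus A)$ are all sound, and your closure argument for that residual case is essentially a set-level version of the paper's pointwise step. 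Be aware, though, that both proofs lean on the same ingredient beyond the bare \v{C}ech axioms: you need $E\ \delta_1\ F$ to force $\cl E\cap \cl F\neq\emptyset$ (and the converse implication for $\delta_2$ in $Y$), while the paper needs the existence of a near pair of points witnessing $C\ \delta_1\ D$; neither holds for an arbitrary \v{C}ech proximity (the metric proximity on $\mathbb{R}$ admits disjoint closed sets at distance zero), but both are available for the \v{C}ech--Lodato proximities the appendix actually constructs. What your version buys is precision about where each hypothesis enters --- the union axiom alone settles every case but one, and that one is exactly where closedness of $A$ and $B$ and the Lodato characterization are indispensable; what the paper's version buys is brevity, at the price of burying the Lodato-type assumption in its opening sentence.
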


\begin{proof}
	Let $C,D$ be subsets of $X$ such that $C \ \delta_1 \ D$ so that these two sets are near.  That is, there exist $c\in C$ and $d\in D$ that are either equal  $c=d$ or near to each other $\{c\} \ \delta_1 \ \{d\}$.  If $c=d$, then we are done.\\
	
	Assume  $\{c\} \ \delta_{1} \ \{d\}$. Note that $c\in A  \ (\in B)$ implies $d \in A \ (\in B)$, since $A \ (B)$ is closed. Therefore we have the following three cases.
	\begin{description}
		\item[Case~1] $c,d \in A$.\\
		In that case, we have  $h(\{c\})=f(\{c\})  \   \delta_{2} \  h(\{d\})=f(\{d\})$ so that $h(C) \ \delta_{2} \ h(D)$.\\
		
		\item[Case~2] $c,d \in  B$.  \\
		In that case, we have   $h(\{c\})=g(\{c\})  \   \delta_{2} \  h(\{d\})=g(\{d\})$ so that $h(C) \ \delta_{2} \ h(D)$. \\
		
		\item[Case~3] $c,d \in A \cap B$. \\
		In that case,  we have   $h(\{c\})=f(\{c\})=g(\{c\})  \   \delta_{2} \  h(\{d\})=f(\{d\}))=g(\{d\})$  so that $h(C) \ \delta_{2} \ h(D)$.\\
	\end{description}
	In all cases, $h$ satisfies the proximal continuity property. 
\end{proof}

\subsection{Descriptive Proximity spaces}$\mbox{}$\\
Let $(X,\delta_{\Phi})$ be a descriptive  proximity space (see Appendix~\ref{app:dnear}). Then the \emph{descriptive closure of $A \subset X$} (denoted by $\cl_\Phi A$) is the set of all points in $X$ descriptively near to $A$, {\em i.e.},  
\begin{align*}
\cl_\Phi A &= \{ x\in X \ : \ x \ \delta_{\Phi} \ A  \}  \\
&=\{ x \in X \ : \ \Phi(x) \in \Phi(A)\}.
\end{align*}

\noindent Note that  $A$ is \emph{descriptively closed}, provided $\cl_\Phi A = A$.  \\

The following corollary is straightforward.

\begin{corollary} \label{cor:desclosed}
	Suppose $A$ is a descriptively closed subset of a descriptive proximity space  $(X,\delta_{\Phi})$. Then 
	\[
	x\in A \ \Leftrightarrow \  \Phi(x) \in \Phi(A).
	\]
	\textcolor{blue}{\Squaresteel}
\end{corollary}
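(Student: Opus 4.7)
The plan is to prove the biconditional by handling the two directions separately, relying almost entirely on the explicit characterization of the descriptive closure displayed just before the statement, namely $\cl_\Phi A = \{x \in X : \Phi(x) \in \Phi(A)\}$, together with the standing hypothesis that $\cl_\Phi A = A$.

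First I would dispose of the forward implication. If $x \in A$, then $\Phi(x) \in \Phi(A)$ follows immediately from the definition of the image of a set under the feature map $\Phi$. This direction needs no appeal to descriptive-closedness and in particular does not use the proximity $\delta_\Phi$ at all; it is a tautology about how image sets are formed.

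For the converse, I would assume $\Phi(x) \in \Phi(A)$. By the second equality in the definition of $\cl_\Phi A$, the element $x$ satisfies the defining condition for membership in the descriptive closure, hence $x \in \cl_\Phi A$. The hypothesis that $A$ is descriptively closed then yields $\cl_\Phi A = A$, and therefore $x \in A$.

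I do not anticipate any genuine obstacle. The whole argument is a one-step unpacking of the definition of $\cl_\Phi$ together with the hypothesis that this closure agrees with $A$, which is why the paper labels the result as straightforward. The only point worth flagging explicitly is that the equivalence $x \, \delta_\Phi \, A \Leftrightarrow \Phi(x) \in \Phi(A)$ is built into the definition of $\cl_\Phi$ recalled in the subsection, so no further manipulation of the descriptive proximity $\delta_\Phi$ (from Appendix~\ref{app:dnear}) is required.
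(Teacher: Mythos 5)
Your proof is correct and is exactly the one-step unpacking the paper has in mind: the paper omits the argument entirely, labelling the corollary ``straightforward'' immediately after displaying $\cl_\Phi A = \{x \in X : \Phi(x) \in \Phi(A)\}$ and defining descriptive closedness as $\cl_\Phi A = A$. Both directions are handled as intended, so nothing further is needed.
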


\begin{definition}  {\rm [Descriptive intersection] \cite{Peters2013mcsintro} } $\mbox{}$\\
	The descriptive intersection $A\ \dcap\ B$ of two nonempty subsets $A$ and $B$ of a descriptive  proximity space $(X,\delta_{\Phi})$, is the set of all points in $A\cup B$ such that $ \Phi(A)$ and $\Phi(B)$ have common descriptions, i.e.
	\[
	A\ \dcap\ B = \left\{x\in A\cup B: \Phi(x) \in \Phi(A)\ \cap\ \Phi(B)\right\}.
	\] 
	\textcolor{blue}{\Squaresteel}
\end{definition}

\begin{definition}  {\rm [Descriptive proximally continuous maps]}\label{def:dpc} $\mbox{}$\\
	A map $f: (X, \delta_{\Phi_1}) \to (Y, \delta_{\Phi_2})$ is descriptive proximally continuous (dpc), provided $A \ \delta_{\Phi_1} \ B$ implies   $f(A) \ \delta_{\Phi_2} \ f(B)$ for $A, B  \subset X$.
	\textcolor{blue}{\Squaresteel}
\end{definition}

\begin{theorem}\label{thm:dlpcComposition}
	Composition of two dpc  maps is dpc. 
\end{theorem}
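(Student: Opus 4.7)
The plan is to mirror the argument used for Lemma~\ref{thm:composition}, since the definition of a descriptive proximally continuous map (Definition~\ref{def:dpc}) is formally identical to the proximally continuous case once one replaces the \v{C}ech proximity $\delta_i$ by the descriptive proximity $\delta_{\Phi_i}$. So the strategy is: unfold the dpc hypothesis on $f$ to push a descriptive nearness relation from $X$ into $Y$, then unfold the dpc hypothesis on $g$ to push it from $Y$ into $Z$.

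Concretely, suppose $f:(X,\delta_{\Phi_1})\to(Y,\delta_{\Phi_2})$ and $g:(Y,\delta_{\Phi_2})\to(Z,\delta_{\Phi_3})$ are dpc, and fix $A,B\subset X$ with $A\ \delta_{\Phi_1}\ B$. By applying Definition~\ref{def:dpc} to $f$, we obtain $f(A)\ \delta_{\Phi_2}\ f(B)$ in $Y$. Treating $f(A)$ and $f(B)$ as the pair of subsets of $Y$ to which Definition~\ref{def:dpc} is now applied for $g$, we conclude $g(f(A))\ \delta_{\Phi_3}\ g(f(B))$, i.e.\ $(g\circ f)(A)\ \delta_{\Phi_3}\ (g\circ f)(B)$. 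Since $A,B$ were arbitrary descriptively near subsets of $X$, this shows $g\circ f$ is dpc.

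There is essentially no substantive obstacle here, because descriptive nearness is defined pointwise through the feature maps $\Phi_i$ in exactly the same logical form as \v{C}ech nearness, so the two-step ``push forward'' argument goes through verbatim. The only thing that deserves a remark in the written-up proof is that the intermediate nearness $f(A)\ \delta_{\Phi_2}\ f(B)$ uses the \emph{same} proximity $\delta_{\Phi_2}$ on $Y$ that appears as the domain proximity in the hypothesis on $g$; this matching of proximities on $Y$ is what makes the composition well-typed, and it is built into the way the maps are stated. Hence the proof will be a three-line chain of implications, parallel to the proof of Lemma~\ref{thm:composition}, with no additional case analysis required.
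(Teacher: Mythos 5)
Your proposal is correct and coincides with the paper's own proof: both arguments take arbitrary $A\ \delta_{\Phi_1}\ B$, apply the dpc hypothesis on $f$ to get $f(A)\ \delta_{\Phi_2}\ f(B)$, and then apply the dpc hypothesis on $g$ to conclude $g\circ f(A)\ \delta_{\Phi_3}\ g\circ f(B)$. No differences worth noting.
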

\begin{proof}
	Let $f : (X, \delta_{\Phi_1})\to (Y,\delta_{\Phi_2})$ and $g:(Y,\delta_{\Phi_2})\to (Z,\delta_{\Phi_3})$ be dpc maps and $A \ \delta_{\Phi_1} \  B$ in $X$. Then $f(A)  \Hquad  \delta_{\Phi_2}  \  f(B)$, since $f$ dpc and  $g\circ f(A) \  \delta_{\Phi_3}  \ g \circ f(B)$ since $g$ is dpc. 
\end{proof}

\begin{figure}[!ht]
	\centering
	\includegraphics[width=75mm]{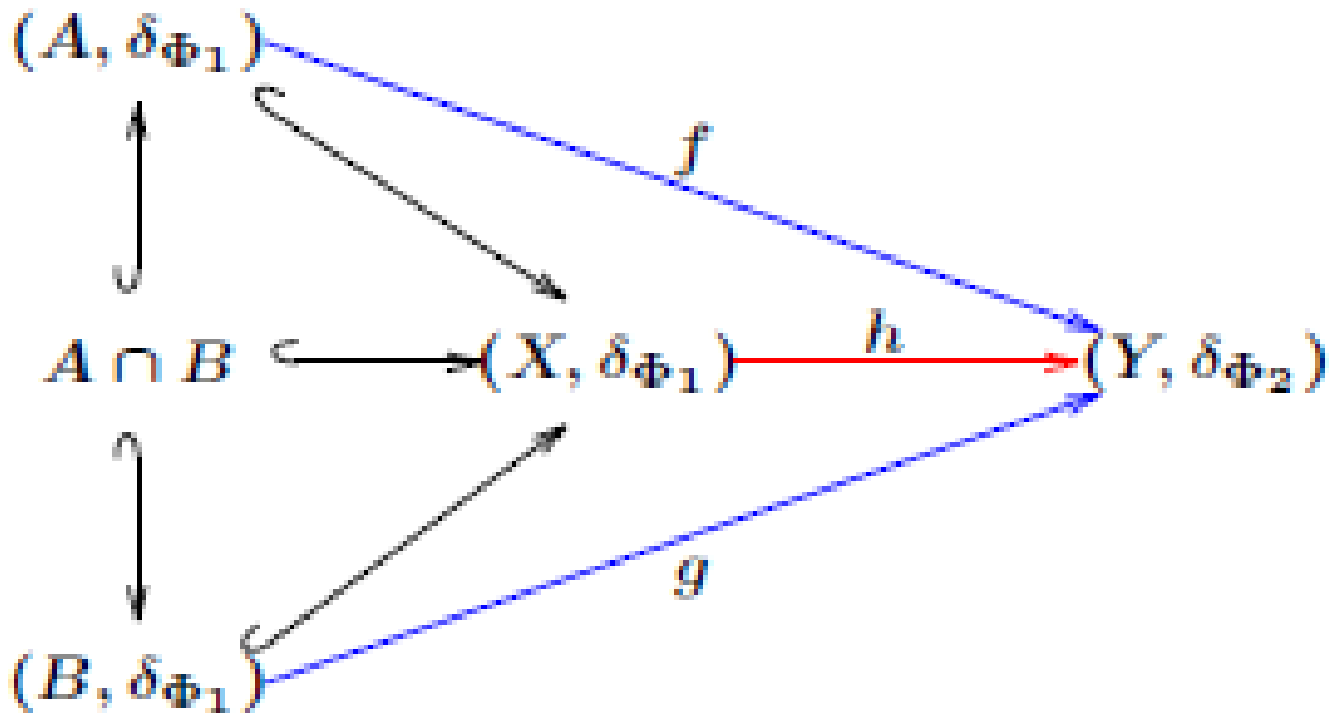}
	%\begin{pspicture}
	%%[showgrid=true]
	%(-0.2,-0.2)(5.2,3.2)
	%\centering
	%\rput(0.0,3.0){\footnotesize $\boldsymbol{\left(A,\near_{\Phi_1}\right)}$}
	%\rput(0.0,1.5){\footnotesize $\boldsymbol{ A \cap B} $}
	%\rput(0.0,0.0){\footnotesize $\boldsymbol{\left(B,\near_{\Phi_1}\right)}$}
	%\rput(2.2,1.5){\footnotesize $\boldsymbol{\left(X,\near_{\Phi_1}\right)}$}
	%\rput(5.0,1.5){\footnotesize $\boldsymbol{\left(Y,\near_{\Phi_2}\right)}$}
	%
	%% A\cap B \to A 
	%\psline[linewidth=0.5pt,linecolor=black,hooklength=3mm,hookwidth=-2mm, arrowscale=0.5]{H-v}(0.0,1.8)(0.0,2.7)
	%% A\cap B \to  B	
	%\psline[linewidth=0.5pt,linecolor=black, hooklength=3mm, hookwidth=2mm, arrowscale=0.5]{H-v}(0.0,1.2)(0.0,0.3)
	%% A\cap B \to X
	%\psline[linewidth=0.5pt,linecolor=black, hooklength=3mm, hookwidth=-2mm, arrowscale=0.5]{H-v}(0.6,1.5)(1.6,1.5)
	%% A \to X
	%\psline[linewidth=0.5pt,linecolor=black, hooklength=3mm, hookwidth=-2mm, arrowscale=0.5]{H-v}(0.5,2.75)(2.0,1.8)
	%% B\to X
	%\psline[linewidth=0.5pt,linecolor=black, hooklength=3mm, hookwidth=-2mm, arrowscale=0.5]{H-v}(0.5,0.1)(2.0,1.2)
	%% A\to Y
	%\psline[linewidth=0.5pt,linecolor=blue, arrowscale=0.5]{-v}(0.6,3)(4.5,1.65)
	%% B\to Y
	%\psline[linewidth=0.5pt,linecolor=blue, hooklength=3mm, hookwidth=-2mm, arrowscale=0.5]{-v}(0.6,0.0)(4.5,1.35)
	%% X\to Y
	%\psline[linewidth=0.5pt,linecolor=red, arrowscale=0.5]{-v}(2.8,1.5)(4.4,1.5)
	%
	%\rput(2.6,2.5){\footnotesize $\boldsymbol{f}$}
	%\rput(2.6,0.5){\footnotesize $\boldsymbol{g}$}
	%\rput(3.5,1.65){\footnotesize $\boldsymbol{h}$}
	%\end{pspicture}
	\caption[]{Gluing diagram for Descriptive Proximity Spaces. Here, the black arrows represent inclusion maps and all triangles in the diagram commute.}
	\label{fig:LproximalGlue}
\end{figure}

We adapt the gluing Lemma~\ref{thm:glue} for descriptive  proximally continuous maps.

\begin{theorem} {\rm [Descriptive Gluing]} \label{thm:desglue}\\
	Let $(X, \delta_{\Phi_1})$ and $(Y, \delta_{\Phi_2})$ be two  descriptive proximity spaces and let $A$ and $B$ be two descriptively closed subsets of $X$ with $A \cup B = X$. If $f: (A, \delta_{\Phi_1}) \to (Y, \delta_{\Phi_2})$ and $g: (B, \delta_{\Phi_1}) \to (Y, \delta_{\Phi_2})$ are dpc maps such that $f(x)=g(x)$ for all $x \in A\cap B$, then the map $h: (X, \delta_{\Phi_1})\to (Y, \delta_{\Phi_2})$ is defined by
	\[
	h(x)= \begin{cases}  f(x), & \Phi_1(x)\in \Phi_1(A) \quad   ( \equiv x \in A  \ \mbox{by}  \  Corollary~\ref{cor:desclosed}),
	\\  g(x), &  \Phi_1(x)\in \Phi_1(B)  \quad   ( \equiv x \in B   \ \mbox{by} \  Corollary~\ref{cor:desclosed}) \end{cases}	
	\]
	is also dpc.
\end{theorem}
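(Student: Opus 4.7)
The plan is to transport the case analysis of Lemma~\ref{thm:glue} into the descriptive setting, replacing each appeal to topological closedness by the corresponding appeal to descriptive closedness via Corollary~\ref{cor:desclosed}. First I would verify that $h$ is well-defined: the two branches of the case definition overlap exactly where $\Phi_1(x)\in\Phi_1(A)\cap\Phi_1(B)$, and by Corollary~\ref{cor:desclosed} this locus is precisely $A\cap B$, where $f(x)=g(x)$ by hypothesis.

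To check that $h$ is dpc, I would fix $C,D\subset X$ with $C\ \delta_{\Phi_1}\ D$ and, using the same \v{C}ech-type unpacking of nearness that Lemma~\ref{thm:glue} employs, select witnesses $c\in C$ and $d\in D$ which are either equal (so $h(c)=h(d)$ and the conclusion is immediate) or satisfy $\{c\}\ \delta_{\Phi_1}\ \{d\}$, i.e.\ $\Phi_1(c)$ matches $\Phi_1(d)$. The crucial \emph{absorption step} that replaces the sentence ``$c\in A$ implies $d\in A$, since $A$ is closed'' from the proof of Lemma~\ref{thm:glue} is the following: if $c\in A$, then $\Phi_1(c)\in\Phi_1(A)$, hence $\Phi_1(d)\in\Phi_1(A)$ as well, so the descriptive closedness of $A$ together with Corollary~\ref{cor:desclosed} forces $d\in A$. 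The same statement for $B$ holds by symmetry.

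With absorption in hand, I would split into the three cases $c,d\in A$, $c,d\in B$, and $c,d\in A\cap B$, exactly mirroring Lemma~\ref{thm:glue}. Applying the dpc-property of $f$ in the first case, of $g$ in the second, and of either (they agree on $A\cap B$) in the third, yields $h(\{c\})\ \delta_{\Phi_2}\ h(\{d\})$ in every case, hence $h(C)\ \delta_{\Phi_2}\ h(D)$, which is the desired dpc-property for $h$.

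The main obstacle I expect is precisely the descriptive absorption step, because the ambient topological intuition---that a point near a descriptively closed set must belong to it---has to be re-expressed purely in terms of the feature map $\Phi_1$. Corollary~\ref{cor:desclosed} is exactly what legalizes the substitution $\Phi_1(d)\in\Phi_1(A)\Rightarrow d\in A$; once it is invoked correctly at this one point, the rest of the argument is a verbatim translation of the gluing proof for \v{C}ech proximity spaces.
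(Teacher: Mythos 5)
Your proposal is correct and follows essentially the same route as the paper's own proof: unpack $C\ \delta_{\Phi_1}\ D$ into point witnesses $c,d$, use descriptive closedness (via Corollary~\ref{cor:desclosed}) to absorb $d$ into whichever of $A$, $B$ contains $c$, and then run the three-case analysis applying the dpc property of $f$, $g$, or both. Your explicit spelling-out of the absorption step through $\Phi_1(c)\in\Phi_1(A)\Rightarrow\Phi_1(d)\in\Phi_1(A)\Rightarrow d\in A$ is exactly the content the paper compresses into the sentence ``$c\in A$ implies $d\in A$ since $A$ is descriptively closed.''
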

\begin{proof}
	Let $C,D$ be subsets of $X$ such that $C \ \delta_{\Phi_1} \ D$  (so,  these two sets are descriptively near).  That is, there exist $c\in C$ and $d \in D$ that are either equal  $c=d$ or descriptively near to each other $\{c\} \ \delta_{\Phi_1} \ \{d\}$.  If $c=d$, then we are done.
	
	\noindent Assume $\{c\} \ \delta_{\Phi_1} \ \{d\}$.  Note that $c\in A  \ (\in B)$ implies $d \in A \ (\in B)$ since $A \ (B)$ is descriptively closed. Therefore we have the following three cases.
	\begin{description}
		\item[Case~1] $c,d \in A$.\\
		In that case, we have  $h(\{c\})=f(\{c\})  \   \delta_{\Phi_2} \  h(\{d\})=f(\{d\})$  so that  $h(C) \  \delta_{\Phi_2} \ h(D)$.\\
		
		\item[Case~2] $c,d \in  B$.  \\
		In that case, we have   $h(\{c\})=g(\{c\})  \    \delta_{\Phi_2}  \  h(\{d\})=g(\{d\})$  so that $h(C) \  \delta_{\Phi_2}  \ h(D)$. \\
		
		\item[Case~3] $c,d \in A \cap B$. \\
		In that case, we have   $h(\{c\})=f(\{c\})=g(\{c\})  \    \delta_{\Phi_2}  \  h(\{d\})=f(\{d\}))=g(\{d\})$  so that $h(C) \  \delta_{\Phi_2}  \ h(D)$.\\
	\end{description}
	In all cases, $h$ satisfies the descriptive proximal continuity property. 
\end{proof}

\section{Proximal Homotopy}
For two proximity spaces $(X,\delta_1)$ and $(Y,\delta_2)$,  let  $X\times Y$ denote their product.  Then the subsets   $A \times B$ and  $C \times D$ of $X\times Y$ are near, provided  $A \ \delta_1 \ C$ and $B\ \delta_2 \ D$.
%}
%\vspace*{0.1cm}

\begin{definition} {\rm  [Proximal Homotopy]} $\mbox{}$\\
	Let $(X,\delta_1)$ and $(Y,\delta_2)$  be proximity  spaces and $f,g: (X,\delta_1) \to (Y,\delta_2)$ proximally continuous maps. Then we say $f$ and $g$ are proximally homotopic,  provided there exists a proximally continuous  map $H: X \times [0,1] \to Y$ such that  $H(x,0)=f(x)$  and  $H(x,1)=g(x)$. Such a map $H$ is called an proximal homotopy between $f$ and $g$.  In keeping with Hilton's notation~\cite{Hilton1952homotopy}, we write  $f \mathop{\sim}\limits_{\delta} g$, provided there is  a proximal homotopy between them.
	\textcolor{blue}{\Squaresteel}
\end{definition}

\begin{proposition} \label{thm:equivprox}
	Every proximal homotopy relation is an equivalence relation.
\end{proposition}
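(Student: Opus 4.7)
The plan is to verify the three properties of an equivalence relation — reflexivity, symmetry, and transitivity — for $\mathop{\sim}\limits_{\delta}$, with the composition Lemma~\ref{thm:composition} and the Gluing Lemma~\ref{thm:glue} doing the real work in the last two cases.

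For \textbf{reflexivity}, given a proximally continuous $f\colon (X,\delta_1)\to(Y,\delta_2)$, I would take the constant homotopy $H\colon X\times[0,1]\to Y$, $H(x,t)=f(x)$. This factors as $f$ composed with the projection $X\times[0,1]\to X$, both of which are proximally continuous, so $H$ is proximally continuous by Lemma~\ref{thm:composition}, and clearly $H(x,0)=H(x,1)=f(x)$, giving $f\mathop{\sim}\limits_{\delta} f$.

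For \textbf{symmetry}, if $f\mathop{\sim}\limits_{\delta} g$ via $H\colon X\times[0,1]\to Y$, I would set $H'(x,t)=H(x,1-t)$. The reflection $r\colon X\times[0,1]\to X\times[0,1]$, $r(x,t)=(x,1-t)$, is proximally continuous (it preserves the product proximity since the identity on $X$ and $t\mapsto 1-t$ on $[0,1]$ are both proximally continuous). Then $H'=H\circ r$ is proximally continuous by Lemma~\ref{thm:composition}, and $H'(x,0)=g(x)$, $H'(x,1)=f(x)$, hence $g\mathop{\sim}\limits_{\delta} f$.

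For \textbf{transitivity}, suppose $f\mathop{\sim}\limits_{\delta} g$ via $H$ and $g\mathop{\sim}\limits_{\delta} k$ via $K$. I would define $L\colon X\times[0,1]\to Y$ by
\[
L(x,t)=\begin{cases} H(x,2t), & t\in[0,\tfrac12], \\ K(x,2t-1), & t\in[\tfrac12,1]. \end{cases}
\]
Set $A=X\times[0,\tfrac12]$ and $B=X\times[\tfrac12,1]$; these are closed subsets of $X\times[0,1]$ with $A\cup B=X\times[0,1]$. On $A\cap B=X\times\{\tfrac12\}$ the two formulas agree because $H(x,1)=g(x)=K(x,0)$. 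The restriction of $L$ to $A$ is the composition of $H$ with the proximally continuous rescaling $(x,t)\mapsto(x,2t)$, hence is proximally continuous by Lemma~\ref{thm:composition}, and similarly on $B$. The Gluing Lemma~\ref{thm:glue} then yields that $L$ is proximally continuous on all of $X\times[0,1]$, with $L(x,0)=f(x)$ and $L(x,1)=k(x)$, so $f\mathop{\sim}\limits_{\delta} k$.

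The only step that is not essentially formal is checking that the rescaling and reflection maps on $X\times[0,1]$ are proximally continuous with respect to the product proximity — this is the main obstacle, though a mild one, since it reduces to the fact that affine self-maps of $[0,1]$ preserve $\delta$ and that the product proximity is preserved coordinatewise by proximally continuous maps on each factor.
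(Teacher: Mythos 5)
Your proof is correct and follows essentially the same route as the paper: the paper dismisses reflexivity and symmetry as straightforward (you supply the constant homotopy and the reflection, which is exactly what is intended) and proves transitivity with the identical piecewise reparametrized homotopy glued via Lemma~\ref{thm:glue}. Your explicit attention to the proximal continuity of the rescaling and reflection maps on $X\times[0,1]$ is a detail the paper leaves implicit, but it does not change the argument.
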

\begin{proof}
	A check that $\mathop{\sim}\limits_{\delta}$ is reflexive and symmetric is straightforward. 
	
	\noindent Now let $F$ and $G$ be  proximal homotopies between $f$ and $g$ and  between $g$ and $h$, respectively. Then the function $H: X \times [0,1] \to Y$ defined by 
	\[
	H(x,t)= \begin{cases}  F(x, 2t), & t \in [0,\frac{1}{2}]     \\   G(x, 2t-1), & t\in [\frac{1}{2},1]   \end{cases}
	\]
	is proximally continuous by Theorem~\ref{thm:glue},  so that this defines an proximal homotopy between $f$ and $h$. 
\end{proof}

\begin{definition} {\rm [Relative proximal Homotopy]} $\mbox{}$\\ 
	Let $(X,\delta_1)$ and $(Y,\delta_2)$  be proximity spaces and $A \subset X$. Then two proximally continuous  maps $f, g: (X,\delta_1) \to (Y,\delta_2)$  are said to be proximally homotopic relative to $A$, provided there exists an proximal homotopy $H$ between $f$ and $g$ such that $H(a,t)=f(a)=g(a)$ for all $a \in A$ and $t\in [0,1]$.  We write $f \mathop{\sim}\limits_{\delta} g \ \mbox{(rel A)}$, provided there is a proximal homotopy relative to $A$. 
	\textcolor{blue}{\Squaresteel}
\end{definition}

\begin{proposition}
	Suppose $f,g: (X, \delta_1) \to (Y, \delta_2)$ are proximally homotopic.  If  $h: (Y, \delta_2) \to (Z, \delta_3)$ is proximally continuous, then the maps $h\circ f$ and $h\circ g$ are also proximally homotopic.
\end{proposition}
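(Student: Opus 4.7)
The plan is to construct the desired proximal homotopy between $h\circ f$ and $h\circ g$ by simply post-composing the given homotopy with $h$. Since $f \mathop{\sim}\limits_{\delta} g$, by definition there exists a proximally continuous map $H: X\times [0,1]\to Y$ satisfying $H(x,0)=f(x)$ and $H(x,1)=g(x)$. I would set $K \assign h\circ H : X\times [0,1]\to Z$ and argue that $K$ serves as a proximal homotopy witnessing $h\circ f \mathop{\sim}\limits_{\delta} h\circ g$.

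Two verifications are required. First, the endpoint conditions: evaluating at $t=0$ gives $K(x,0) = h(H(x,0)) = h(f(x)) = (h\circ f)(x)$, and identically at $t=1$ one obtains $K(x,1) = (h\circ g)(x)$. Second, proximal continuity of $K$: since $K = h\circ H$ is the composition of the proximally continuous maps $H$ and $h$, Lemma~\ref{thm:composition} immediately yields that $K$ is proximally continuous.

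Combining the two observations gives a proximally continuous map $K: X\times [0,1]\to Z$ that interpolates between $h\circ f$ and $h\circ g$, which is precisely a proximal homotopy between them. I do not anticipate any genuine obstacle; the proposition is essentially a functoriality statement, reducing entirely to the closure of proximal continuity under composition already established in Lemma~\ref{thm:composition}.
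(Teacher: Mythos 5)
Your proposal is correct and follows essentially the same route as the paper: both post-compose the given proximal homotopy with $h$ and invoke Lemma~\ref{thm:composition} to conclude proximal continuity of the resulting map. Your version is, if anything, slightly more explicit about verifying the endpoint conditions.
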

\begin{proof}
	Let $F: X \times [0,1] \to Y$ be the proximal homotopy between $f$ and $g$ so that $F(x,0)=f(x)$ and $F(x,1)=g(x)$. Note that $h\circ f$ and $h\circ g$ are proximally continuous by Lemma~\ref{thm:composition} and  the map $H: X \times [0,1] \to Y$ defined by $H(x,t)=h\circ F(x,t)$ is the desired proximal homotopy between them. 
\end{proof}

\begin{proposition}
	Suppose $f,g: (X, \delta_1) \to (Y, \delta_2)$ are proximally homotopic.  If  $k: (W, \delta_0) \to (X, \delta_1)$ is proximally continuous,  then the maps $f\circ k$ and $g\circ k$ are also proximally homotopic.
\end{proposition}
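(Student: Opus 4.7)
The plan is to mirror the previous proposition almost verbatim, but now pre-compose the given proximal homotopy with $k$ instead of post-composing with $h$. Let $F\colon X\times[0,1]\to Y$ be a proximal homotopy between $f$ and $g$, so $F(x,0)=f(x)$ and $F(x,1)=g(x)$. I would then define
\[
H\colon W\times[0,1]\to Y,\qquad H(w,t)=F\bigl(k(w),t\bigr),
\]
and check the two endpoint identities $H(w,0)=(f\circ k)(w)$ and $H(w,1)=(g\circ k)(w)$, which are immediate from the definition of $F$.

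The substantive point is then the proximal continuity of $H$. I would factor $H$ as
\[
H = F\circ\bigl(k\times \mathrm{id}_{[0,1]}\bigr),
\]
and invoke Lemma~\ref{thm:composition} (composition of proximally continuous maps is proximally continuous). The input $F$ is proximally continuous by assumption, and $\mathrm{id}_{[0,1]}$ is trivially so; what remains is $k\times \mathrm{id}_{[0,1]}\colon W\times[0,1]\to X\times[0,1]$. Using the product proximity recalled just before the definition of proximal homotopy (basic near-pairs are $A\times B$ and $C\times D$ with $A\ \delta_0\ C$ and $B\ \delta_{[0,1]}\ D$), I would verify that if $A\times B \ \delta\ C\times D$ in $W\times[0,1]$, then $k(A)\ \delta_1\ k(C)$ (since $k$ is proximally continuous) and $B\ \delta_{[0,1]}\ D$, hence $(k\times\mathrm{id})(A\times B)\ \delta\ (k\times\mathrm{id})(C\times D)$ in $X\times[0,1]$.

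The likely obstacle is purely this product-continuity step, because the paper never states a general ``product of proximally continuous maps is proximally continuous'' lemma; it only sketches when two product sets are near. I would therefore argue it inline, either by the basic-pair description above or, if one prefers arbitrary subsets $S\subseteq W\times[0,1]$, by reducing via the projections $\pi_W,\pi_{[0,1]}$ (both proximally continuous) and using that $S\ \delta\ T$ forces $\pi_W(S)\ \delta_0\ \pi_W(T)$ and $\pi_{[0,1]}(S)\ \delta_{[0,1]}\ \pi_{[0,1]}(T)$. Once $k\times\mathrm{id}_{[0,1]}$ is in hand, Lemma~\ref{thm:composition} delivers proximal continuity of $H$, and the endpoint computation above shows $H$ is a proximal homotopy between $f\circ k$ and $g\circ k$, as desired.
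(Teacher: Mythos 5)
Your construction $H(w,t)=F(k(w),t)$ is exactly the paper's proof (the paper writes $K(z,t)=F(k(z),t)$ and simply asserts it is the desired proximal homotopy). The only difference is that you explicitly justify proximal continuity of $H$ via the factorization through $k\times\mathrm{id}_{[0,1]}$ and the product proximity, a step the paper silently omits; this is a welcome tightening rather than a different route.
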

\begin{proof}
	Let $F: X \times [0,1] \to Y$ be the proximal homotopy between $f$ and $g$ so that $F(x,0)=f(x)$ and $F(x,1)=g(x)$. Note that $f\circ k$ and $g\circ k$ are proximally continuous by Lemma~\ref{thm:composition} and  the map $K: Z \times [0,1] \to Y$ defined by $K(z,t)=F(k(z),t)$ is the desired proximal homotopy between them. 
\end{proof}

\begin{definition}
	A proximally continuous map is proximally nullhomotopic, provided it is proximally homotopic to a constant map. 
	\textcolor{blue}{\Squaresteel}
\end{definition}

\begin{definition}
	A proximity space is proximally contractible, provided the identity map on it is proximally homotopic to a constant map. 
	\quad\textcolor{blue}{\Squaresteel}
\end{definition}

\begin{definition}
	Two proximity spaces $(X, \delta_1)$ and $(Y, \delta_2)$ are proximally homotopy equivalent, provided there exist  proximally continuous maps $f: (X, \delta_1) \to (Y, \delta_2)$ and $g: (Y, \delta_2) \to (X, \delta_1)$ such that $g\circ f$  and $f \circ g$ are proximally homotopic to the identity maps on $X$ and $Y,$ respectively. 
\end{definition}

\subsection{Homotopy between descriptive proximally continuous maps}$\mbox{}$\\
%{\color{blue}
The results for pairs of proximity spaces given so far hold for proximity spaces without restrictions.  
%In this section, results for descriptive proximity spaces  are given. The product of  descriptive proximity spaces  is also a descriptive proximity  space (see Theorem~\ref{thm:dlpProduct}).\\
%%}
\begin{proposition}\label{thm:dlpProduct}
	The product of descriptive proximity spaces is a descriptive proximity  space.
\end{proposition}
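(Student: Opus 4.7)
The plan is to exhibit an explicit descriptive proximity relation on the Cartesian product and check that it satisfies the \v{C}ech-style axioms recalled in Appendix~\ref{app:dnear}. Given $(X,\delta_{\Phi_1})$ and $(Y,\delta_{\Phi_2})$, I would first define the product feature map
\[
\Phi: X\times Y\to \Phi_1(X)\times\Phi_2(Y),\qquad \Phi(x,y)=(\Phi_1(x),\Phi_2(y)),
\]
and then define the product descriptive proximity $\delta_\Phi$ on $X\times Y$ by declaring $U\ \delta_\Phi\ V$ (for $U,V\subseteq X\times Y$) iff $\Phi(U)\cap\Phi(V)\neq\varnothing$. By the paragraph preceding the Proximal Homotopy definition, this specialises on product subsets to $A\times B\ \delta_\Phi\ C\times D$ iff $A\ \delta_{\Phi_1}\ C$ and $B\ \delta_{\Phi_2}\ D$, so the definition is consistent with the product convention already used in the paper.

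Next I would verify the four axioms in order. Reflexivity on nonempty sets and the non-proximity with $\varnothing$ follow immediately from $\Phi(U)=\varnothing\iff U=\varnothing$ and $\Phi(U)\cap\Phi(U)=\Phi(U)$. Symmetry is immediate from the symmetric form of the defining condition. For the union axiom, I would use that $\Phi$ is set-valued and satisfies $\Phi(U\cup V)=\Phi(U)\cup\Phi(V)$, so that
\[
\Phi(W)\cap\Phi(U\cup V)=\bigl(\Phi(W)\cap\Phi(U)\bigr)\cup\bigl(\Phi(W)\cap\Phi(V)\bigr),
\]
and the desired disjunction $W\ \delta_\Phi\ (U\cup V)\iff W\ \delta_\Phi\ U\text{ or }W\ \delta_\Phi\ V$ follows. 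Finally, if $U\cap V\neq\varnothing$ then picking $(x,y)\in U\cap V$ gives $\Phi(x,y)\in\Phi(U)\cap\Phi(V)$, so the intersection axiom holds.

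The main obstacle I anticipate is a bookkeeping one rather than a conceptual one: one must be careful not to assume that arbitrary subsets of $X\times Y$ factor as products $A\times B$, so the verification has to be phrased entirely in terms of the feature-set condition $\Phi(U)\cap\Phi(V)\neq\varnothing$ and only \emph{afterwards} restricted to product subsets in order to match the convention that governs the subsequent homotopy results. Provided the definition is written in the ambient form above, each axiom reduces to an elementary identity about images of unions and intersections under $\Phi$, and the proposition follows.
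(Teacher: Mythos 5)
Your proof is correct but takes a genuinely different route from the paper's. The paper's proof consists only of a definition: it puts the relation $A\ \delta_{\Phi}\ B$ iff $\mathrm{pr}_i(A)\ \delta_{\Phi_i}\ \mathrm{pr}_i(B)$ for all $i$ on an arbitrary indexed product $\prod_{i\in J}X_i$, and stops there without verifying any of the descriptive proximity axioms. You instead induce the relation from the product probe function $\Phi=(\Phi_1,\Phi_2)$ via the condition $\Phi(U)\cap\Phi(V)\neq\varnothing$ and then check (dP.0)--(dP.3) explicitly. The two relations coincide on product subsets $A\times B$, $C\times D$ (which is all the subsequent homotopy material uses), but they differ on arbitrary subsets of $X\times Y$: yours demands a single pair of points witnessing closeness in both coordinates simultaneously, while the paper's only demands coordinatewise witnesses that may come from different points. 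What your version buys is a clean verification of the union axiom, since $\Phi(U\cup V)=\Phi(U)\cup\Phi(V)$ makes (dP.3) an identity about images; the paper's projectionwise definition is actually harder to reconcile with (dP.3) for non-product sets, because $\mathrm{pr}_i(W)$ being near $\mathrm{pr}_i(U)\cup\mathrm{pr}_i(V)$ for every $i$ can be witnessed by $U$ in one coordinate and by $V$ in another, so the required disjunction need not follow. One small point of bookkeeping: for (dP.2) you verify the axiom for ordinary intersection, whereas the descriptive axiom is stated with the descriptive intersection $U\ \dcap\ V$; the same one-line argument covers it, since any point of $U\ \dcap\ V$ already supplies a common value in $\Phi(U)\cap\Phi(V)$.
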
 
\begin{proof}
	%\colorbox{pink}{The following proof is new.} \\
	Let $\{(X_i, \delta_{\Phi_i})\}_{i\in J}$ be a family of descriptive proximity spaces spaces, where $J$ is an index set. Then we can define a descriptive nearness relation $\delta_{\Phi}$ on the product space  $X:= \prod_{i\in J} X_i$  with the probe function $\Phi:=\prod_{i\in J} \Phi_i $ by declaring that two subsets $A, B$ of $X$ are descriptively near, provided $A \  \delta_{\Phi} \  B$ if and only if $\mathrm{pr}_i(A) \ \delta_{\Phi_i} \ \mathrm{pr}_i(B)$ for all $i \in J$, where $\mathrm{pr}_i$ is the i$^{th}$  projection map of $X$ onto $X_i$.
\end{proof}

%\vspace*{0.1cm}

\begin{remark}\label{rem:descriptiveNearnessRelation}
	To define the descriptive homotopy between dpc maps, we impose a descriptive nearness relation on the closed interval $[0,1]$ in the following manner. Two subsets $A$ and $B$ of $[0,1]$ are descriptively near, provided $D(A,B)=0$ (that is, the descriptive proximity relation and the (metric) proximity relation coincide).
	\textcolor{blue}{\Squaresteel}
\end{remark}
%\vspace*{0.1cm}

The descriptive nearness relation introduced in Remark~\ref{rem:descriptiveNearnessRelation}
leads to descriptive homotopic maps.\\

\begin{definition} {\rm  [{\bf Descriptive Proximal Homotopy}]} $\mbox{}$ \\
	Let $(X,\delta_{\Phi_1})$ and $(Y,\delta_{\Phi_2})$  be descriptive proximity spaces and $f, g: (X,\delta_{\Phi_1}) \to (Y,\delta_{\Phi_2})$ dpc  maps. Then we say $f$ and $g$ are  descriptive proximally homotopic, provided there exists a dpc  map $H: X \times [0,1] \to Y$ such that $H(x,0)=f(x)$  and  $H(x,1)=g(x)$. Such a map $H$ is called a descriptive proximal homotopy between $f$ and $g$. We denote $f \mathop{\sim}\limits_{\Phi}  g$, provided  there exists a descriptive proximal homotopy between them.
	\textcolor{blue}{\Squaresteel}
\end{definition}

%\vspace*{0.1cm}

\begin{proposition}\label{thm:LproximalHomotopy}
	Every descriptive proximal homotopy relation is an equivalence relation.
\end{proposition}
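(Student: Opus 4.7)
The plan is to mirror the proof of Proposition~\ref{thm:equivprox} step by step, replacing ordinary proximities and the ordinary Gluing Lemma~\ref{thm:glue} by descriptive proximities and the Descriptive Gluing Theorem~\ref{thm:desglue}. Concretely, I would verify reflexivity, symmetry, and transitivity of $\mathop{\sim}\limits_{\Phi}$ in turn.

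First, for reflexivity, given a dpc map $f:(X,\delta_{\Phi_1})\to (Y,\delta_{\Phi_2})$, define $H:X\times[0,1]\to Y$ by $H(x,t)=f(x)$. Then $H$ is the composition of the projection $X\times[0,1]\to X$ with $f$, both of which are dpc, so $H$ is dpc by Theorem~\ref{thm:dlpcComposition}, with $H(x,0)=H(x,1)=f(x)$. For symmetry, if $F$ is a descriptive proximal homotopy from $f$ to $g$, define $H(x,t)=F(x,1-t)$. Using Proposition~\ref{thm:dlpProduct}, the map $(x,t)\mapsto (x,1-t)$ is dpc on the product $X\times[0,1]$ (the nearness relation on $[0,1]$ in Remark~\ref{rem:descriptiveNearnessRelation} is clearly preserved by $t\mapsto 1-t$), so composing with $F$ gives a dpc homotopy from $g$ to $f$.

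For transitivity, let $F$ be a descriptive proximal homotopy from $f$ to $g$ and $G$ from $g$ to $h$. Define
\[
H(x,t)= \begin{cases}  F(x, 2t), & t \in [0,\tfrac{1}{2}] \\   G(x, 2t-1), & t\in [\tfrac{1}{2},1]   \end{cases}
\]
on $X\times[0,1]$. The two pieces agree on the overlap $X\times\{\tfrac12\}$ because $F(x,1)=g(x)=G(x,0)$. The sets $A=X\times[0,\tfrac12]$ and $B=X\times[\tfrac12,1]$ are descriptively closed in $X\times[0,1]$ (with respect to the product descriptive proximity from Proposition~\ref{thm:dlpProduct}), since the descriptive proximity on $[0,1]$ coincides with the metric proximity and the two subintervals are metrically closed, and $A\cup B=X\times[0,1]$. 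The reparametrisations $t\mapsto 2t$ and $t\mapsto 2t-1$ preserve the descriptive nearness on their domains, so the restrictions $H|_A$ and $H|_B$ are dpc. Applying the Descriptive Gluing Theorem~\ref{thm:desglue} yields that $H$ is dpc, and by construction $H(x,0)=f(x)$, $H(x,1)=h(x)$, so $f\mathop{\sim}\limits_{\Phi} h$.

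The main obstacle is not the combinatorial bookkeeping, which is essentially identical to Proposition~\ref{thm:equivprox}, but rather ensuring that the ingredients used in the ordinary case transfer correctly to the descriptive setting: namely that $A$ and $B$ above are descriptively closed in the product space, and that the affine reparametrisations of $[0,1]$ used to glue $F$ and $G$ are descriptive proximally continuous. Both follow from the choice in Remark~\ref{rem:descriptiveNearnessRelation} that the descriptive proximity on $[0,1]$ agrees with the metric proximity, which reduces these checks to the corresponding standard facts already implicit in the proof of Proposition~\ref{thm:equivprox}.
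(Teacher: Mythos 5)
Your proof follows the same route as the paper's: reflexivity and symmetry are noted as routine, and transitivity is established by the standard concatenation $H(x,t)=F(x,2t)$ for $t\le\tfrac12$, $G(x,2t-1)$ for $t\ge\tfrac12$, glued via Theorem~\ref{thm:desglue}. You additionally spell out the checks the paper leaves implicit (descriptive closedness of $X\times[0,\tfrac12]$ and $X\times[\tfrac12,1]$, and dpc-ness of the affine reparametrisations), which is a welcome but not divergent elaboration.
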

\begin{proof}
	It's easy to check that $ \mathop{\sim}\limits_{\Phi}$ is reflexive and symmetric. 
	
	\noindent Let $F$ and $G$ are the descriptive proximal homotopies between $f$ and $g$ and  between $g$ and $h$, respectively. Then the function $H: X \times [0,1] \to Y$ defined by 
	\[
	H(x)= \begin{cases}  F(x, 2t), & t \in [0,\frac{1}{2}],     \\   G(x, 2t-1), & t\in [\frac{1}{2},1]   \end{cases}
	\]
	is dpc by Theorem~\ref{thm:desglue},  so that this defines a descriptive proximal homotopy between $f$ and $h$.  
\end{proof}
%\vspace*{0.1cm}

\begin{definition} {\rm [{\bf Descriptive proximal Relative Homotopy}]} $\mbox{}$ \\
	Let $(X,\delta_{\Phi_1})$ and $(Y,\delta_{\Phi_2})$  be descriptive proximity spaces and $A \subset X$. Then  two dpc maps $f, g: (X,\delta_{\Phi_1}) \to (Y,\delta_{\Phi_2})$  are said to be descriptive proximally homotopic relative to $A$, provided there exists a descriptive proximal homotopy $H$ between $f$ and $g$ such that $H(a,t)=f(a)=g(a)$ for all $a \in A$ and $t\in [0,1]$.  We write $f \mathop{\sim}\limits_{\Phi}   g \ \mbox{(rel A)}$, provided there is a descriptive proximal homotopy relative to $A$. \textcolor{blue}{\Squaresteel}
\end{definition}

%\vspace{0.1cm}

\subsection{Paths in proximity spaces}
\label{subsec:pathsanddescriptivepaths}

From Remark~\ref{rem:descriptiveNearnessRelation}, we know that the descriptive nearness relation also induces a descriptive proximity relation on $[0,1]$. This leads to the introduction of (descriptive) proximal paths in a (descriptive) proximity space. 

%\vspace*{0.1cm}

\begin{definition} {\rm [{\bf Proximal Path}]}$\mbox{}$\\
	Let  $(X,\delta)$ be a proximity space and $x_0, x_1 \in X$.  Then a {\bf proximal path} between $x_0$ and $x_1$ is an proximally continuous map $\alpha : [0,1] \to X$  such that $\alpha(0)=x_0$ and $\alpha(1)=x_1$, {\em i.e.}, for two subsets of $A, B$ in $[0,1]$, $D(A,B)=0$ implies $\alpha(A)\  \delta \ \alpha(B)$.
	\textcolor{blue}{\Squaresteel}
\end{definition}

\noindent In this section, we introduce constant proximal paths and their descriptive forms.

\begin{definition} {\rm[{\bf Constant Proximal Path}]}$\mbox{}$\\
	For a proximity space $(X,\delta)$, the constant proximal path $c: [0,1] \to X$  at  $x_0\in X$  is the proximal path such that $c(t)=x_0$ for every $t\in [0,1]$. 
	\textcolor{blue}{\Squaresteel}
\end{definition}

\begin{definition}\label{def:descriptiveProxPath}
	{\rm	[{\bf Descriptive proximal path}]} $\mbox{}$\\
	Let  $(X,\delta_\Phi)$ be a descriptive proximity space and $x_0, x_1 \in X$.  Then a \emph{descriptive proximal path} between $x_0$ and $x_1$ is a dpc map $\alpha : [0,1] \to X$ such that $\alpha(0)=x_0$ and $\alpha(1)=x_1$, {\em i.e.}, for two subsets of $A, B$ in $[0,1]$, $D(A,B)=0$ implies $\alpha(A)\  \delta_\Phi\ \alpha(B)$.
	\textcolor{blue}{\Squaresteel}
\end{definition}

Descriptive  proximally continuous maps were informally introduced in~\cite{Peters2016ComputationalProximity},
defined here in terms of path descriptions, utilizing the descriptive proximity relation $\dnear$ (see ~\ref{app:dnear}).\\

\begin{definition} {\rm [{\bf Path description}]}$\mbox{}$ \\
	Let $h,k$ be proximally homotopic paths in a proximity space $X$.
	\begin{align*}
	\boldsymbol{\Phi(h)} &= \overbrace{\left\{\ \Phi(h(s)):\ s \in [0,1] \right\} \subseteq \mathbb{R}^n}^{\mbox{\textcolor{blue}{\bf set of  feature vectors that  describe path $h$}}}.\\
	\boldsymbol{\Phi(h) = \Phi(k)} &\overbrace{\Leftrightarrow  \boldsymbol{h\ \dnear\ k}.}^{\mbox{\textcolor{blue}{\bf descriptively close paths}}}\\
	\end{align*}
	
	Similarly, for descriptively close homotopy classes $[h], [k]$, we write 
	\[
	\Phi([h]) = \Phi([k]) \overbrace{\Leftrightarrow  \boldsymbol{[h]\ \dnear\ [k].}}^{\mbox{\textcolor{blue}{\bf descriptively close path classes}}}\mbox{\quad\textcolor{blue}{\Squaresteel}}
	\]
\end{definition}

In other words, the closeness of descriptions of paths (and path classes) is expressed using the descriptive proximity relation $\dnear$.  \\

\begin{definition}{\rm [{\bf Descriptive Proximally Continuous}]}$\mbox{}$ \\
	Let $[h],[k]$ be nonempty classes of paths in a  proximity space $X$.  A map $f:(2^X\times I,\dnear)\to (2^X\times I,\dnear)$ is \emph{descriptive proximally continuous} (dpc), provided
	\[
	[h]\ \dnear\ [k]\ \mbox{implies}\ f([h])\ \dnear\ f([k]).
	\]
\end{definition}

Unlike the constant proximal path, descriptive proximal paths (from Def.\ref{def:descriptiveProxPath}) fall into two niches, namely, {\bf (ordinary descriptive) constant paths} and {\bf degenerate  descriptive constant paths}, introduced in this section. 
These proximal paths lead to introduction of descriptive contractibility and an extended form of Tanaka good cover.\\

%\vspace*{0.4cm}

\begin{figure}[!ht]
	\centering
	\includegraphics[width=75mm]{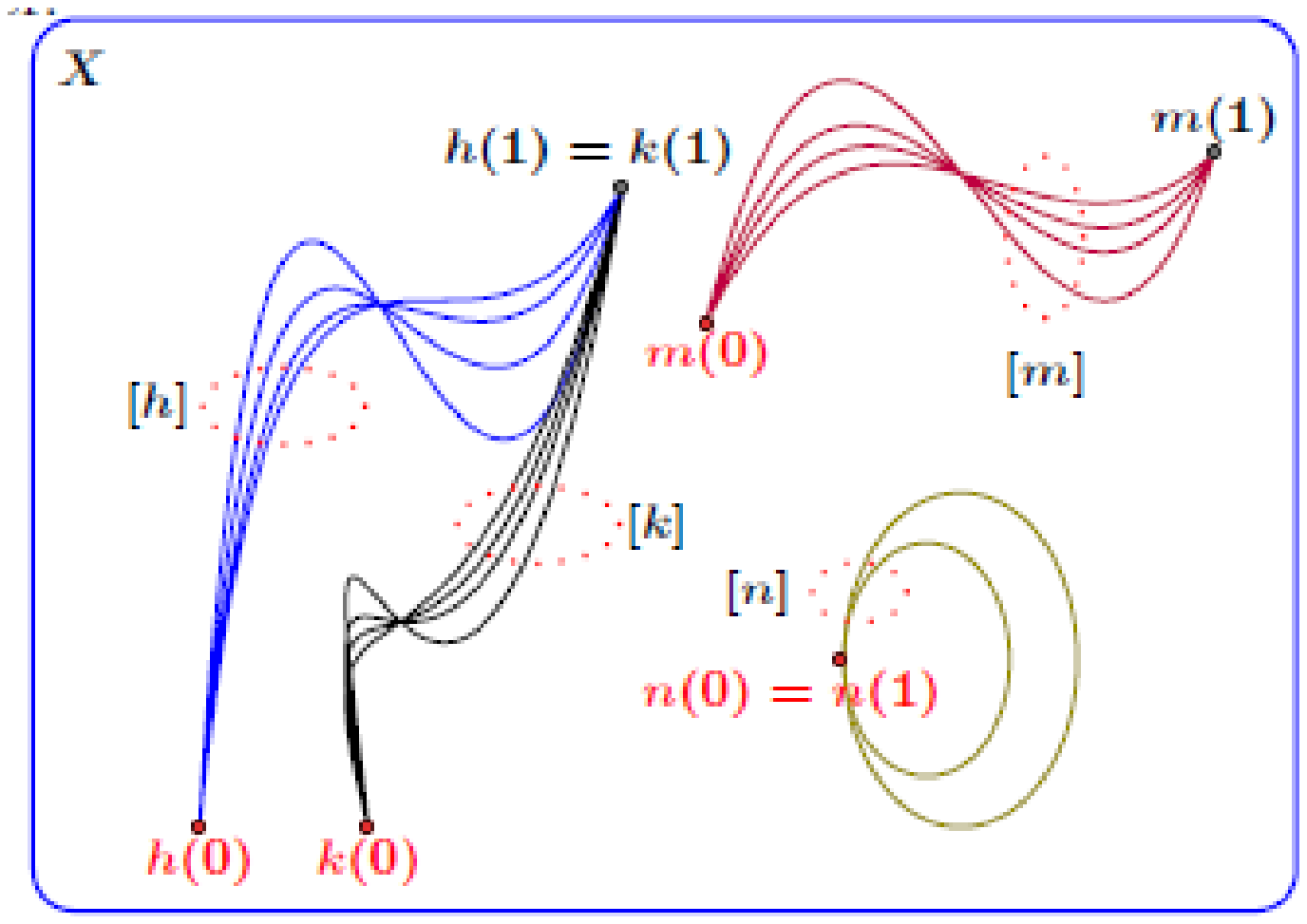}
	\caption[]{The identity map on $H(X)$ is degenerate descriptive constant.}
	\label{fig:example}
\end{figure}

\begin{definition} {\rm [{\bf Descriptive constant map}]} \\
	Let  $(X,\delta_{\Phi_1})$ and $(Y,\delta_{\Phi_2})$ be  descriptive proximity spaces. Then a map $d: X \to Y$ is said to be a  descriptive constant, provided,   $d(x)=y_0$ for all $x \in X$ and for some $y_0 \in Y$. 	$\mbox{\textcolor{blue}{\Squaresteel}}$
\end{definition}

\begin{definition} \label{def:desContractible}
	{\rm [{\bf Descriptively contractible space}]} $\mbox{}$ \\
	A descriptive proximity space is descriptive proximally contractible, or descriptively contractible for short,   provided, the identity map on it is descriptive proximally homotopic to a descriptive constant map.
	$\mbox{\textcolor{blue}{\Squaresteel}}$
\end{definition}

%}

\begin{definition}\label{def:degnerateDescrProxConstMap}
	{\rm [{\bf Degenerate Descriptive Constant Map}]} $\mbox{}$\\
	Let  $(X,\delta_{\Phi_1})$ and $(Y,\delta_{\Phi_2})$ be  descriptive proximity spaces. Then a map $d: X \to Y$ is said to be a  degenerate descriptive constant, provided  $\Phi_2(d(x_0))=\Phi_2(d(x_1))$ for all $x_0,x_1 \in X$. 
\end{definition}

From Def.~\ref{def:degnerateDescrProxConstMap}, observe that the degenerate descriptive constant map need not map every element to a fixed element, but instead it fixes the description. That is $\abs{\mathrm{im} \ d} \geq 1$ but $\abs{\Phi_2(d(X))}=1$ so that the image of $\Phi_2\circ d$ consists of a single element, say $\ast \in \mathbb{R}^n$  (see Figure~\ref{fig:degconst}). We say that $d$ is an ordinary descriptive constant map, provided   $\abs{\mathrm{im} \ d} = 1$.

\begin{figure}[!ht]
	\centering
	\begin{tikzcd}
		\centering
		X  \arrow[r, "d"] & Y \arrow[r, "\Phi_2"] & \{\ast\} \subset \mathbb{R}^n 
	\end{tikzcd}
	\caption{ $\Phi_2\circ d$ is a constant map on $X$, provided $d$ is  degenerate descriptive constant.}
	\label{fig:degconst}
\end{figure}

\begin{example}
	Let $H(X)$ denote the path homotopy classes in $X$ given in  Figure~\ref{fig:example} and  the paths in each of the homotopy class be described in terms of the color of their initial points. Then the identity map $id: (H(X), \Phi) \to (H(X),\Phi)$ is a degenerate descriptive constant map since the initial points of all  paths are red. 
\end{example}

\begin{theorem}
	A degenerate descriptive constant map is a dpc map.
\end{theorem}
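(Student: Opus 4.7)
The plan is to unpack both definitions and observe that the hypothesis $A \,\delta_{\Phi_1}\, B$ is essentially not needed: the degeneracy of $d$ already forces $\Phi_2(d(A))$ and $\Phi_2(d(B))$ to coincide as singletons, which is exactly what $\delta_{\Phi_2}$ detects.

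First I would fix an arbitrary pair $A,B\subset X$ with $A\,\delta_{\Phi_1}\, B$. Since $d$ is degenerate descriptive constant, there exists $\ast\in\mathbb{R}^n$ such that $\Phi_2(d(x))=\ast$ for every $x\in X$ (this is the content of Def.~\ref{def:degnerateDescrProxConstMap} together with Figure~\ref{fig:degconst}). In particular $\Phi_2(d(A))=\{\ast\}=\Phi_2(d(B))$, so
\[
\Phi_2(d(A))\cap \Phi_2(d(B))=\{\ast\}\neq\emptyset.
\]

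Next I would translate this intersection of descriptions into descriptive nearness. Pick any $a\in A$ and $b\in B$ (nonempty by $A\,\delta_{\Phi_1}\, B$). Setting $y=d(a)\in d(A)$ and $y'=d(b)\in d(B)$, we have $\Phi_2(y)=\ast=\Phi_2(y')$, so the singletons $\{y\}$ and $\{y'\}$ are descriptively near, i.e.\ $\{y\}\,\delta_{\Phi_2}\,\{y'\}$. By monotonicity of the descriptive proximity (Appendix~\ref{app:dnear}), $d(A)\,\delta_{\Phi_2}\,d(B)$. This is precisely the dpc condition of Def.~\ref{def:dpc}, so $d$ is dpc.

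There is no real obstacle; the argument is a one-line consequence of the definitions, and indeed the proof shows the slightly stronger statement that $d(A)\,\delta_{\Phi_2}\,d(B)$ holds for \emph{any} pair of nonempty subsets $A,B\subset X$, not merely those with $A\,\delta_{\Phi_1}\, B$. The only point worth flagging is the convention for $\delta_{\Phi_2}$ on singletons, which is handled by the characterization $\{y\}\,\delta_{\Phi_2}\,\{y'\}\Leftrightarrow \Phi_2(y)=\Phi_2(y')$ used throughout the paper.
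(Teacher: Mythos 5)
Your proposal is correct and follows essentially the same route as the paper's proof: both deduce from Def.~\ref{def:degnerateDescrProxConstMap} that $\Phi_2(d(A))=\Phi_2(d(B))$ and then conclude $d(A)\ \delta_{\Phi_2}\ d(B)$ via the descriptive nearness axiom ({\bf dP}.2). Your added observation that the hypothesis $A\ \delta_{\Phi_1}\ B$ is never used is accurate and is implicit in the paper's argument as well.
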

\begin{proof}
	For two subsets $A$ and $B$ of $X$, suppose that $A \ \delta_{\Phi_1} \ B$. From Def.~\ref{def:degnerateDescrProxConstMap} for a degenerate descriptive constant map, we have  $\Phi_2(d(A))=\Phi_2(d(B))$ so that $d(A) \ \delta_{\Phi_2} \ d(B)$, which completes the proof.
\end{proof}
%}

\begin{definition}\label{def:degdesContractible}	{\rm [{\bf Degenerate Descriptively Contractible Space}]} $\mbox{}$\\
	A descriptive proximity space is a degenerate descriptively  contractible,   provided, the identity map on it is descriptive proximally homotopic to a degenerate descriptive constant map.
\end{definition}
%}

%
%{\color{blue}
\begin{proposition} \label{prop:descont}
	Suppose that $(X,\dnear)$ is a descriptive proximity space and  $c_d$  is  a degenerate descriptive constant map  on $X$ with $x_0 \in \text{Im}(c_d)$.  Then $c_d$ and the descriptive constant map  $c_{x_0}$  at $x_0$  are descriptively homotopic. 
\end{proposition}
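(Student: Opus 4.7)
The plan is to exhibit an explicit descriptive proximal homotopy $H \colon X \times [0,1] \to X$ between $c_d$ and $c_{x_0}$. The structural observation driving the proof is that the joint image of $c_d$ and $c_{x_0}$ lies in a single ``description fibre.'' Since $c_d$ is degenerate descriptive constant (Def.~\ref{def:degnerateDescrProxConstMap}), there is a single feature vector $\Phi_{\ast}$ such that $\Phi(c_d(x)) = \Phi_{\ast}$ for every $x \in X$, so $\Phi(\mathrm{Im}(c_d)) = \{\Phi_{\ast}\}$. The hypothesis $x_0 \in \mathrm{Im}(c_d)$ then gives $\Phi(x_0) = \Phi_{\ast}$, and hence $\Phi(c_{x_0}(x)) = \Phi(x_0) = \Phi_{\ast}$ for all $x$. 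In particular, $\mathrm{Im}(c_d) \cup \{x_0\} \subseteq \Phi^{-1}(\Phi_{\ast})$.

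With this in hand, I would define $H \colon X \times [0,1] \to X$ piecewise by
\[
H(x, t) = \begin{cases} c_d(x), & t \in [0, \tfrac{1}{2}], \\ x_0, & t \in (\tfrac{1}{2}, 1]. \end{cases}
\]
The two cases are disjoint and exhaustive, so $H$ is well-defined, and by inspection $H(x, 0) = c_d(x)$ and $H(x, 1) = x_0 = c_{x_0}(x)$, giving the correct boundary behaviour for a descriptive proximal homotopy. To verify that $H$ is dpc, I would pick any subsets $C, D \subseteq X \times [0,1]$ with $C \ \dnear \ D$. By construction, $H(C), H(D) \subseteq \Phi^{-1}(\Phi_{\ast})$, so $\Phi_{\ast} \in \Phi(H(C)) \cap \Phi(H(D))$. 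Invoking the characterization of descriptive nearness (Appendix~\ref{app:dnear}), this yields $H(C) \ \dnear \ H(D)$, so $H$ is dpc and realizes $c_d \mathop{\sim}\limits_{\Phi} c_{x_0}$.

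The main obstacle I expect to confront is resisting the temptation to invoke the Descriptive Gluing Lemma (Theorem~\ref{thm:desglue}) against the closed cover $X \times [0, \tfrac{1}{2}]$ and $X \times [\tfrac{1}{2}, 1]$. That route fails because gluing requires pointwise equality on the overlap $X \times \{\tfrac{1}{2}\}$, whereas $c_d(x) \neq x_0$ as elements of $X$ in general — they agree only descriptively. The half-open split above avoids overlap altogether, and the dpc check then collapses to the description-fibre observation, so no nontrivial gluing is actually needed. The reflexivity and symmetry properties of $\mathop{\sim}\limits_{\Phi}$ (Proposition~\ref{thm:LproximalHomotopy}) play no role; only the construction of a single witness is required.
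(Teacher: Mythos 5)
Your proof is correct, and it takes a genuinely different --- and in fact more careful --- route than the paper's. The paper's entire proof is the single sentence that the desired homotopy is $H(x,t)=x_0$ for all $t$; that constant map satisfies $H(x,1)=c_{x_0}(x)$ but \emph{not} the other boundary condition $H(x,0)=c_d(x)$, since a degenerate descriptive constant map need not send every point to $x_0$ (only its description is fixed). So the paper's witness only works if one silently reads the equalities in the definition of descriptive proximal homotopy up to description, which is not what that definition says. Your piecewise map $H(x,t)=c_d(x)$ for $t\in[0,\tfrac12]$ and $H(x,t)=x_0$ for $t\in(\tfrac12,1]$ satisfies both boundary conditions on the nose, and your dpc verification is sound: since $x_0\in\mathrm{Im}(c_d)$ forces $\Phi(x_0)=\Phi_{\ast}$, the whole image of $H$ lies in the fibre $\Phi^{-1}(\Phi_{\ast})$, so for any nonempty $C,D$ the descriptive intersection $H(C)\ \dcap\ H(D)$ equals $H(C)\cup H(D)\neq\emptyset$ and axiom ({\bf dP}.2) gives $H(C)\ \dnear\ H(D)$ unconditionally --- no topological continuity across $t=\tfrac12$ is needed, and your remark that the Descriptive Gluing Lemma is unavailable here (the two pieces do not agree pointwise on an overlap) is exactly right. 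The only cosmetic improvement would be to cite ({\bf dP}.2) explicitly rather than the informal ``characterization of descriptive nearness'' in the appendix. In short, your argument repairs a genuine gap in the paper's one-line proof while proving the same statement.
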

\begin{proof}
	The desired homotopy  $H: X \times I \to X$ is a map such that $H(x,t)=x_0$. 
\end{proof}

Since descriptive proximal relation is transitive, we have the following corollary.

\begin{corollary} \label{cor:degenerate}
	A degenerate descriptively contractible space is also a descriptively contractible.
\end{corollary}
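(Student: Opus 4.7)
The plan is to chain together the two facts that have just been established: Proposition~\ref{prop:descont}, which says a degenerate descriptive constant map is descriptively homotopic to an ordinary descriptive constant map at any point in its image, and Proposition~\ref{thm:LproximalHomotopy}, which says that $\mathop{\sim}\limits_{\Phi}$ is an equivalence relation (in particular, transitive). The whole argument amounts to composing two homotopies.

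First, I would unpack the hypothesis via Definition~\ref{def:degdesContractible}: if $X$ is degenerate descriptively contractible, then there is a degenerate descriptive constant map $c_d : X \to X$ together with a descriptive proximal homotopy $F$ witnessing $\mathrm{id}_X \mathop{\sim}\limits_{\Phi} c_d$. Next, pick any $x_0 \in \mathrm{Im}(c_d)$ (such a point exists because $|\mathrm{Im}\, c_d| \geq 1$ by the discussion following Definition~\ref{def:degnerateDescrProxConstMap}). By Proposition~\ref{prop:descont}, the ordinary descriptive constant map $c_{x_0}$ at $x_0$ satisfies $c_d \mathop{\sim}\limits_{\Phi} c_{x_0}$.

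Finally, I would invoke transitivity of $\mathop{\sim}\limits_{\Phi}$ from Proposition~\ref{thm:LproximalHomotopy} on the two relations $\mathrm{id}_X \mathop{\sim}\limits_{\Phi} c_d$ and $c_d \mathop{\sim}\limits_{\Phi} c_{x_0}$ to conclude $\mathrm{id}_X \mathop{\sim}\limits_{\Phi} c_{x_0}$. Since $c_{x_0}$ is an ordinary descriptive constant map, Definition~\ref{def:desContractible} is satisfied and $X$ is descriptively contractible.

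There is really no technical obstacle here; the corollary is essentially a bookkeeping exercise once Proposition~\ref{prop:descont} and Proposition~\ref{thm:LproximalHomotopy} are in hand. The only subtlety worth flagging in the write-up is making sure the point $x_0$ is chosen inside $\mathrm{Im}(c_d)$ so that Proposition~\ref{prop:descont} applies verbatim; aside from that, the proof is a single application of transitivity of the descriptive proximal homotopy relation.
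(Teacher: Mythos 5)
Your proposal is correct and matches the paper's intended argument exactly: the paper justifies the corollary with the single remark that the descriptive proximal homotopy relation is transitive, implicitly chaining $\mathrm{id}_X \mathop{\sim}\limits_{\Phi} c_d$ (from Definition~\ref{def:degdesContractible}) with $c_d \mathop{\sim}\limits_{\Phi} c_{x_0}$ (from Proposition~\ref{prop:descont}) via Proposition~\ref{thm:LproximalHomotopy}. You have simply written out the bookkeeping that the paper leaves implicit.
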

%\vspace*{0.1cm}

\begin{figure}[!ht]
	\centering
	\includegraphics[width=75mm]{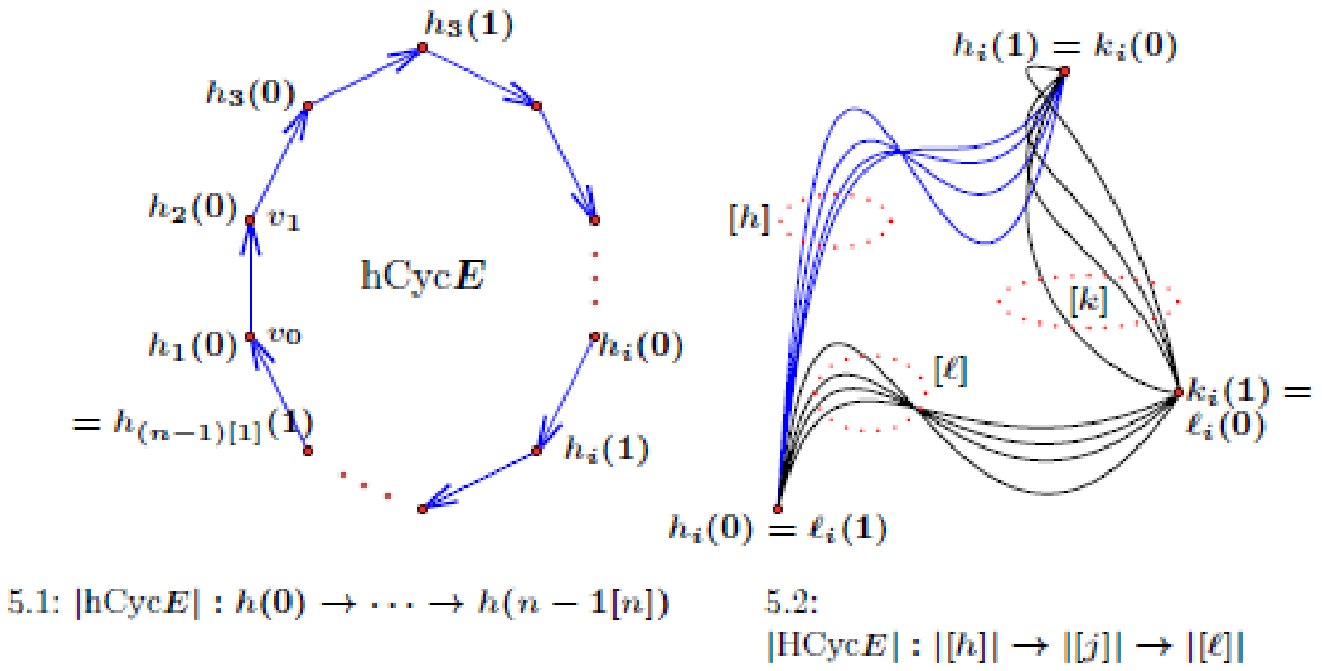}
	\caption[]{Two Forms of Homotopic cycles}
	\label{fig:homotopicCycles}
\end{figure}

\begin{figure}[!ht]
	\centering
	\includegraphics[width=75mm]{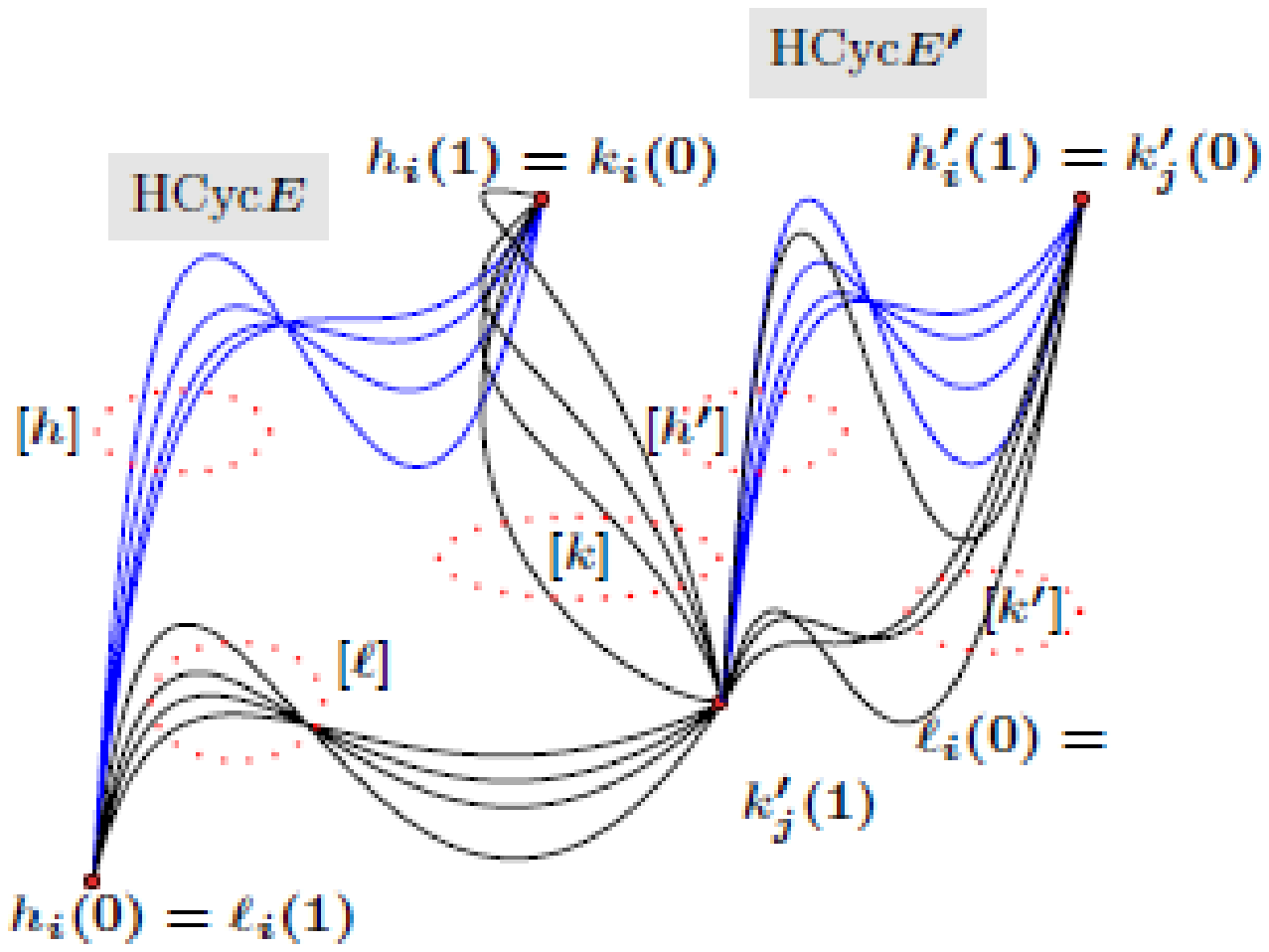}
	\caption[]{$\boldsymbol{\abs{\hSys E}}$, a homotopic cycle system.}
	\label{fig:HcNerve}
\end{figure}

%{\color{blue}
\section{Homotopic Cycles}
This section introduces three forms of homotopic cycles (briefly, {\bf path cycles}, namely, simple path cycles, multi-path cycles and path cycle systems. Geometrically, a path cycle has the appearance of the boundary of a Vigolo Hawaiian earring~\cite{Vigolo2018HawaiianEarrings}. These path cycles lead to extensions of the Jordan Curve Theorem.

Recall that a path in a space $X$ is a continuous map $h:I\to X$~\cite[\S 2.1,p.11]{Switzer2002CWcomplex}.

\begin{definition}\label{def:filledCycle} {\rm [{\bf Simple Path Cycle}] } $\mbox{}$\\
	In a space $X$ in the Euclidean plane, let $h:I\to X$ be a path (briefly, hpath). A path cycle $E$ (denoted by $\hcyc E$) is a collection of hpath-connected vertexes attached to each other with no end vertex.
	\qquad\textcolor{blue}{\Squaresteel}
\end{definition}

\begin{remark}
In general, path cycles have empty interiors.
However, in the case where a path cycle is constructed on the boundary of a physical shape recorded by a camera, the path cycle will have a nonvoid interior.  From a descriptive proximity perspective, the nonvoid interior is useful in cases where one or more features in a feature vector quantify distinuishing characteristics of parts of a shape interior such as centroidal vector field.  See an illustration, see Example~\ref{ex:featureVector} in Appendix~\ref{app:dnear}.
\qquad\textcolor{blue}{\Squaresteel}
\end{remark}

\begin{example}
	A geometric realization of a simple path cycle $\abs{\hcyc E}$ is shown in Fig.~\ref{fig:HcNerve}.  Each edge in $\abs{\hcyc E}$ is an hpath $\abs{h_i}, i\in [0,\dots,n-1[n]]$. 
	\qquad\textcolor{blue}{\Squaresteel}
\end{example}

An enriched form of a path cycle is derived from the paths in homotopic classes that provide path-connected cycle vertexes.

\begin{definition}\label{def:HCycle} {\rm  [Multi-Path Homotopic Cycle]} $\mbox{}$\\
	In a space $X$ in the Euclidean plane, let $[h]$ be a homotopic class containing multiple hpaths. A multi-path homotopic cycle $E$ (denoted by $\Hcyc E$) is a collection of homotopic classes containing hpaths-   connected vertexes attached to each other with no end vertex and $\Hcyc E$ has a nonvoid interior.
	\qquad\textcolor{blue}{\Squaresteel}
\end{definition}

\begin{example}
	A geometric realization of a multi-path homotopic cycle $\abs{\Hcyc E}$ is shown in Fig.~\ref{fig:HcNerve}.  There are multiple homotopic paths between each pair of vertexes in $\abs{\Hcyc E}$. For example, between vertexes $\abs{h_i(0)},\abs{h_i(1)}$, there are multiple h-paths in class $[h]$.
	\qquad\textcolor{blue}{\Squaresteel}
\end{example}

A system of path cycles results from a collection of $\Hcyc$-cycles that have nonvoid intersection.

For a space $X$ in the Euclidean plane, let $H(X)$ denote the set of all homotopy classes $[h]$ in $X$.

\begin{definition}\label{def:filledCycle2} {\rm  [Homotopic Cycle System]} $\mbox{}$\\
	In a space $X$ in the Euclidean plane, a homotopic cycle system $E$ (denoted by $\hSys E$) is a collection of $\Hcyc$-cycles such that
	\[
	\hSys E = \left\{\Hcyc E\in 2^{H(X)}    : \bigcap \Hcyc E = \mbox{vertex}\ v\in  H(X)  \right\}.\ \mbox{\qquad\textcolor{blue}{\Squaresteel}}
	\]
	%{\color{purple} Tane: I guess $\mbox{vertex}\ v$ should be an element in $\abs{\hSys E}$}.
\end{definition}
%\vspace*{0.1cm}

\begin{example}
	A geometric realization of a path cycle system $\abs{\hSys E}$ is shown in Fig.~\ref{fig:HcNerve}.  This system contains a pair of multi-path cycles $\Hcyc E$, $\Hcyc E'$ attached to each other, {\em i.e.}, we have
	\begin{align*}
	\ell_i &\in [\ell]\in \Hcyc E,\\
	k'_j &\in [k]\in \Hcyc E',\\
	\hSys E &= \{ \Hcyc E, \Hcyc E' \}\\
	\Hcyc E\cap\Hcyc E' &= \ell_i(0) = k'_j(1).
	\mbox{\qquad\textcolor{blue}{\Squaresteel}} 
	\end{align*}
\end{example}
%\vspace*{0.1cm}

\section{Good coverings and Jordan Curve Theorem extension}
This section introduces good coverings of descriptive proximity spaces and an extension of the Jordan Curve Theorem in terms of the boundary of a path cycle.
%\vspace*{0.1cm}

\begin{definition}\label{def:CycleClosure} {\rm [Closure in a Hausdorff metric space} ]. $\mbox{}$\\
	Let $A\in 2^X$ (nonvoid subset $A$ in a Hausdorff metric space~\cite{Hausdorff1914,Hausdorff1914b} $X$) and $D(x,A) = \inf\left\{\abs{x-a} :   a\in A  \right\}$ be the Hausdorff distance between a point $x\in X$ and subset $A$~\cite[\S 22,p. 128]{Hausdorff1914b}.  The closure of $A$~\cite[\S 1.18,p. 40]{DBLP:series/isrl/2014-63} is defined by
	\[
	\cl(A) = \left\{x\in X: D(x,A) = 0\right\}.\ \mbox{\qquad\textcolor{blue}{\Squaresteel}}
	\]
\end{definition}
%\vspace*{0.1cm}

\begin{definition}\label{def:closure} For a Hausdorff metric  space $X, A\in 2^X$, let $\cl A$ be the closure of $A$.  Then the boundary of $A$ (denoted by $\bdy A$) is the set of all points on the border of $\cl A$ and not in the complement of $\cl A$ (denoted by $\partial \cl A$).  Also, the interior of $A$ (denoted by $\Int A$) is the set of all points in $\cl A$ and not on the boundary of $A$, {\em i.e},
	\begin{align*}
	\partial(\cl A) &= X\setminus \cl A,\ \mbox{all points in $X$ and not in $\cl A$}.\\
	\Int(A) &= \left\{E\in 2^X: E\subset \cl A\ \mbox{and}\ E \cap \bdy A = \emptyset  \right\}.\\
	\bdy(A) &= X\setminus (\Int A\cup \partial \cl A).\\ 
	\cl A &= \bdy(A)\cup\Int(A).\ \mbox{\qquad\textcolor{blue}{\Squaresteel}}
	\end{align*}
\end{definition} 
%\vspace*{0.1cm}

\begin{remark}
	Geometrically, a path cycle system is a necklace. The clasp of the necklace is the vertex in the intersection of the system cycles. This is the case in Fig.~\ref{fig:HcNerve}.
	\qquad \textcolor{blue}{\Squaresteel}
\end{remark}
%\vspace*{0.1cm}

Recall that a {\bf cover} of a space $X$ is a collection of subsets $E\in 2^X$ such that $X=\bigcup E$~\cite[\S 15.9,p. 104
]{Willard1970}. 
%\vspace*{0.1cm}

\begin{definition}\label{def:goodCover}{\rm {\bf Good Cover}~\cite[\S 4,p. 12]{Tanaka2021TiAgoodCover}}.\\
	A cover of a space $X$ is a {\bf good cover}, provided, $X$ 
	%{\hcancel
	%{is}} \colorbox{yellow}{has?} 
	has a collection of subsets $E\in 2^X$ such that $X=\bigcup E$ and 
	%{\color{red} 
	$\mathop{\bigcap}\limits_{\mbox{finite}} 
	E\neq \emptyset$
	%} 
	is contractible,
	%{\color{red}
	{\em i.e.}, all nonvoid intersections of the finitely many subsets $E\in 2^X$  are  contractible.
	%???
	%}
	%{\color{purple} Do we require to say that all "finite" intersections of elements of E is contractible? }
	\qquad \textcolor{blue}{\Squaresteel}
\end{definition}
%\vspace*{0.1cm}

\begin{example}
	For a space $X$ in  the Euclidean plane, let $\hSys E = \left\{\Hcyc E,\Hcyc E'\right\}$ a system of path cycles in $X$ with nonempty intersection such that  a  geometric realization $\abs{\hSys E}$  is shown in Fig.~\ref{fig:HcNerve}.
	This is an example of planar Tanaka good cover of a $H(X)$, since  
	\begin{align*}
	H(X) &= \Hcyc E\cup\Hcyc E',\ \mbox{and}\\
	\Hcyc E\cap\Hcyc E' & = \ell_i(0).\ \mbox{\qquad \textcolor{blue}{\Squaresteel}} 
	\end{align*}
\end{example}
%}
%\vspace*{0.1cm}

%{\color{magenta} Tane, please consider introducing a good cover defined in terms of descriptive intersection $\dcap$.} \\

%{\color{purple} Tane: 
%Dear Professor James Peters,  According to the edited version of Definition~\ref{def:goodCover}, I've introduced a descriptively good cover below. }\\\\

%{\color{blue}
\begin{definition}\label{def:desgoodCover} 
	{\rm [Descriptively good cover]}$\mbox{}$ \\
	Let $X$ be a descriptive proximity space  with a probe function $\Phi: 2^X \to \mathbb{R}^n$. A descriptively good cover of $(X,\Phi)$ is a collection of subsets $E \in 2^X$  such that $X = \bigcup E$ and $\displaystyle \mathop{\bigcap}\limits_{ \Phi, \mbox{finite}}  E \neq \emptyset$, i.e., all nonvoid descriptive intersections of the finitely many subsets $E \in 2^X$ are descriptively contractible. \qquad \textcolor{blue}{\Squaresteel}
\end{definition}
%\vspace*{0.1cm}

\begin{definition}\label{def:degdesgoodCover} 
	{\rm [Degenerate Descriptively good cover]}\\
	Let $X$ be a descriptive proximity space  with a probe function $\Phi: 2^X \to \mathbb{R}^n$. A degenerate descriptively good cover of $(X,\Phi)$ is a collection of subsets $E \in 2^X$  such that $X = \bigcup E$ and $\displaystyle \mathop{\bigcap}\limits_{ \Phi, \mbox{finite}}  E \neq \emptyset$ is degenerate descriptively contractible, i.e., all nonvoid descriptive intersections of the finitely many subsets $E \in 2^X$ are  degenerate descriptively contractible. \qquad \textcolor{blue}{\Squaresteel}
\end{definition}
%\vspace*{0.1cm}
%}

%{\color{red}
%\begin{example}
%For a space $X$ in  the Euclidean plane, let $\hSys E = \left\{\Hcyc E,\Hcyc E'\right\}$ a system of homotopic cycles in $X$ with nonempty intersection such that  a  geometric realization $\abs{\hSys E}$  is shown in Fig.~\ref{fig:HcNerve}.
%This is an example of planar Tanaka good cover of a $H(X)$, since  
%\begin{align*}
%H(X) &= \Hcyc E\cup\Hcyc E',\ \mbox{and}\\
%\Hcyc E\cap\Hcyc E' & = \ell_i(0).\ \mbox{\qquad \textcolor{blue}{\Squaresteel}} 
%\end{align*}
%\end{example}
%%}
%\vspace*{0.1cm}

%{\color{purple} Dear Professor James Peters,
%I guess that the following theorem should be modified as follows.}\\

%{\color{red} 
\begin{proposition}
	For a space $X$ in the Euclidean plane, $\hSys E$ is a good cover of $H(X)$.
\end{proposition}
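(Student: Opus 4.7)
The plan is to verify directly the two defining conditions of a good cover from Definition~\ref{def:goodCover}, namely the covering condition $H(X)=\bigcup\hSys E$ and the contractibility of every nonvoid intersection of finitely many members of $\hSys E$.

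For the covering condition, I would argue directly from Definition~\ref{def:filledCycle2}: by construction $\hSys E$ is a subcollection of $2^{H(X)}$, so $\bigcup \hSys E \subseteq H(X)$, while every homotopy class in $H(X)$ appearing as a vertex or edge of some constituent $\Hcyc$-cycle (which is the ambient setting being considered for the necklace picture of Fig.~\ref{fig:HcNerve}) lies in some $\Hcyc E\in\hSys E$, giving the reverse inclusion.

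For the contractibility of finite intersections, the key structural feature is built into Definition~\ref{def:filledCycle2} itself: the defining equation
\[
\bigcap \Hcyc E \;=\; \mbox{vertex}\ v\in H(X)
\]
forces any subfamily of $\hSys E$ to meet either in the clasp vertex $v$ or in the empty set. Concretely, fix a finite nonempty subfamily $\{\Hcyc E_{i_1},\dots,\Hcyc E_{i_k}\}\subset \hSys E$ with nonvoid intersection. Since each $\Hcyc E_{i_j}$ contains $v$ (it is a cycle in the necklace hinged at the clasp), and since two distinct $\Hcyc$-cycles in the system share no other point than $v$, the intersection is exactly $\{v\}$. A one-point space is trivially contractible: the identity map is homotopic to the constant map at $v$ via the constant homotopy $H(v,t)=v$.

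The main obstacle, in my view, is not the final contractibility step (which is immediate once the intersection is identified as a single vertex), but rather the preceding set-theoretic verification that every pair---and hence every finite subfamily---of cycles in $\hSys E$ meets in at most the clasp vertex. This needs to be read off the geometric necklace picture encoded in Definition~\ref{def:filledCycle2} and illustrated in Fig.~\ref{fig:HcNerve}, rather than from any formal inequality, so the argument is essentially a careful unpacking of that definition. Once that is granted, the conclusion follows because $\bigcap_{\text{finite}} \Hcyc E_{i_j}\in\{\emptyset,\{v\}\}$, and the latter is contractible, establishing that $\hSys E$ is a good cover of $H(X)$.
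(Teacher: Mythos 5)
Your proof is correct and follows essentially the same route as the paper's: both verify that $H(X)=\bigcup\Hcyc E$ and that the intersection of the cycles in the system is the single clasp vertex, which is contractible. The only difference is that you spell out the case analysis for arbitrary finite subfamilies (intersection equal to $\emptyset$ or $\{v\}$), a detail the paper's one-line proof leaves implicit.
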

\begin{proof}
	Observe that each element $\Hcyc E$ in  $\hSys E$ is a subset of $H(X)$ and   by the nature of $\hSys E$,  $H(X)=\bigcup \Hcyc E$ and $\bigcap \Hcyc E$ is a single vertex so that it is contractible. 
\end{proof}
%}
%\vspace*{0.1cm}

\begin{theorem}\label{theorem:EHnerve}{\rm~\cite[\S III.2,p. 59]{Edelsbrunner1999}}.\\
	Let $F$ be a finite collection of closed, convex sets in Euclidean space.  Then the nerve of $F$ and union of the sets in $F$ have the same homotopy type.
\end{theorem}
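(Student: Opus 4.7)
The plan is to reduce the statement to the classical Nerve Lemma by exploiting convexity. First I would verify the good-cover hypothesis of Def.~\ref{def:goodCover}: if $F_{i_0},\dots,F_{i_k}\in F$ are convex and $F_{i_0}\cap\cdots\cap F_{i_k}\neq\emptyset$, then the intersection is itself convex, hence star-shaped about any interior point, hence contractible. Thus every nonempty finite intersection drawn from $F$ is contractible, which is precisely what is needed to invoke a Nerve Lemma style argument.

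Next I would construct an explicit comparison map. Write $|\Nrv F|$ for the geometric realization of the nerve, with one vertex $v_i$ per $F_i\in F$, and a $k$-simplex on $\{v_{i_0},\dots,v_{i_k}\}$ whenever $F_{i_0}\cap\cdots\cap F_{i_k}\neq\emptyset$. Since $F$ is finite and each $F_i$ is closed in $\bigcup F$, one obtains a partition of unity $\{\phi_i\}$ subordinate to the cover by mollifying characteristic functions of a slight Euclidean neighborhood and restricting. Define
\[
\rho:\bigcup F \to |\Nrv F|,\qquad \rho(x)=\sum_i \phi_i(x)\,v_i,
\]
which lands in the simplex spanned by those $v_i$ with $x\in F_i$, and is continuous by the gluing Lemma~\ref{thm:glue}.

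To exhibit a homotopy inverse I would introduce the homotopy colimit (``blow-up'') $B(F)$, defined as the subspace of $\bigl(\bigsqcup_i F_i\bigr)\times |\Nrv F|$ consisting of pairs $(x,y)$ with $y$ in the closed simplex indexed by some subfamily whose intersection contains $x$, modulo the obvious face identifications. There are canonical projections $p:B(F)\to\bigcup F$ and $q:B(F)\to|\Nrv F|$. The fiber $p^{-1}(x)$ is the realization of the nerve of the subcover $\{F_i:x\in F_i\}$, a full simplex, hence contractible; the fiber $q^{-1}(y)$ over an interior point of the simplex on $\{v_{i_0},\dots,v_{i_k}\}$ is $F_{i_0}\cap\cdots\cap F_{i_k}$, contractible by the first step. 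Hence both $p$ and $q$ should be homotopy equivalences, giving $\bigcup F\simeq B(F)\simeq |\Nrv F|$.

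The main obstacle will be upgrading the pointwise contractibility of the fibers of $p$ and $q$ to genuine homotopy equivalences of total spaces. The cleanest route is to invoke a general projection-along-contractible-fibers theorem (Segal's or Quillen's Theorem A in the appropriate form), but those are not developed in this paper; the self-contained alternative is to induct on $n=|F|$, decomposing $\bigcup_{i\le n}F_i=\bigl(\bigcup_{i<n}F_i\bigr)\cup F_n$ and applying Mayer-Vietoris together with the gluing Lemma~\ref{thm:glue} to the overlap $\bigl(\bigcup_{i<n}F_i\bigr)\cap F_n=\bigcup_{i<n}(F_i\cap F_n)$, which is itself a finite cover by convex sets to which the inductive hypothesis applies. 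The delicate bookkeeping in the inductive step is tracking how the newly adjoined vertex $v_n$ fills in higher-dimensional simplices of the nerve without disturbing the homotopy type already established for $\bigcup_{i<n}F_i$.
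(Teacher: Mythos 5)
The paper does not actually prove this statement: Theorem~\ref{theorem:EHnerve} is imported verbatim from Edelsbrunner--Harer with a citation and is used as a black box (in part 4$^o$ of Theorem~\ref{theorem:desGoodCover}), so there is no in-paper argument to compare yours against. Judged on its own, your proposal correctly identifies the one genuinely easy step --- nonempty intersections of convex sets are convex, hence contractible, so $F$ is a good cover in the sense of Def.~\ref{def:goodCover} --- and correctly names the two standard routes (partition of unity plus blow-up complex, or induction on $|F|$). You are also honest that the passage from ``all fibers of $p$ and $q$ are contractible'' to ``$p$ and $q$ are homotopy equivalences'' is the real content and requires machinery (Segal/Quillen) not available in this paper.

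Two steps as written would fail. First, the map $\rho(x)=\sum_i\phi_i(x)v_i$ does not land in $|\Nrv F|$ if the $\phi_i$ are built by mollifying neighborhoods of the \emph{closed} sets $F_i$: then $\phi_i(x)>0$ for some $x\notin F_i$, and $\rho(x)$ lies in the simplex spanned by $\{v_i:\phi_i(x)>0\}$, which need not be a simplex of the nerve. The usual repair --- thicken each $F_i$ to an open convex set and apply the open nerve lemma --- is itself delicate for closed convex sets, since two disjoint unbounded closed convex sets can be at distance zero, so every thickening creates a new nerve simplex; this is exactly why the closed convex case is stated separately in Edelsbrunner--Harer. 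Second, Mayer--Vietoris in the inductive step only yields isomorphisms on homology, not a homotopy equivalence; what the induction actually needs is the gluing theorem for homotopy equivalences over a union along closed cofibrations (convex sets in Euclidean space do satisfy the needed cofibration/NDR hypotheses, but that must be invoked explicitly). Note also that the paper's Lemma~\ref{thm:glue} cannot stand in for that tool: it asserts only that a map defined piecewise on two closed subsets is proximally continuous, and says nothing about gluing homotopy equivalences.
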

%\vspace*{0.1cm}

Let $\mathop{\angle}\limits^{~}_{\kappa}bac$ denote the inner angle of a geodesic triangle of length $\abs{ab},\abs{bc},\abs{ca}$, at the vertex with opposite side of length $\abs{bc}$, in a simply connected complete surface of curvature $\kappa$.

%{\color{blue} 
A geodesic complete metric space M is an {\bf Alexandrov space} (of curvature bounded locally from below) 
%???}
~\cite[\S 2.1,p. 3]{MitsuisheYamaguchi209goodCover}, provided, for each $p\in M$, there exist an $r>0$ and $\kappa\in\mathbb{R}$ such that for any distinct four points $a_i\in B(p,r), i = 1,2,3,4$ with $\mbox{max}_{1\leq i<j\leq 3}\left\{\abs{a_0a_i}+\abs{a_0a_j}+\abs{a_ia_j}\right\}<\frac{\pi}{\sqrt{\kappa}}$, if $\kappa > 0$,
we have
\[
\mathop{\sum}\limits_{1\leq i\leq j\leq 3} \mathop{\angle}\limits^{~}_{\kappa}a_ia_oa_j\leq 2\pi.
\]

%{\color{magenta} Please consider proving Prop.~\ref{prop:AlexandrovSpace}.
%}\\
%{\color{olive}  Dear Professor James Peters, I gues we should also consider the closeness of $X$ since the closed subset of a complete space is complete. Nevertheless,  I'm not sure about the correctness of the following proof.}\\

\begin{proposition}\label{prop:AlexandrovSpace}
	%{\color{blue} 
	For a closed subset $X$  in the Euclidean plane with a probe function $\Phi$,
	%} 
	the descriptive proximity space $\left(X,\dnear\right)$  is an Alexandrov space.
\end{proposition}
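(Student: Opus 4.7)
The plan is to interpret $(X,\delta_\Phi)$ as the closed Euclidean subset $X\subseteq\mathbb{R}^2$ carrying two compatible layers of structure: the Euclidean metric $d$ inherited from the ambient plane, which supplies the distances $\abs{a_ia_j}$ and geodesic triangles appearing in the Alexandrov four-point condition; and the descriptive proximity $\delta_\Phi$ induced by the probe $\Phi:2^X\to\mathbb{R}^n$. I will verify the Alexandrov axiom on the underlying metric and then argue that the descriptive layer is compatible with it, so that $(X,\delta_\Phi)$ inherits the Alexandrov property rather than merely the bare point set.

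First I would establish geodesic completeness. Since $X$ is closed in $\mathbb{R}^2$, any two points in a convex neighbourhood of $X$ are joined by a Euclidean segment lying in $X$; passing to the intrinsic length metric when necessary, the Hopf--Rinow argument together with closedness of $X$ yields local existence and completeness of geodesics. This endows $X$ with well-defined geodesic triangles, which is precisely what the definition of Alexandrov space demands of the ambient structure.

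Next I would take $\kappa=0$ globally. The model surface of constant curvature zero is $\mathbb{R}^2$ itself, into which $X$ embeds isometrically. Consequently, for any $p\in X$, any $r>0$, and any four distinct points $a_0,a_1,a_2,a_3\in B(p,r)\cap X$, each geodesic triangle $a_0a_ia_j$ coincides with its own Euclidean comparison triangle, so the angles $\mathop{\angle}\limits^{~}_{\kappa}a_ia_0a_j$ are precisely the planar angles at $a_0$. These three angles measure three sectors based at $a_0$ in the Euclidean plane, so their sum is bounded above by the full turn $2\pi$. The side-length constraint involving $\pi/\sqrt{\kappa}$ is vacuous when $\kappa=0$, which concludes the verification of the four-point inequality.

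Finally I would argue that $\delta_\Phi$ is compatible with, rather than in competition with, this metric Alexandrov structure. By the definition of $\delta_\Phi$ together with Corollary~\ref{cor:desclosed}, descriptive closeness is governed by equality of feature vectors under $\Phi$ on the same point set $X$ whose metric geometry has already been shown to satisfy the Alexandrov axiom; thus the descriptive proximity is a supplementary relation layered over the Alexandrov metric and does not alter the geodesic or curvature data. Consequently $(X,\delta_\Phi)$, viewed as a descriptive proximity space supported on this Alexandrov space, is itself an Alexandrov space. The main obstacle I expect is the case where $X$ is closed but not locally geodesically convex in the ambient plane: one must justify replacing the restricted metric by the intrinsic length metric while ensuring that the Euclidean comparison triangles still realize the required side lengths, and verify that the balls $B(p,r)$ used in the four-point condition can be chosen small enough to lie in a convex chart where the descriptive and Euclidean proximity data agree well enough that the proposition remains intact.
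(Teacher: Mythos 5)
Your route is genuinely different from the paper's, and in one respect stronger. The paper's proof fixes $\kappa=1$, takes the unit ball $B(p,1)$, places three points $a_1,a_2,a_3$ on its boundary, and checks that $\abs{pa_1}+\abs{pa_2}+\abs{pa_3}=3\leq\pi$ and that the three angles at $p$ sum to $2\pi$; that is, it verifies the four-point inequality for a single hand-picked configuration (whose points need not even lie in $X$). You instead take $\kappa=0$, observe that $X$ sits isometrically inside the flat model $\mathbb{R}^2$ so that comparison angles coincide with actual planar angles, and conclude that for \emph{every} admissible quadruple the three angles at $a_0$ sum to at most $2\pi$, with the side-length restriction vacuous. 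This actually discharges the universal quantifier in the definition of an Alexandrov space, which the paper's single-configuration check does not, so your verification of the curvature condition is the more faithful one. Your final paragraph about the compatibility of $\dnear$ with the metric layer plays no role in the paper's argument and adds nothing substantive to yours; it can be dropped.

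The weak point --- shared with the paper, but more visible in your write-up because you try to justify it --- is the geodesic hypothesis. The claim that two points of a closed $X\subseteq\mathbb{R}^2$ lying in a convex neighbourhood are joined by a Euclidean segment contained in $X$ is false in general (take $X=S^1$, or a two-point set), so ``geodesic complete metric space'' does not follow from closedness alone. Moreover, the repair you propose, passing to the intrinsic length metric, would in general destroy the isometric embedding into $\mathbb{R}^2$ on which your $\kappa=0$ comparison rests. You flag this obstacle honestly, and the paper's proof simply asserts completeness without confronting it, so this is a defect of the proposition as stated rather than of your argument alone; but to close your proof you would need either an added convexity or local geodesic-convexity hypothesis on $X$, or an argument that the four-point condition for the restricted (non-geodesic) metric is what is actually intended.
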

%{\color{blue}
\begin{proof}
	$X$ is complete since it is a closed subset of the Euclidean plane. For an element $p \in X$, consider the unit ball $B(p,1)$ and take the points $a_1, a_2, a_3$ on the boundary of $B(p,1)$ and let $\kappa=1$ 
	%{\color{purple} 
	(the curvature of $B(p,1)$, the reciprocal of the radius)
	%}. 
	Then $\abs{pa_1} + \abs{pa_2} + \abs{pa_3}=3 \leq \frac{\pi}{\sqrt{1}}$ and we have $\mathop{\angle}\limits^{~}_{\kappa}a_1pa_2 +  \mathop{\angle}\limits^{~}_{\kappa}a_1pa_3 + \mathop{\angle}\limits^{~}_{\kappa}a_2pa_3 = 2\pi$.
\end{proof}
%\vspace*{0.1cm}
%}

A main result in this paper is a extension of the Mitsuishi-Yamaguchi Theorem~\ref{theorem:Mitsuishi-Yamaguchi}.

\begin{theorem}\label{theorem:Mitsuishi-Yamaguchi}{\rm ~\cite[Theorem 1.1(2),p. 8108]{MitsuisheYamaguchi209goodCover}}.\\
	Every open covering $\gamma$ of an Alexandrov space $M$ has the same homotopy type as the nerve of any good covering of $M$.
\end{theorem}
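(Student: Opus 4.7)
The plan is to reduce the statement to the classical Nerve Lemma by showing that any good covering $\mathcal{U}$ of the Alexandrov space $M$ satisfies the hypotheses under which nerve-to-space homotopy equivalence is known, so that both $|N(\mathcal{U})|$ and the space underlying any open cover $\gamma$ share the homotopy type of $M$. Concretely, I would produce maps $\mu\colon M\to |N(\mathcal{U})|$ and $\nu\colon |N(\mathcal{U})|\to M$ and verify that $\mu\circ\nu\simeq \mathrm{id}_{|N(\mathcal{U})|}$ and $\nu\circ\mu\simeq \mathrm{id}_M$.

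First, I would harness the geometry of $M$. Since $M$ is a geodesically complete Alexandrov space with a local lower curvature bound, it is locally compact, separable (when finite-dimensional), and hence paracompact; in particular it admits a continuous partition of unity $\{\phi_U\}_{U\in\mathcal{U}}$ subordinate to $\mathcal{U}$. Moreover, the curvature bound supplies geodesic contractions on small balls and, more importantly, on any contractible finite intersection $U_{i_0}\cap\cdots\cap U_{i_k}$ guaranteed by the good-cover hypothesis of Definition~\ref{def:goodCover}. I would fix, once and for all, a choice of such contracting homotopies for every nonvoid finite intersection of members of $\mathcal{U}$.

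Second, I would build the maps. Define $\mu(x)=\sum_{U\in\mathcal{U}}\phi_U(x)\,v_U\in |N(\mathcal{U})|$, which is well defined because $\phi_U(x)\neq 0$ for $U\in \mathcal{U}$ only when $x\in U$, so the vertices $v_U$ with nonzero weight span a simplex of $N(\mathcal{U})$ by the intersection condition. Construct $\nu$ by induction on the skeleton of $|N(\mathcal{U})|$: on the $0$-skeleton pick any $\nu(v_U)\in U$; to extend across a $k$-simplex $\sigma=[v_{U_0},\dots,v_{U_k}]$, use that $\nu$ is already defined on $\partial\sigma$ with image in $U_{i_0}\cap\cdots\cap U_{i_k}$, and apply the chosen contraction of that intersection to fill $\sigma$. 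Conclude that $\nu\circ\mu$ is homotopic to $\mathrm{id}_M$ by a ``straight line'' homotopy in the barycentric coordinates performed piecewise inside each contractible intersection, and $\mu\circ\nu\simeq \mathrm{id}_{|N(\mathcal{U})|}$ by the standard barycentric homotopy on simplices. Finally, if $\gamma$ is any other open covering, the same construction yields $|N(\gamma)|\simeq M\simeq |N(\mathcal{U})|$ whenever $\gamma$ is subordinated or refined to a good cover; the general statement then follows by taking common refinements.

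The main obstacle is the inductive extension of $\nu$ across higher-dimensional simplices while keeping each extension inside the correct contractible intersection. This is where the Alexandrov hypothesis is indispensable: one needs not merely abstract contractibility of $U_{i_0}\cap\cdots\cap U_{i_k}$ but a coherent family of contractions that can be glued across simplices of the nerve, and the geodesic structure together with the curvature bound supplies canonical geodesic-based contractions with the required continuity. A secondary technical point is verifying paracompactness and the existence of subordinate partitions of unity in the possibly infinite-dimensional case, which I would handle by restricting first to the finite-dimensional strata of $M$ where the Perelman--Burago--Gromov stratification theory applies, and then assembling globally.
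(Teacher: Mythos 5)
The paper does not prove this statement at all: it is imported verbatim as Theorem 1.1(2) of Mitsuishi--Yamaguchi \cite{MitsuisheYamaguchi209goodCover} and used as a black box in the proof of Theorem~\ref{theorem:desGoodCover}. So there is no in-paper argument to compare against; what can be assessed is whether your sketch would stand on its own, and it has two genuine gaps.

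First, the inductive construction of $\nu\colon |N(\mathcal{U})|\to M$ breaks at the very first extension step. You assert that $\nu$ restricted to $\partial\sigma$ for $\sigma=[v_{U_0},\dots,v_{U_k}]$ already has image in $U_{0}\cap\cdots\cap U_{k}$, but the carrier assignment $\sigma\mapsto\bigcap_i U_i$ is \emph{order-reversing}: a face of $\sigma$ corresponds to a larger intersection, so the faces of $\sigma$ were filled inside sets that need not be contained in $\bigcap_i U_i$. Already for an edge $[v_{U_0},v_{U_1}]$ the endpoints $\nu(v_{U_0})\in U_0$ and $\nu(v_{U_1})\in U_1$ generally do not lie in $U_0\cap U_1$, so no contraction of $U_0\cap U_1$ can fill the edge. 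The standard repair is to work on the barycentric subdivision of the nerve (vertices are simplices $\sigma$ of $N(\mathcal{U})$, sent into $\bigcap_{U\in\sigma}U$, with the carrier of a chain $\sigma_0<\cdots<\sigma_k$ being $\bigcap_{U\in\sigma_k}U$) and invoke the contractible-carrier theorem, or to use the Mayer--Vietoris blowup/homotopy-colimit form of the nerve lemma. Relatedly, abstract contractibility of the intersections is all the good-cover hypothesis gives you; it does not supply the ``coherent family of geodesic contractions'' you lean on, and none is needed once the carrier argument is set up correctly.

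Second, the closing step for an arbitrary open covering $\gamma$ is unsound: passing to a common refinement does not preserve the homotopy type of the nerve unless the covers involved are good, and the nerve of a general open cover of $M$ need not have the homotopy type of $M$ (cover $S^1$ by two overlapping arcs whose intersection has two components: the nerve is an interval). The content of the cited theorem is precisely that Alexandrov geometry produces good coverings and controls how general open coverings relate to them; that is the part your sketch does not supply and cannot be recovered by formal refinement arguments.
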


%{\color{magenta} Please consider proving Prop.~\ref{prop:proxOpenCover}.
%}

\begin{proposition}\label{prop:proxOpenCover}
	Every descriptive proximity space $\left(X,\dnear\right)$ with a probe function $\Phi: 2^X \to \mathbb{R}^n$ in the Euclidean plane has an open covering.
\end{proposition}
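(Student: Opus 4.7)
The plan is to produce an open cover explicitly, using only the fact that $X$ sits inside the Euclidean plane $\mathbb{R}^2$ and therefore inherits the subspace topology from the standard Euclidean metric. The descriptive proximity relation $\dnear$ and the probe function $\Phi$ will play no essential role in the argument, since the conclusion is a purely topological existence claim; however, one should mention that they are compatible with the inherited topology so that the cover we produce is legitimately an open cover of the underlying point set of $\left(X,\dnear\right)$.

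First, for each point $x\in X$, fix some radius $r_x>0$ (any positive real will do; for definiteness one may take $r_x=1$). Form the open Euclidean ball
\[
B(x,r_x) \;=\; \left\{y\in \mathbb{R}^2 : \abs{x-y} < r_x\right\},
\]
which is open in $\mathbb{R}^2$, and set $U_x := B(x,r_x)\cap X$, which is open in the subspace topology on $X$. Then the family
\[
\gamma \;=\; \left\{U_x : x \in X\right\}
\]
is a collection of open subsets of $X$.

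Next, to verify that $\gamma$ covers $X$, one observes that for every $x\in X$ we have $x\in B(x,r_x)$ trivially, and $x\in X$ by assumption, so $x\in U_x$. Hence $X \subseteq \bigcup_{x\in X} U_x \subseteq X$, giving equality and showing that $\gamma$ is an open covering of $X$. I would close the proof by remarking that this construction is independent of any choice of probe $\Phi$: the Euclidean ambient space provides open sets regardless of the descriptive structure imposed on $X$, so every descriptive proximity space in the plane inherits at least one open cover.

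I do not foresee any real obstacle here; the only subtlety worth flagging is notational consistency, namely distinguishing between open sets in $\mathbb{R}^2$ and their intersections with $X$, so that the word \emph{open} in the conclusion is unambiguously interpreted as open in the subspace topology on $X$. If the author prefers an even more economical proof, the single-element cover $\gamma = \{X\}$ suffices when $X$ itself is viewed as an open set in its own subspace topology; but the ball construction above is preferable because it is the cover one actually wants to feed into the Mitsuishi--Yamaguchi theorem in the subsequent results.
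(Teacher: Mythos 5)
Your proof is correct, but it takes a genuinely different route from the paper's. You build the cover out of \emph{spatial} Euclidean balls $B(x,r_x)\cap X$ in $\mathbb{R}^2$, so the probe $\Phi$ plays no role and the word ``open'' refers to the subspace topology that $X$ inherits from the plane; this is the most economical way to establish the bare existence claim. The paper instead defines \emph{descriptive} $\varepsilon$-neighbourhoods $B_\Phi(x,\varepsilon)=\{\,y\in X : d(\Phi(x),\Phi(y))<\varepsilon\,\}$, where $d$ is the Euclidean distance in the feature space $\mathbb{R}^n$, verifies the usual triangle-inequality argument that these are open (in the pseudometric topology that $\Phi$ pulls back to $X$), and takes the family of all such neighbourhoods as the cover. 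The trade-off: your construction is simpler and independent of any descriptive structure, but the paper's cover is the one that is actually compatible with $\dnear$ --- its members are determined by closeness of descriptions rather than of locations --- which is what the authors want when this open covering is fed into the descriptive good-cover machinery of Theorem~\ref{theorem:desGoodCover}. (Your closing remark that the spatial-ball cover is ``the cover one actually wants'' for the Mitsuishi--Yamaguchi step therefore has it backwards relative to the paper's intent.) One further caveat worth noting for both proofs: whether the two notions of ``open'' coincide depends on the continuity of $\Phi$, which neither you nor the paper addresses.
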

%{\color{red} Dear Professor James Peters, Hopefully the following proof is correct. }
%{\color{blue}
\begin{proof}
	For $x \in X$ and positive number $\varepsilon>0$,  define the descriptive $\varepsilon$ neighborhoud of $x$ by letting $B_\Phi(x,\varepsilon)= \{ y\in X :  d(\Phi(x), \Phi(y))<\varepsilon     \}$ 
	where $d$ is a Euclidean distance on $\mathbb{R}^n$. Observe that  $B_\Phi(x,\varepsilon)$ is open in $X$, since, for an element $y\in X$, we have $B_\Phi(y,r)\subseteq B_\Phi(x,\varepsilon)$, where $r= \varepsilon - d(\Phi(x), \Phi(y))$. Then the collection of open sets  $\{B_\Phi(x,\varepsilon) : x \in X, \varepsilon>0\}$ is an open covering of $X$. 
\end{proof}
%}
%\vspace*{0.1cm}

%{\color{magenta} Please check Theorem~\ref{theorem:desGoodCover}.
%}{\color{olive} Dear Professor James Peters, The theorem and its proof is OK.}

\begin{theorem}\label{theorem:desGoodCover}
	Let $X$ be a descriptive proximity space in the Euclidean plane with an open covering and with a probe function $\Phi: 2^X \to \mathbb{R}^n$.  Also, let $H(X)$ be the collection of all homotopy classes covering space $X$ and $X = H(X)$.
	\begin{compactenum}[1$^o$]
		\item If nerve of $E\in 2^{H(X)}$ in space $X$ is descriptively contractible, then $X$ has a descriptively good cover.
		\item If nerve of $E\in 2^{H(X)}$ in space $X$ is degenerate descriptively contractible, then $X$ has a descriptively good cover.
		\item If $H(X)$ is an Alexandrov space with an open covering, then the nerve of $H(X)$ and the union of sets in $H(X)$ have the same homotopy type. 
		\item If $H(X)$ is a finite collection of closed, convex sets in Euclidean space.  Then the nerve of $H(X)$ and union of the sets in $H(X)$ have the same homotopy type.
	\end{compactenum}
\end{theorem}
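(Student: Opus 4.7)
The plan is to treat each of the four items separately, since each reduces either to a definition or to an invoked theorem already recorded in the paper, with the bulk of the work concentrated in item $1^o$.

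For item $1^o$, I would begin by unpacking the hypothesis that the nerve of $E\in 2^{H(X)}$ is descriptively contractible. Each simplex of the nerve corresponds, by construction, to a nonvoid descriptive intersection of finitely many members of $E$, so a descriptive proximal homotopy contracting the nerve to a (degenerate or ordinary) descriptive constant map restricts face-by-face to descriptive proximal homotopies between the identity map and a descriptive constant map on each such finite intersection. Combined with the covering hypothesis $X = H(X) = \bigcup E$, this is precisely the data required by Definition~\ref{def:desgoodCover}, so $E$ witnesses that $X$ has a descriptively good cover.

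For item $2^o$, I would appeal to Corollary~\ref{cor:degenerate}, which states that every degenerate descriptively contractible space is descriptively contractible. Thus the hypothesis of item $2^o$ implies the hypothesis of item $1^o$, and the desired conclusion follows immediately. For item $3^o$, the strategy is a direct application of the Mitsuishi-Yamaguchi Theorem~\ref{theorem:Mitsuishi-Yamaguchi}: the hypothesis that $H(X)$ is an Alexandrov space with an open cover (whose existence in the descriptive planar setting is guaranteed by Proposition~\ref{prop:proxOpenCover}), together with the good cover of $H(X)$ supplied by item $1^o$, verifies the hypotheses of that theorem and yields the homotopy equivalence between the nerve of $H(X)$ and $\bigcup H(X)$. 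For item $4^o$, I would invoke Theorem~\ref{theorem:EHnerve} directly: if $H(X)$ is a finite collection of closed, convex sets in Euclidean space, the hypotheses of that Edelsbrunner-style nerve theorem are met and the conclusion, that the nerve of $H(X)$ and the union of its members share the same homotopy type, is immediate.

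The main obstacle I anticipate is the bridge required in item $1^o$ between descriptive contractibility of the nerve, viewed as a simplicial object with its own descriptive proximity, and descriptive contractibility of the intersections, viewed as subsets of $X$. Assembling the face-wise contractions into a genuine descriptive proximal homotopy on each intersection requires combining the descriptive gluing Theorem~\ref{thm:desglue} with the equivalence-relation Proposition~\ref{thm:LproximalHomotopy}, so that the piecewise-defined contractions on adjacent faces agree on their common subfaces and patch together into the descriptive proximal homotopies demanded by Definition~\ref{def:desContractible}. Everything else in the argument is bookkeeping against the definitions and prior lemmas.
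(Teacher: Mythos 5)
Your treatment of items $2^o$--$4^o$ matches the paper's: the paper likewise disposes of $3^o$ by citing Proposition~\ref{prop:AlexandrovSpace} and Theorem~\ref{theorem:Mitsuishi-Yamaguchi}, and of $4^o$ by citing Theorem~\ref{theorem:EHnerve}. (For $2^o$ the paper verifies Definition~\ref{def:degdesgoodCover} directly and concludes that $H(X)$ is a \emph{degenerate} descriptively good cover, whereas you reduce to $1^o$ via Corollary~\ref{cor:degenerate}; your route actually lands on the stated conclusion --- a descriptively good cover --- more cleanly.)

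The problem is your item $1^o$. The step you flag as the ``main obstacle'' is a genuine gap, and the gluing Theorem~\ref{thm:desglue} will not close it. A simplex of the nerve records only that a finite descriptive intersection is \emph{nonempty}; it carries no information about the homotopy type of that intersection as a subset of $X$. A descriptive proximal homotopy contracting the nerve is a map defined on the nerve, which is a different space from $X$, and there is no map along which it ``restricts'' face-by-face to a homotopy on the corresponding finite intersection inside $X$. Nor does the implication hold even at the level of homotopy types: for the cover of a circle by two overlapping arcs, the nerve is a single edge (contractible) while the intersection of the two arcs is disconnected, hence not contractible --- which is precisely why the good-cover definition must \emph{assume} contractibility of the intersections rather than derive it from the nerve. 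For what it is worth, the paper's own proof of $1^o$ does not attempt your bridge at all: it simply asserts that all nonvoid descriptive intersections of finitely many subsets $E\in 2^{H(X)}$ are descriptively contractible and then invokes Definition~\ref{def:desgoodCover}; that is, it reads the hypothesis as already supplying the contractibility of the intersections. Either adopt that reading, in which case $1^o$ is immediate from the definition as in the paper, or accept that the implication you are trying to prove is false in general.
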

\begin{proof}
	1$^o$: For $E\in 2^{H(X)}$ in $\left(X,\dnear\right)$, we have $X = \bigcup E$, since $X = H(X)$.  We also know that all nonvoid descriptive intersections of finitely many subsets $E\in 2^{H(X)}$ are descriptively contractible.  Hence, from Def.~\ref{def:desgoodCover}, $H(X)$ is a descriptively good cover of $X$.\\
	2$^o$: For $E\in 2^{H(X)}$ in $\left(X,\dnear\right)$, we have $X = \bigcup E$, since $X = H(X)$.  We also know that all nonvoid descriptive intersections of finitely many subsets $E\in 2^{H(X)}$ are degenerate descriptively contractible.  Hence, from Def.~\ref{def:degdesgoodCover}, $H(X)$ is a degenerate descriptively good cover of $X$.\\
	3$^o$: From Prop.~\ref{prop:AlexandrovSpace}, $X$ is an Alexandrov space. If $X$ has an open covering, then from Theorem~\ref{theorem:Mitsuishi-Yamaguchi}, the desired result follows.\\
	4$^o$: If $H(X)$ is a finite collection of closed, convex sets, then the desired result follows from Theorem~\ref{theorem:EHnerve}.
\end{proof}

A another main result in this paper is a fivefold extension of the Jordan curve theorem.

\begin{theorem}\label{thm:JordanCurveTheorem} {\rm [Jordan Curve Theorem~\cite{Jordan1893coursAnalyse}]}.\\
	A simple closed curve lying on the plane divides the  
	plane into two regions and forms their common boundary.
\end{theorem}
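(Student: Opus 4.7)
The plan is to prove the theorem by first establishing it for polygonal simple closed curves and then extending to arbitrary simple closed curves via uniform approximation. For a polygonal simple closed curve $P$, I would define, for each point $x \in \mathbb{R}^2 \setminus P$, the parity of the number of intersections of a horizontal ray from $x$ with $P$ (choosing the ray to avoid vertices and edges lying along the ray), and call $x$ an \emph{interior} point if the parity is odd and an \emph{exterior} point if the parity is even. The key claims in the polygonal case are: (i) the parity is independent of the chosen generic ray; (ii) the set of interior points is nonempty and open; (iii) the set of exterior points is nonempty (it contains the unbounded component) and open; (iv) both sets are connected; and (v) every point of $P$ is a limit of both interior and exterior points.

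For claims (i)--(v) in the polygonal case, I would argue by induction on the number of edges of $P$, removing a suitable ``ear'' to reduce to smaller polygons, as in the standard combinatorial proofs. Once the polygonal case is in hand, for a general simple closed curve $C$ I would exploit compactness of the image $C([0,1])$ to construct a sequence of polygonal simple closed curves $P_n$ converging uniformly to $C$, with each $P_n$ eventually lying in an $\varepsilon$-neighbourhood of $C$ that deformation-retracts onto $C$. A limiting argument then transfers the decomposition $\mathbb{R}^2 \setminus P_n = I_n \cup O_n$ into a decomposition $\mathbb{R}^2 \setminus C = I \cup O$ with $C = \bdy I = \bdy O$.

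The main obstacle is ensuring that the polygonal approximations $P_n$ are themselves simple (no self-intersections) and that the interior/exterior regions converge in a controlled way, so that the resulting interior and exterior of $C$ are genuinely connected, nonempty, and have $C$ as common boundary. This is the classical delicate point in Jordan's theorem: passing from simple polygons to arbitrary continuous simple closed curves requires either this careful uniform approximation or a replacement argument via the Brouwer degree, the winding number, or Alexander duality.

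As a more conceptual alternative consistent with the machinery developed in this paper, one can view $C$ as a simple path cycle in the sense of Definition~\ref{def:filledCycle}, cover a tubular neighbourhood of $C$ by finitely many convex patches via Proposition~\ref{prop:proxOpenCover}, and apply Theorem~\ref{theorem:desGoodCover} to conclude that this neighbourhood has the homotopy type of the nerve of the cover, which is homotopy equivalent to $S^1$. Alexander duality in $S^2 = \mathbb{R}^2 \cup \{\infty\}$ then yields $H_0(S^2 \setminus C) \cong \mathbb{Z}^2$, producing exactly two complementary regions with $C$ as their common boundary, which is the desired conclusion.
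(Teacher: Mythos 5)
First, a point of comparison: the paper does not prove Theorem~\ref{thm:JordanCurveTheorem} at all. It is stated as a classical result attributed to Jordan and used as a black box in the proof of Theorem~\ref{theorem:proximalJordan}. So there is no in-paper argument to measure your proposal against; the only question is whether your sketch would stand on its own as a proof.

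As written it would not. The polygonal-case-plus-uniform-approximation route is the correct classical strategy, but everything in your first two paragraphs is a statement of intent: claims (i)--(v) and the ear-removal induction are asserted rather than carried out, and the ``limiting argument'' transferring the decomposition from the $P_n$ to $C$ is precisely the delicate point you yourself flag --- guaranteeing simplicity of the approximants and connectedness of the limiting regions $I$ and $O$ is where every elementary proof does its real work, and you give no mechanism for that control. The ``conceptual alternative'' also does not go through with the paper's machinery: Proposition~\ref{prop:proxOpenCover} produces descriptive $\varepsilon$-neighbourhoods, not convex patches, and Theorem~\ref{theorem:desGoodCover} concerns descriptive contractibility of collections of homotopy classes, not the classical nerve theorem for a convex cover of a tubular neighbourhood. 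Moreover, even granting $H_0(S^2\setminus C)\cong\mathbb{Z}^2$ from Alexander duality, that yields only the two-component statement; the claim that $C$ is the \emph{common boundary} of both components does not follow from the component count and requires the separate non-separation argument (an arc does not disconnect $S^2$), which your sketch omits. If you intend the duality route, import Alexander duality explicitly and supply the boundary step; if you intend the elementary route, the inductive and limiting steps must actually be written out.
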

%\vspace*{0.1cm}

%{\color{magenta} Please prove parts 4 and 5 Theorem~\ref{theorem:proximalJordan}.
%}

\begin{theorem}\label{theorem:proximalJordan}
	Let $\hcyc E$ (simple homotopic cycle), $\Hcyc E$ (multi-homotopic cycle), $\hSys E$ (path cycle system) be in the Euclidean plane. Then
	\begin{compactenum}[1$^o$]
		\item The boundary $\bdy(\cl(\hcyc E))$ satisfies the Jordan Curve Theorem. 
		\item The boundary $\bdy(\cl(\Hcyc E))$ satisfies the Jordan Curve Theorem.
		\item The boundary $\bdy(\cl(\hSys E))$ satisfies the Jordan Curve Theorem.
		\item If $X=H(X)$ in $\left(X,\dnear\right)$ has a descriptively good cover, then $\bdy(\cl_\Phi(H(X)))$  satisfies the Jordan Curve Theorem.
		\item If $X=H(X)$ in $\left(X,\dnear\right)$ has a degenerate descriptively good cover, then $\bdy(\cl_\Phi(H(X)))$ satisfies the Jordan Curve Theorem.
	\end{compactenum}
\end{theorem}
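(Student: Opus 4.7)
The plan is to reduce each of the five assertions to the classical Jordan Curve Theorem (Theorem~\ref{thm:JordanCurveTheorem}) by exhibiting the boundary in question as, or as a disjoint union of, simple closed curves in the Euclidean plane.

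For item 1$^o$, I would unfold Def.~\ref{def:filledCycle}: $\hcyc E$ is a sequence $\{h_i\}_{i=0}^{n-1}$ of paths with $h_i(1)=h_{(i+1)\bmod n}(0)$ and no end path. Since the introduction stipulates that paths lie in a simply-connected planar surface and have no self-loops, the concatenation $\bigcup_i h_i(I)$ is a continuous injective image of the circle, i.e.\ a simple closed curve, and by Def.~\ref{def:closure} this concatenation equals $\bdy(\cl(\hcyc E))$, so the classical JCT applies verbatim. For item 2$^o$ I would use that by Def.~\ref{def:HCycle} the cycle $\Hcyc E$ has nonvoid interior and cyclically arranged vertexes; picking the outermost representative $h_i\in[h_i]$ between consecutive vertexes yields, by the argument of 1$^o$, a simple closed curve that is exactly $\bdy(\cl(\Hcyc E))$. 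For item 3$^o$, invoking Def.~\ref{def:filledCycle2}, the system $\hSys E$ is a union of $\Hcyc$-cycles meeting at a single clasp vertex $v\in H(X)$; applying item 2$^o$ to each constituent produces a bouquet of simple closed curves, and the JCT will be applied componentwise.

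For items 4$^o$ and 5$^o$, the hypothesis gives $X=H(X)$ equipped with a (degenerate) descriptively good cover. By Theorem~\ref{theorem:desGoodCover} the nerve of this cover shares the homotopy type of $\bigcup E$, and each element of $H(X)$ admits a geometric representative of one of the forms handled in 1$^o$--3$^o$. The descriptive closure $\cl_\Phi(H(X))$ therefore has a boundary $\bdy(\cl_\Phi(H(X)))$ that is homotopy-equivalent, via the probe $\Phi$ and the good-cover nerve equivalence, to a disjoint union of Jordan curves in the plane, so the classical JCT transports back through the equivalence, proving 4$^o$, and the same argument with Def.~\ref{def:degdesgoodCover} replacing Def.~\ref{def:desgoodCover} proves 5$^o$.

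The main obstacle is item 3$^o$: a bouquet of two circles joined at a single point divides the plane into \emph{three} regions, not two, so a naive union of the constituent cycles does not literally satisfy the classical JCT. The proof must therefore read ``satisfies the Jordan Curve Theorem'' componentwise, applying the classical theorem to each $\Hcyc$-cycle of the system separately; this is consistent with the necklace geometry suggested by Fig.~\ref{fig:HcNerve}, where the clasp vertex is traversed twice in any closed walk and so is not part of the separating simple curve of either component. A parallel delicacy arises in items 4$^o$--5$^o$: the homotopy equivalence supplied by the good-cover nerve is not a homeomorphism of the plane, so the separating-curve assertion must be interpreted up to the equivalence provided by Theorem~\ref{theorem:desGoodCover}, rather than as a pointwise topological separation of $\mathbb{R}^2$.
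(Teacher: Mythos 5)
Your overall strategy for 1$^o$--3$^o$ coincides with the paper's: exhibit the boundary as a simple closed curve and invoke the classical Theorem~\ref{thm:JordanCurveTheorem}. For 1$^o$ and 2$^o$ you simply make explicit what the paper asserts in one line. The real divergence is at 3$^o$: the paper claims that the constituent cycles of $\hSys E$, attached at a single clasp vertex, concatenate into ``an elongated curve that is both simple and closed,'' and applies the JCT to that single curve. You correctly identify that this is false --- a wedge of two circles is not a simple closed curve (the clasp vertex is visited twice) and separates the plane into three regions --- and you repair the statement by reading it componentwise. This is a genuine improvement over the paper's argument, which as written does not go through.

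For 4$^o$--5$^o$ you take a genuinely different route from the paper, and yours has a gap the paper's does not. The paper argues directly: an element $E$ of a descriptively good cover is descriptively contractible, which it asserts is equivalent to $E$ containing a simple closed sequence of paths, so the cover is a path cycle system $\hSys E$ and 3$^o$ applies; 5$^o$ then reduces to 4$^o$ via Corollary~\ref{cor:degenerate}. (That ``equivalence'' between contractibility and carrying a simple closed curve is itself unargued, but it is the paper's stated bridge.) You instead route through the nerve equivalence of Theorem~\ref{theorem:desGoodCover} and propose to ``transport'' the JCT across a homotopy equivalence. That step fails: the Jordan Curve Theorem is a statement about point-set separation of $\mathbb{R}^2$, which is not a homotopy-invariant property, and a homotopy equivalence between the nerve and $\bigcup E$ gives no control over $\bdy(\cl_\Phi(H(X)))$ as a subset of the plane. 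Your own caveat that the conclusion must be read ``up to the equivalence'' concedes that you are proving a weaker statement than the one asserted. To match the theorem as stated you should instead follow the paper's reduction: show (or assume, as the paper effectively does) that each cover element realizes a multi-path cycle, so that the cover is an $\hSys E$, and then cite your componentwise 3$^o$.
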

\begin{proof}
	1$^o$: The boundary $\bdy(\cl(\hcyc E))$ is a sequence of paths on a curve that is simple (no loops) and closed (the sequence begins and ends with the same vertex).  Hence, by Theorem~\ref{thm:JordanCurveTheorem}, $\bdy(\cl(\hcyc E))$ divides the  
	plane into two regions and forms their common boundary.\\
	
	2$^o$: Replace $\bdy(\cl(\hcyc E))$ in 1$^o$ with $\bdy(\cl(\Hcyc E))$ and the proof is symmetric with the proof of 1$^o$.\\
	
	3$^o$: Replace $\bdy(\cl(\hcyc E))$ in 1$^o$ with $\bdy(\cl(\hSys E))$ and observe that curve on each boundary $\Hcyc E\in\hSys E$ is a simple, closed curve attached to the other homotopic cycle boundaries by a single vertex.  Then curve on the boundary continues along the curves of the other cycle boundaries, forming an elongated curve that is both simple and closed. Hence, the boundary $\bdy(\cl(\hSys E))$ satisfies Theorem~\ref{thm:JordanCurveTheorem}\\
	
	4$^o$: Observe that if $E \in 2^{H(X)}$  is an element in a descriptively good covering of $H(X)$,   then it is descriptively contractible. This is equivalent to saying that $E$ contains a sequence of paths on a curve that is simple and closed so that it constitutes a multi-path cycle E, namely, $\Hcyc E$, and the descriptively good covering is also a path cycle system, $\hSys E$. Then the proof follows from 3$^o$. \\
	
	5$^o$: Observe that if $E \in 2^{H(X)}$  is an element in a degenerate descriptively good covering of $H(X)$,  then it is degenerate descriptively contractible and hence it is descriptively contractible by Corollary~\ref{cor:degenerate}. Then the proof follows from  4$^o$.
\end{proof}
%\vspace*{0.1cm}

\begin{figure}[!ht]
	\centering
	\subfigure[Vigolo Hawaiian butterfly $\boldsymbol{\Hb E_{t.00}}$ in video frame space $\boldsymbol{\left(frE,\dnear\right)}$ at time $\boldsymbol{t}$ at the beginning of a temporal interval $\boldsymbol{[t,t+05 sec]}$, Betti no. $\boldsymbol{\beta(\Hb E_{t.00}) = 3}$, $\boldsymbol{\Hb E_{t.00}\ \ \dnear\ \Hb E_{t.01}}$]
	{\label{fig:shEBoundary}
		\begin{pspicture}
		%[showgrid=true]
		(-1.5,-0.5)(4.0,4.0)
		\psframe[linewidth=0.75pt,linearc=0.25,cornersize=absolute,linecolor=blue](-1.45,-0.25)(3.8,3)
		\rput(-0.6,2.7){\footnotesize $\boldsymbol{\left(frE,\dnear\right)}$}
		% left butterfly wing
		\psline*[linestyle=solid,linecolor=green!30]%
		(0,0)(1,1)(0,2)(-1,1.5)(-1,0.5)(0,0)
		%(0.5,1)
		% left butterfly boundary
		\psline[linestyle=solid,linecolor=black]%
		(0,0.0)(1,1)(0,2)(-1,1.5)(-1,0.5)(0,0)
		%(0.5,1) 
		% Left Hb inner boundary
		\psline*[linestyle=solid,linecolor=orange!50]%
		(0,0.5)(1,1)(-.55,1.25)(-.55,0.75)(0,0.5)
		\psline[linestyle=solid,linecolor=black]%
		(0,0.5)(1,1)(-.55,1.25)(-.55,0.75)(0,0.5)
		\psdots[dotstyle=o,dotsize=2.2pt,linewidth=1.2pt,linecolor=black,fillcolor=yellow!80]%
		(0,0.5)(1,1)(-.55,1.25)(-.55,0.75)(0,0.5)
		(0,2)(-1,1.5)(-1,0.5)(0,0) 
		%(0.5,1)
		% Right butterfly
		\psline*[linestyle=solid,linecolor=green!30]%
		(1,1)(2.0,2.0)(3.0,1.5)(3.0,0.5)(2.0,0.0)(1,1)
		%(1.5,2)
		% Right butterfly wing outer boundary
		\psline[linestyle=solid,linecolor=black]%
		(1,1)(2.0,0.0)(3.0,0.5)(3.0,1.5)(2.0,2.0)(1,1)
		\psline*[linestyle=solid,linecolor=orange!50]%
		(1,1)(2.55,1.25)(2.55,0.75)(2.0,0.5)(1,1)
		%(1,1)(2.0,0.5)(2.5,0.5)(2.5,0.75)(2.0,1.35)(1,1)
		\psline[linestyle=solid,linecolor=black]%
		(1,1)(2.55,1.25)(2.55,0.75)(2.0,0.5)(1,1)
		%(1,1)(2.0,0.5)(2.5,0.5)(2.5,0.75)(2.0,1.35)(1,1)
		% left Hb ribbon vertexes
		\psdots[dotstyle=o,dotsize=2.2pt,linewidth=1.2pt,linecolor=black,fillcolor=yellow!80]%
		(1,1)(2.0,0.0)(3.0,0.5)(3.0,1.5)(2.0,2.0)
		(2.55,1.25)(2.55,0.75)(2.0,0.5)
		%(2.5,0.5)(2.5,0.75)(2,0.5)(2.0,1.35)
		% butterfly body
		\psline*[linestyle=solid,linecolor=black!80]%
		(1,1)(0.9,0.8)(0.9,0.5)(1.0,0.3)(1.1,0.5)(1.1,0.8)(1,1)
		\psdots[dotstyle=o,dotsize=2.2pt,linewidth=1.2pt,linecolor=black,fillcolor=yellow!80]%
		(1,1)(0.9,0.8)(0.9,0.5)(1.0,0.3)(1.1,0.5)(1.1,0.8)(1,1)
		\psline[linestyle=solid,linecolor=blue,border=1pt]{*-}(0.8,1.5)(1,1)
		\psline[linestyle=solid,linecolor=blue,border=1pt]{*-}(1.2,1.5)(1,1)
		% antenna connection
		\psline[linestyle=solid,linecolor=blue,border=1pt]{*-}(0.8,1.5)(1.2,1.5)
		\psdots[dotstyle=o,dotsize=3.5pt,linewidth=1.2pt,linecolor=black,fillcolor=red!80]% shared boundary vertex
		(0.8,1.5)(1.2,1.5)(1.0,1)
		%(1.5,1)
		\rput(1.0,2.1){\footnotesize $\boldsymbol{\Hb E_{t.00}}$}
		\rput(-0.8,1.85){\footnotesize $\boldsymbol{cyc Ha}$}
		\rput(-0.25,1.5){\footnotesize $\boldsymbol{cyc Hb}$}
		\rput(2.85,1.85){\footnotesize $\boldsymbol{cyc Ha'}$}
		\rput(2.25,1.5){\footnotesize $\boldsymbol{cyc Hb'}$}
		%\rput(1.4,1.0){\footnotesize $\boldsymbol{v_0}$}
		\rput(1.35,0.95){\footnotesize $\boldsymbol{v_0}$}
		\rput(0.7,1.7){\footnotesize $\boldsymbol{v_1}$}
		\rput(1.3,1.7){\footnotesize $\boldsymbol{v_2}$}
		\end{pspicture}}\hfil
	\subfigure[Vigolo Hawaiian butterfly $\boldsymbol{\Hb E_{t.01}}$ in video frame space $\boldsymbol{\left(frE',\dnear\right)}$ at time $\boldsymbol{t+0.1 sec}$ in temporal interval $\boldsymbol{[t,t+05 sec]}$, Betti no. $\boldsymbol{\beta(\Hb E_{t.01}) = 3}$, $\boldsymbol{\Hb E_{t.00}\ \dnear\ \Hb E_{t.01}}$]
	{\label{fig:shEBoundary2}
		\begin{pspicture}
		%[showgrid=true]
		(-1.5,-0.5)(4.0,4.0)
		\psframe[linewidth=0.75pt,linearc=0.25,cornersize=absolute,linecolor=blue](-1.55,-0.25)(3.8,3)
		\rput(-0.3,2.7){\footnotesize $\boldsymbol{\left(frE',\dnear\right)}$}
		% left butterfly wing
		\psline*[linestyle=solid,linecolor=green!10]%
		(0,0)(1,1)(0,2)(-1,1.5)(-1,0.5)(0,0)
		%(0.5,1)
		% left butterfly boundary
		\psline[linestyle=solid,linecolor=black]%
		(0,0.0)(1,1)(0,2)(-1,1.5)(-1,0.5)(0,0)
		%(0.5,1) 
		% Left Hb inner boundary
		\psline*[linestyle=solid,linecolor=orange!20]%
		(0,0.5)(1,1)(-.55,1.25)(-.55,0.75)(0,0.5)
		\psline[linestyle=solid,linecolor=black]%
		(0,0.5)(1,1)(-.55,1.25)(-.55,0.75)(0,0.5)
		\psdots[dotstyle=o,dotsize=2.2pt,linewidth=1.2pt,linecolor=black,fillcolor=yellow!80]%
		(0,0.5)(1,1)(-.55,1.25)(-.55,0.75)(0,0.5)
		(0,2)(-1,1.5)(-1,0.5)(0,0) 
		%(0.5,1)
		% Right butterfly
		\psline*[linestyle=solid,linecolor=green!10]%
		(1,1)(2.0,2.0)(3.0,1.5)(3.0,0.5)(2.0,0.0)(1,1)
		%(1.5,2)
		% Right butterfly wing outer boundary
		\psline[linestyle=solid,linecolor=black]%
		(1,1)(2.0,0.0)(3.0,0.5)(3.0,1.5)(2.0,2.0)(1,1)
		\psline*[linestyle=solid,linecolor=orange!20]%
		(1,1)(2.55,1.25)(2.55,0.75)(2.0,0.5)(1,1)
		%(1,1)(2.0,0.5)(2.5,0.5)(2.5,0.75)(2.0,1.35)(1,1)
		\psline[linestyle=solid,linecolor=black]%
		(1,1)(2.55,1.25)(2.55,0.75)(2.0,0.5)(1,1)
		%(1,1)(2.0,0.5)(2.5,0.5)(2.5,0.75)(2.0,1.35)(1,1)
		% left Hb ribbon vertexes
		\psdots[dotstyle=o,dotsize=2.2pt,linewidth=1.2pt,linecolor=black,fillcolor=yellow!80]%
		(1,1)(2.0,0.0)(3.0,0.5)(3.0,1.5)(2.0,2.0)
		(2.55,1.25)(2.55,0.75)(2.0,0.5)
		%(2.5,0.5)(2.5,0.75)(2,0.5)(2.0,1.35)
		% butterfly body
		\psline*[linestyle=solid,linecolor=black!50]%
		(1,1)(0.9,0.8)(0.9,0.5)(1.0,0.3)(1.1,0.5)(1.1,0.8)(1,1)
		\psdots[dotstyle=o,dotsize=2.2pt,linewidth=1.2pt,linecolor=black,fillcolor=yellow!80]%
		(1,1)(0.9,0.8)(0.9,0.5)(1.0,0.3)(1.1,0.5)(1.1,0.8)(1,1)
		\psline[linestyle=solid,linecolor=blue,border=1pt]{*-}(0.8,1.5)(1,1)
		\psline[linestyle=solid,linecolor=blue,border=1pt]{*-}(1.2,1.5)(1,1)
		% antenna connection
		\psline[linestyle=solid,linecolor=blue,border=1pt]{*-}(0.8,1.5)(1.2,1.5)
		\psdots[dotstyle=o,dotsize=3.5pt,linewidth=1.2pt,linecolor=black,fillcolor=red!80]% shared boundary vertex
		(0.8,1.5)(1.2,1.5)(1.0,1)
		%(1.5,1)
		\rput(1.0,2.1){\footnotesize $\boldsymbol{\Hb E_{t.1}}$}
		\rput(-0.8,1.85){\footnotesize $\boldsymbol{cyc Ha}$}
		\rput(-0.25,1.5){\footnotesize $\boldsymbol{cyc Hb}$}
		\rput(2.85,1.85){\footnotesize $\boldsymbol{cyc Ha'}$}
		\rput(2.25,1.5){\footnotesize $\boldsymbol{cyc Hb'}$}
		%\rput(1.4,1.0){\footnotesize $\boldsymbol{v_0}$}
		\rput(1.35,0.95){\footnotesize $\boldsymbol{v_0}$}
		\rput(0.7,1.7){\footnotesize $\boldsymbol{v_1}$}
		\rput(1.3,1.7){\footnotesize $\boldsymbol{v_2}$}
		\end{pspicture}}\hfil
	\caption[]{Persistent butterfly shapes~\cite{Peters2021temporalProximity} in a pair of video frame descriptive proximity spaces}
	\label{fig:frames2}
\end{figure}

\section{Application}
This section briefly introduces an application of descriptively proximal nerves in a topology of data approach to detecting close good covers of video frame shapes that appear, disappear and sometimes reappear in a sequence of video frames.  The basic approach is to track the persistence of descriptively proximal video frame shapes that have homotopic nerve presentations. 
%\vspace*{0.1cm}

\begin{definition}{\rm [{\bf Path Nerve Presentation}]} $\mbox{}$\\
	Let $\mathcal{B} = \left\{g_1,...\right\}$ be the basis for a free group $G$. Also let $H(X) = \left\{\hcyc E\right\}$, a  collection of path cycles $\hcyc E$ with nonvoid intersection in a planar space $X$.  A {\bf path nerve presentation} is a continuous mapping
	\begin{align*}
	f:H(X)&\to
	\displaystyle
	G\left(v=\sum_{\substack{
			k\in\mathbb{Z}\\ 
			g\in\mathcal{B}}
	}kg: v\in\hcyc E\right)\\
	&\to G(\mathcal{B},+),
	\end{align*}
	from $H(X)$ to a corresponding free group $G$.
\end{definition}
%\vspace*{0.1cm}

\begin{theorem}{\rm ~\cite{Peters2021KievConf}}\label{theorem:hcycPresentation}
	Every homotopic cycle in a CW space has a free group presentation.
\end{theorem}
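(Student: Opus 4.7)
The plan is to realize a homotopic cycle as a one-dimensional subcomplex (a graph) of the ambient CW space and then invoke the classical fact that the fundamental group of a connected graph is free. Since a free group comes with a canonical presentation (generators and no relations), this will yield the desired free group presentation.

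First, I would fix a simple path cycle $\hcyc E=\{h_0,\dots,h_{n-1}\}$ with $h_i(1)=h_{(i+1)\bmod n}(0)$ and endow its image with the structure of a $1$-dimensional CW complex $K$: the $0$-cells are the vertices $v_i=h_i(0)$ and the $1$-cells are the open arcs corresponding to the $h_i$, attached to $K^0$ via the pair $(h_i(0),h_i(1))$. Because the ambient space is a CW space, $K$ inherits a subcomplex structure, so we may work with $K$ topologically. The same construction handles the multi-path case $\Hcyc E$ (choose one representative per homotopy class of edges, with parallel $1$-cells between the same pair of $0$-cells) and the path cycle system $\hSys E$ (the underlying graph is the wedge of the constituent cycle graphs at the common clasp vertex from Def.~\ref{def:filledCycle2}).

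Next, I would choose a maximal spanning tree $T\subset K$. Since $T$ is contractible and a CW subcomplex, the quotient map $q:K\to K/T$ is a homotopy equivalence, and $K/T$ is a wedge of circles, one for each $1$-cell of $K$ not contained in $T$. If $K$ has $V$ vertices and $N$ edges, this gives a wedge of $m=N-V+1$ circles. By the standard computation (via van Kampen or covering space theory), $\pi_1(K/T)$ is the free group $F_m$ on these $m$ generators. Pulling back through the homotopy equivalence, $\pi_1(\hcyc E)\cong \pi_1(K)\cong F_m$, so $\hcyc E$ has the free presentation $\langle g_1,\dots,g_m\mid\ \rangle$, with one generator $g_j$ for each non-tree edge.

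The argument transfers directly to $\Hcyc E$ and $\hSys E$, since in each case the underlying space is still a connected $1$-dimensional CW complex, and the same spanning-tree recipe applies. The main obstacle, such as it is, lies not in the algebra but in checking that the path cycle, as a subspace of the given CW space, admits a compatible CW subcomplex structure so that the homotopy equivalence $K\simeq K/T$ is valid; once that is in place, everything reduces to the classical theorem that the fundamental group of a graph is free, and the assignment $g_j\leftrightarrow $ (non-tree edge) is the required presentation.
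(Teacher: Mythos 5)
The paper does not actually prove this statement: it is quoted from \cite{Peters2021KievConf} and used as an imported result, so there is no internal proof to compare yours against. On its own terms, your argument is the classical one --- model the cycle as a connected $1$-dimensional CW complex, collapse a spanning tree, and read off $\pi_1$ as a free group with one generator per non-tree edge --- and it is sound, including the honest flag that a path cycle sitting inside an arbitrary CW space need not be a subcomplex; you sidestep that correctly by building $K$ from the combinatorial data of the cycle rather than from the ambient cell structure. Two small caveats. First, for a simple path cycle $\hcyc E$ your recipe gives $N=V$, hence $m=1$ and $\pi_1\cong\mathbb{Z}$; that is consistent but worth stating explicitly, since the theorem only has content beyond a single circle for $\Hcyc E$ and $\hSys E$, where $m>1$. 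Second, the surrounding material in the paper (the ``path nerve presentation'' written additively as $G(\mathcal{B},+)$, and the identification of the Betti number with the number of generators) suggests the intended object may be the free abelian group $H_1$ rather than the fundamental group; your computation delivers the nonabelian free group $F_m$, whose abelianization is $\mathbb{Z}^m$, so the rank --- and hence the Betti number used in the application section --- is unaffected, but you should state which of the two groups you are presenting.
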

%\vspace*{0.1cm}

Recall that a Betti number is a count of the number generators in a free group~\cite[\S 4,p. 24]{Munkres1984}.
%\vspace*{0.1cm}

\begin{theorem}{\rm ~\cite{Peters2021temporalProximity}}\label{theorem:nested hCycPresentation}
	Every free group presentation of nested 1-cycles nerve has a Betti number.  
\end{theorem}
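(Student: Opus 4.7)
The plan is to reduce the statement to a straightforward counting argument by first invoking Theorem~\ref{theorem:hcycPresentation} to guarantee that each constituent $1$-cycle admits a free group presentation, and then showing that nesting preserves the ability to exhibit a finite basis. First I would unpack the notion of a nested $1$-cycles nerve: if $\hcyc E_1,\dots,\hcyc E_m$ are the $1$-cycles building the nerve, nesting means each $\hcyc E_{i+1}$ lies in (or attaches to) the region bounded by $\hcyc E_i$, with the incidences of the nerve recording which cycles share a vertex. Because this construction takes place in a CW space, Theorem~\ref{theorem:hcycPresentation} applies to each $\hcyc E_i$ and yields a free group $G_i = G(\mathcal{B}_i,+)$ with a distinguished basis $\mathcal{B}_i = \{g_1^{(i)},\dots\}$ obtained from the path nerve presentation $f$.

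Next I would assemble these presentations into a single presentation of the whole nested nerve. The key observation is that the path nerve presentation $f:H(X)\to G(\mathcal{B},+)$ is already defined as a map into a free group, so the image of the whole nested system sits inside the free product (equivalently, the free group on the disjoint union $\mathcal{B} = \bigsqcup_i \mathcal{B}_i$ after quotienting only by the vertex identifications forced by the nesting). Since vertex identifications in a $1$-dimensional CW nerve do not introduce relators beyond those already encoded in the individual $\mathcal{B}_i$ (they only identify endpoints of generators), the resulting group remains free on a basis $\mathcal{B}$ obtained from $\bigsqcup_i \mathcal{B}_i$ by these identifications.

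Having produced the free presentation $G(\mathcal{B},+)$, the Betti number is then, by the definition recalled just before the theorem, simply $\lvert\mathcal{B}\rvert$, i.e., the count of generators. Thus existence of a Betti number follows as soon as $\mathcal{B}$ is a well-defined set, which it is by construction. The last step is to observe finiteness: since each $\hcyc E_i$ is a finite sequence of paths (Def.~\ref{def:filledCycle}) and the nested nerve is built from finitely many such cycles, $\mathcal{B}$ is finite and the Betti number $\beta = \lvert\mathcal{B}\rvert$ is a nonnegative integer.

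The main obstacle I anticipate is justifying that the nesting identifications really do not introduce nontrivial relators that would collapse generators and force me to work with an abelianization or quotient rather than a genuinely free group. To handle this, I would appeal to the $1$-dimensionality of the nerve: a $1$-dimensional CW complex has the homotopy type of a wedge of circles, so its fundamental group is free, and nesting only affects how wedge points are distributed, not the freeness of the resulting group. Once freeness is secured, counting the generators gives the Betti number and completes the proof.
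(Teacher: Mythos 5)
The paper does not actually prove this statement: Theorem~\ref{theorem:nested hCycPresentation} is imported by citation from an earlier paper of the authors and appears here with no proof environment at all, so there is no in-paper argument to compare yours against. Judged on its own terms, your proposal is reasonable and, if anything, proves more than the statement literally asks. As phrased, the theorem quantifies over free group presentations that are already assumed to exist (``every free group presentation of \ldots''), and the definition recalled immediately before it declares a Betti number to be the count of generators of a free group; so the conclusion reduces to the observation that a free group presentation comes with a basis whose cardinality can be counted, plus the finiteness remark you make at the end. Your first two paragraphs --- invoking Theorem~\ref{theorem:hcycPresentation} for each constituent cycle and arguing that the vertex identifications of a nested, $1$-dimensional nerve do not introduce relators because such a complex has the homotopy type of a wedge of circles --- are really a sketch of why the presentation exists in the first place, which is the content of the other imported result, not of this one. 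That extra work is not wrong, and the wedge-of-circles argument is the standard and correct way to dispose of the relator worry, but be aware you are supplying a proof of the hypothesis rather than of the stated conclusion.

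The one point you should not treat as automatic is finiteness. You justify it by saying each cycle is a finite sequence of paths and the nerve is built from finitely many cycles; that is consistent with the paper's definitions (Def.~\ref{def:filledCycle} and Def.~\ref{def:filledCycle2} are finite combinatorial objects), but the paper never states a finiteness hypothesis in the theorem itself, and the geometric motivation it cites (Hawaiian-earring-like configurations) is exactly the setting where infinitely many nested cycles can occur and the fundamental group fails to be free. If you want your argument to be airtight rather than conditional, you should either state the finiteness of the collection of cycles as an explicit hypothesis or restrict to the CW setting of Theorem~\ref{theorem:hcycPresentation}, where the cell structure forces local finiteness and the free basis is genuinely countable.
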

%\vspace*{0.1cm}

The result from Theorem~\ref{theorem:nested hCycPresentation} provides a stepping stone to tracking the persistence of good covers of video frame shapes.  A frame shape persists, provided it continues to appear over a sequence of consecutive video frames. 
%\vspace*{0.1cm}

\begin{example}
	A pair of descriptively contractible nerves in two video frames, each identified with a descriptive proximity space, is shown in Fig.~\ref{fig:frames2}. For each frame $X$, let the descriptive proximity space $X = H(X)$.  From Theorem~\ref{theorem:desGoodCover}.1$^o$, each frame has a descriptively good cover. In that case, from Theorem~\ref{theorem:proximalJordan}.4$^o$, the $\bdy(\cl_\Phi(H(X)))$  satisfies the Jordan Curve Theorem.
	
	In this example, a nerve is a collection of time-constrained Hawaiian butterfly path cycles (denoted by $\Hb E_{t}$ at time $t$) with nonvoid intersection  such as those in Fig.~\ref{fig:frames2}. Let $\dnear$ be defined in terms of the Betti number of the free groups derived from each nerve, {\em i.e.}, 
	\[
	\Phi(\Hb E_{t}) = \mathcal{B}(\Hb E_{t})
	\]
	Since the free group presentations $\hcyc$ cycles of the Hawaiian butterflies in Fig.~\ref{fig:frames2} have the same Betti number, namely,
	\[
	\mathcal{B}(\Hb E_{t.00})=\mathcal{B}(\Hb E_{t.1}) = 3,
	\]
	then we have
	\[
	\Hb E_{t.00}\ \dnear\ \Hb E_{t.1}.
	\]
	Hence, the persistence of a particular butterfly over a sequence of video frames can be tracked in terms of its Betti number.  In this example, the butterfly represented in Fig.~\ref{fig:frames2} persists for a 10th of a second.  
	\qquad\textcolor{blue}{\Squaresteel}
\end{example}
%\vspace*{0.1cm}

The motivation for considering free group presentations of polytopes (e.g., nested cycles with nonvoid intersection) covering frame shapes is that we can then describe frame shapes in terms of their Betti numbers.

Frame shapes are approximately descriptively close, provided the difference between the Betti numbers of the free group presentations of the corresponding homotopic nerves, is close.  Determining the persistence of frame shapes then reduces to tracking the appearance, disappearance and possible reappearance of the shapes in terms of their recurring Betti numbers.  For an implementation of this approach to tracking the persistence of polytopes covering brain activation regions in resting state (rs)-fMRI videos, see~\cite{Peters2017fMRInerveStructures}.

\begin{appendix}	
	\section{\v{C}ech Proximity}\label{ap:Cech}
	A nonempty set $X$  equipped with the relation $\delta$ is a  \v{C}ech proximity space (denoted by  $(X,\near$))~\cite[\S 2.5,p 439]{Cech1966}, provided  provided the following axioms are satisfied.\\
	
	\noindent {\bf \v{C}ech Axioms}
	
	\begin{description}
		\item[({\bf P}.0)] All nonempty subsets in $X$ are far from the empty set, {\em i.e.}, $A\ \not{\near}\ \emptyset$ for all $A\subseteq X$.
		\item[({\bf P}.1)] $A\ \near\ B \Rightarrow B\ \near\ A$.
		\item[({\bf P}.2)] $A\ \cap\ B\neq \emptyset \Rightarrow A\ \near\ B$.
		\item[({\bf P}.3)] $A\ \near\ \left(B\cup C\right) \Rightarrow A\ \near\ B$ or $A\ \near\ C$.
	\end{description}
	%\vspace*{0.1cm}

	%{\color{cyan} If you agree, we can delete the Appendix~\ref{ap:CechLodato}.  }
	
	%\section{\v{C}ech-Lodato Proximity}\label{ap:CechLodato}
	%A \v{C}ech  proximity space  $(X,\delta_L)$ is a \v{C}ech-Lodato proximity space, provided $\delta$ satisfies the Lodato proximity axiom \cite{Lodato1962}.
	%
	%\begin{description}
	%\item[({\bf P}.5)]  $ A  \ \delta_L \ B$   and $ \{b\} \ \delta_L \ C$     for each $  b \in B$   $ \Rightarrow A \ \delta_L \ C$.    
	%\end{description}
	%\vspace*{0.1cm}

	The closure of a subset $A$, denoted by $\cl A$, of the proximity space $X$ is the set of all points in $X$ which are near $A$:
	\[  
	\cl A =\{  x\in X \  : \  x\ \delta_L \ A \}.
	\]
	Note that $A$ is closed, provided $\cl A=A$. \\
	
	\begin{lemma}{\rm ~\cite[p. 9]{Smirnov1952}}
		The closure of any nonempty set $E$ in a proximity space $X$ is the set of all points which are close to $E$.
	\end{lemma}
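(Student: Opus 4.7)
The plan is to establish the set equality $\cl E = \{x \in X : x\ \delta\ E\}$ by proving both inclusions, working directly from the \v{C}ech axioms (P.0)--(P.3) and the definition of closure given just above the lemma.

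For the forward inclusion $\cl E \subseteq \{x \in X : x\ \delta\ E\}$, I would take an arbitrary $x \in \cl E$. Since the excerpt defines $\cl E = \{x \in X : x\ \delta_L\ E\}$, the point $x$ is by hypothesis close to $E$, which is exactly the membership criterion on the right-hand side. For the reverse inclusion, I would take $x \in X$ with $x\ \delta\ E$, equivalently $\{x\}\ \delta\ E$, and conclude $x \in \cl E$ by the same definitional identity. So the bulk of the lemma is essentially an unpacking of the definition.

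What makes this more than a tautology is that one must verify the assignment $E \mapsto \{x : x\ \delta\ E\}$ actually behaves as a closure operator in the proximity space, so that the name ``closure'' is justified. Here is where the \v{C}ech axioms do the work. Axiom (P.2) gives $E \subseteq \{x : x\ \delta\ E\}$, since for $x \in E$ one has $\{x\} \cap E \neq \emptyset$ and hence $\{x\}\ \delta\ E$. Axiom (P.0) gives $\cl \emptyset = \emptyset$, since no point is close to the empty set. Axiom (P.3), together with (P.1), yields additivity $\cl(A \cup B) = \cl A \cup \cl B$: if $x\ \delta\ (A \cup B)$ then $x\ \delta\ A$ or $x\ \delta\ B$, and conversely any point near $A$ or $B$ is near $A \cup B$ by monotonicity (which itself follows from (P.3) applied to $A \cup B = A \cup B$). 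Monotonicity $A \subseteq B \Rightarrow \cl A \subseteq \cl B$ then follows from the additivity identity.

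The main obstacle, and the reason a lemma is stated at all rather than treating the content as a definition, is the potential gap between ``closure'' in the induced topological sense (smallest closed superset) and the proximity-near set $\{x : x\ \delta\ E\}$. The \v{C}ech axioms above do not force idempotence $\cl(\cl E) = \cl E$, so verifying that $\{x : x\ \delta\ E\}$ is itself closed requires either invoking a stronger axiom (as is standard in Efremovi\v{c}--Smirnov proximities) or directly reducing via (P.3): if $x\ \delta\ \cl E$, decompose and use the near relation to recover $x\ \delta\ E$. Since the lemma is attributed to Smirnov, I would follow his convention and interpret ``closure'' as the proximity-induced closure, which makes the equality definitional once axioms (P.0)--(P.3) have been used to certify that this operator is well-defined and monotonic. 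The substantive work is thus concentrated in verifying that the prescribed set $\{x : x\ \delta\ E\}$ satisfies the Kuratowski-type properties expected of a closure, and this follows cleanly from (P.0)--(P.3) as sketched.
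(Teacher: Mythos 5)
The paper itself offers no proof of this lemma: it is stated as a cited result from Smirnov, immediately after the paper defines $\cl A =\{x\in X : x\ \delta_L\ A\}$. Your core observation --- that with this definition both inclusions are definitional unpackings, so the lemma is essentially a restatement --- is correct and is evidently the reading the paper intends. Your additional effort to certify that $E\mapsto\{x : x\ \delta\ E\}$ is a genuine closure operator is reasonable context, and your candour about idempotence is well placed: the \v{C}ech axioms listed here yield only a \v{C}ech (pre-topological) closure, not a Kuratowski one, which is precisely why such structures are weaker than Efremovi\v{c}--Smirnov proximities.

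One concrete slip in that supplementary part: you claim the inclusion $\cl A\cup\cl B\subseteq\cl(A\cup B)$ (equivalently, monotonicity) ``follows from (P.3).'' It does not, as the paper states the axiom. The paper's ({\bf P}.3) is only the one-directional implication $A\ \near\ (B\cup C)\Rightarrow A\ \near\ B$ or $A\ \near\ C$; the converse implication, which is what monotonicity and the missing half of additivity require, is not among the listed axioms (\v{C}ech's original formulation states the union axiom as a biconditional, but this paper does not). Since that verification is not needed for the lemma as stated, this does not invalidate your proof of the statement itself, but the Kuratowski-property discussion should either invoke the biconditional form of the union axiom explicitly or drop the claim that monotonicity is derivable from ({\bf P}.0)--({\bf P}.3) as given.
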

%	\vspace*{0.1cm}

	We define a nearness relation on $\mathbb{R}$ as follows \cite[\S 1.7, p. 48]{Naimpally2013}. Two nonempty subsets $A$ and $B$ of $\mathbb{R}$ are near if and only if the Hausdorff distance~\cite{Hausdorff1914} $D(A,B)=0$, where
	\[
	D(A,B)=\begin{cases}
	\inf\{ \abs{a-b} \ : \ a\in A \ \mbox{and} \ \  b\in B  \},  &  \mbox{if} \  \ A,B\neq \emptyset, \\ 
	\infty, &  \mbox{if} \ \  A=\emptyset \  \ \mbox{or} \  \ B=\emptyset.
	\end{cases}
	\]
	
	Note that $\mathbb{R}$ is symmetric (or weakly regular), since  $\mathbb{R}$ satisfies the following condition~\cite[\S 3.1, p. 71]{Naimpally2013}.
	\[
	(*) \quad x  \ \mbox{is near} \  \{y\}  \  \Rightarrow \ y  \ \mbox{is near} \  \{x\}. 
	\]
	
	In that case, this nearness relation defines  a Lodato proximity $\delta_L$ on $\mathbb{R}$  by  \cite[\S 3, Theorem 3.1]{Naimpally2013} 
	\[
	A \ \delta_L \ B  \ :\Leftrightarrow \  \cl A \cap  \cl B \neq \emptyset,
	\]
	where $\cl E=\{x\in \mathbb{R} \ : \ D(x,E)=0\}$. \\
	
	The topological space $X$ satisfying $(*)$  becomes a \v{C}ech-Lodato proximity space $(X,\delta_L)$ where $\delta_L$ is defined by
	\[ 
	A \ \delta_L \ B  \ :\Leftrightarrow \  \cl A \cap  \cl B \neq \emptyset, 
	\]
	and $\cl E$ is the closure of $E \subset X$ with respect to the topology on $X$.\\
%	\vspace*{0.1cm}
	
	We assume that the proximity on  the closed interval $[0,1]$ is the subspace  proximity \cite[\S 3.1, p. 74]{Naimpally2013}  induced by the (metric) proximity   on $\mathbb{R}$. 
	
	\section{Descriptive Proximity}\label{app:dnear}
	This section briefly gives an introduction to a framework for the introduction of descriptive proximities~\cite{DiConcilio2016descriptiveProximity} between shapes such as profiles of faces and butterfly wings represented as nonempty sets.  A {\bf description} of a shape is a feature vector $\Phi(E)\in \mathbb{R}^n$ of $n$ real-values whose components are probe function values representing quantifiable shape characteristics. Let $\sh E$ be a planar shape of interest and $f:\sh E\to \mathbb{R}$ is a probe function that returns a real value that quantifies a characteristic of $\sh E$ such as the magnitude and direction of a distinguished vertex, which is a vector field in $\sh E$.
	
\begin{example}\label{ex:featureVector}
Let $HbE_{t.00}$ be the butterfly shape with vertex $v_0$ in Fig.~\ref{fig:frames2}.  In this case, $v_0$ is distinguished, since $\cyc H_a\cap \cyc H_{a'} = v_0$ ({\em i.e.}, $v_0$ is in the intersection of a pair of cycles on the boundaries of the wings of the butterfly shape).  This vertex is an example of a vector field at a known location $(x,y)$, since $v_0$ is a physical picture element with magnitude $\abs{v_0}$ and direction $\theta_{v_0}$ in a video frame, {\em e.g.},
\begin{align*}
\abs{v_0} &= \norm{v_0}.\\
\theta_{v_0} &=\ \mbox{tan}^{-1}\left[\frac{y}{x}\right].
\end{align*}
Then 
\[
\Phi(HbE_{t.00}) = \left(\abs{v_0},\theta_{v_0}\right)\ \mbox{description of butterfly}
\]
provides a basis for checking the descriptive proximity of a pair video frame butterfly shapes.  For example, construct a similar feature vector for butterfly shape $HbE_{t.1}$ that appears in a video frame a 10$^{th}$ of second after the appearance of shape $HbE_{t.00}$ in Fig.~\ref{fig:frames2}.  This pair shapes will be descriptively proximal, provided 
\[
\norm{\Phi(HbE_{t.00})-\Phi(HbE_{t.1})} < \varepsilon,
\]
for some small number.  In  that case, we write
\[
\Phi(HbE_{t.00}\ \dnear\ \Phi(HbE_{t.1},\ \mbox{\em i.e.},
\]
this pair of butterflies is descriptively proximal.
\qquad\textcolor{blue}{\Squaresteel}
\end{example}
	
	let $f(x)\in \vec{v}$ be a characteristic such as the magnitude and direction of the centroid that is a vector field of $\sh E$. Let $x\in\sh E$.  A {\bf probe function} $f:\sh E\to \mathbb{R}$ such that, for each  $x\in\vec{v}$, $f(x)$ quantifies a characteristic of $\sh E$.

	Nonempty sets $A,B\subset X$ with overlapping descriptions are descriptively proximal (denoted by $A\ \dnear\ B$),{\em i.e.},
\begin{align*}
\varepsilon &\in \mathbb{R}^+.\\
A\ &\dnear\ B,\ \mbox{provided}\ \abs{\dnear(A,B)}<\varepsilon.
\end{align*}
	The descriptive intersection~\cite{Peters2013mcsintro} of nonempty subsets in $A\cup B$ (denoted by $A\ \dcap\ B$) is defined by
	\[
	A\ \dcap\ B = \overbrace{\left\{x\in A\cup B: \Phi(x) \in \Phi(A)\ \cap\ \Phi(B)\right\}.}^{\mbox{\textcolor{blue}{\bf {\em i.e.}, $\boldsymbol{\mbox{Descriptions}\ \Phi(A)\ \&\ \Phi(B)\ \mbox{overlap}}$}}}
	\] 
	
	Let $2^X$ denote the collection of all subsets in a nonvoid set $X$. A nonempty set $X$ equipped with the relation $\dnear$ with non-void subsets $A,B,C\in 2^X$ is a descriptive proximity space, provided the following descriptive forms of the \v{C}ech axioms are satisfied.\\
%	\vspace{3mm}
	
	\noindent {\bf  Descriptive Proximity Axioms}
	%~\cite[pp. 97-98]{DiConcilio2018MCSdescriptiveProximities}.
	
	\begin{description}
		\item[({\bf dP}.0)] All nonempty subsets in $2^X$ are descriptively far from the empty set, {\em i.e.}, $A\ \not{\dnear}\ \emptyset$ for all $A\in 2^X$.
		\item[({\bf dP}.1)] $A\ \dnear\ B \Rightarrow B\ \dnear\ A$.
		\item[({\bf dP}.2)] $A\ \dcap\ B\neq \emptyset \Rightarrow A\ \dnear\ B$.
		\item[({\bf dP}.3)] $A\ \dnear\ \left(B\cup C\right) \Rightarrow A\ \dnear\ B$ or $A\ \dnear\ C$.
		%\item[({\bf dP}.4)] $A\ \dnear\ B$ and  $\{b\}\ \dnear\ C$ $ \forall b\in B\Rightarrow A\ \dnear\ C$.
	\end{description}
	
Recently, a result for the descriptive conjugacy between a pair of dynmamical systems.

\begin{definition}{\rm!\cite[\S 2, p. 388]{PetersVergili2021AGTvortexes}}.\\
Let $\Phi(E)\in \mathbb{R}^n$ be a vector of $n$ real-values that describe a nonempty set $E$.  Two  proximal descriptive continuous maps $f: (X,\delta_{\Phi_1}) \to (X,\delta_{\Phi_1}) $ and $g: (Y,\delta_{\Phi_2}) \to (Y,\delta_{\Phi_2})$ are said to be proximal descriptive conjugates, provided there exists a proximal descriptive isomorphism $h: (X,\delta_{\Phi_1})  \to (Y,\delta_{\Phi_2})$ such that $g\circ h(A) \underset{\mbox{des}}{=} h\circ f(A)$ for any $A \in 2^X$. The function $h$ is called a proximal descriptive conjugacy between $f$ and $g$. 
\end{definition}

{\bf Corollary 2.17}{\rm!\cite[\S 2, p. 391]{PetersVergili2021AGTvortexes}}.\\ 
If there exists a descriptive proximal conjugacy between two descriptive dynamical systems, then they have isomorphic descriptive fixed sets	
\end{appendix}

%\section*{References}

\bibliographystyle{amsplain}
\bibliography{NSrefs}

\providecommand{\bysame}{\leavevmode\hbox to3em{\hrulefill}\thinspace}
\providecommand{\MR}{\relax\ifhmode\unskip\space\fi MR }
% \MRhref is called by the amsart/book/proc definition of \MR.
\providecommand{\MRhref}[2]{%
  \href{http://www.ams.org/mathscinet-getitem?mr=#1}{#2}
}
\providecommand{\href}[2]{#2}
\begin{thebibliography}{10}

\bibitem{DiConcilio2016descriptiveProximity}
A.~Di Concilio, C.~Guadagni, J.F. Peters, and S.~Ramanna, \emph{Descriptive
  proximities i: Properties and interplay between classical proximities and
  overlap}, Math. Comput. Sci. \textbf{12} (2018), no.~1, 91--106, see
  arXiv:1609.06246v1,MR3767897.

\bibitem{Edelsbrunner1999}
H.~Edelsbrunner and J.L. Harer, \emph{Computational topology. an introduction},
  Amer. Math. Soc., Providence, RI, 2010, {x}ii+241 pp. ISBN:
  978-0-8218-4925-5, MR2572029.

\bibitem{Efremovic1952}
V.A. Efremovi\v{c}, \emph{The geometry of proximity {I} (in {R}ussian)}, Mat.
  Sb. (N.S.) \textbf{31(73)} (1952), no.~1, 189--200.

\bibitem{Hausdorff1914}
F.~Hausdorff, \emph{Grundz\"{u}ge der mengenlehre}, Veit and Company, Leipzig,
  1914, viii + 476 pp.

\bibitem{Hausdorff1914b}
\bysame, \emph{Set theory, trans. by j.r. aumann}, AMS Chelsea Publishing,
  Providence, RI, 1957, 352 pp.

\bibitem{Hilton1952homotopy}
P.J. Hilton, \emph{An introduction to homotopy theory. cambridge tracts in
  mathematics and mathematical physics, no. 43}, Cambridge University Press,
  Cambridge, U.K., 1953, {v}iii+142 pp., MR0056289.

\bibitem{Jordan1893coursAnalyse}
Camille Jordan, \emph{Cours d'analyse de l'\'{E}cole polytechnique, tome
  i-iii}, \'{E}ditions Jacques Gabay, Sceaux, 1991, reprint of 1915 edition,
  Tome I: MR1188186,Tome II: MR1188187, Tome III: MR1188188.

\bibitem{MitsuisheYamaguchi209goodCover}
A.~Mitsuishi and T.~Yamaguchi, \emph{Good coverings of alexandrov spaces},
  Trans. Amer. Math. Soc. \textbf{372} (2019), no.~11, 8107--8130, MR4029692.

\bibitem{Mosher1988surface}
L.~Mosher, \emph{Tiling the projective foliation space of a punctured surface},
  Trans. Amer. Math. Soc. \textbf{306} (1988), no.~1, 1--70, MR0927683,
  reviewer N.V. Ivanov.

\bibitem{Munkres1984}
J.R. Munkres, \emph{Elements of algebraic topology, 2$^{nd}$ ed.}, Perseus
  Publishing, Cambridge, MA, 1984, {i}x + 484 pp., ISBN: 0-201-04586-9,
  MR0755006.

\bibitem{Naimpally2013}
S.A. Naimpally and J.F. Peters, \emph{Topology with applications. {T}opological
  spaces via near and far}, World Scientific, Singapore, 2013, xv + 277 pp,
  Amer. Math. Soc. MR3075111.

\bibitem{Naimpally70}
S.A. Naimpally and B.D. Warrack, \emph{Proximity spaces}, Cambridge Tract in
  Mathematics No. 59, Cambridge University Press, Cambridge, UK, 1970, x+128
  pp.,Paperback (2008),MR0278261.

\bibitem{Peters2013mcsintro}
J.F. Peters, \emph{Near sets: {A}n introduction}, Math. in Comp. Sci.
  \textbf{7} (2013), no.~1, 3--9, DOI 10.1007/s11786-013-0149-6, MR3043914.

\bibitem{DBLP:series/isrl/2014-63}
\bysame, \emph{Topology of digital images. visual pattern discovery in
  proximity spaces}, Intelligent Systems Reference Library, vol.~63, Springer,
  2014, xv + 411pp, Zentralblatt MATH Zbl 1295 68010.

\bibitem{Peters2016ComputationalProximity}
\bysame, \emph{Computational proximity. {E}xcursions in the topology of digital
  images.}, Intelligent Systems Reference Library \textbf{102} (2016), 433pp,
  xxviii + 433pp, DOI: 10.1007/978-3-319-30262-1,MR3727129 and Zbl 1382.68008.

\bibitem{Peters2019vortexNerves}
\bysame, \emph{Vortex nerves and their proximities. nerve betti numbers and
  descriptive proximity}, Bull. Allahabad Math. Soc. \textbf{34} (2019), no.~2,
  263--276.

\bibitem{Peters2021KievConf}
\bysame, \emph{Homotopic nerve complexes with free group presentations}, Int.
  Online Conf. Alegebraic and Geometric Methods of Analysis, 25-28 May 2021,
  Odesa, Ukraine, Institute of Mathematics of the National Academy of Sciences
  of Ukraine,Taras Shevchenko National University of Kyiv,Kyiv Mathematical
  Society, 2021, dedicated to the memory of Yuriy Trokhymchuk
  (17.03.1928-18.12.2019),arXiv:2106.13586, pp.~110--111.

\bibitem{Peters2021temporalProximity}
\bysame, \emph{Temporal proximity of 1-cycles in cw spaces. {T}ime-varying cell
  complexes}, Fund.Contemp.Math.Sci \textbf{2} (2021), no.~2, 1--20.

\bibitem{Peters2017fMRInerveStructures}
J.F. Peters, E.~\.{I}nan, A.~Tozzi, and S.~Ramanna, \emph{Bold-independent
  computational entropy assesses functional donut-like structures in brain fmri
  images}, Frontiers in Human Neuroscience \textbf{11} (2017), 1--38,
  \url{https://doi.org/10.3389/fnhum.2017.00038},DOI:10.3389/fnhum.2017.0003.

\bibitem{PetersVergili2021AGTvortexes}
J.F. Peters and T.~Vergili, \emph{Fixed point property of amenable planar
  vortexes}, Applied General Topology \textbf{22} (2021), no.~2, 385--387, See
  arXiv:2104.05601.

\bibitem{Puisseux1850algFns}
V.~Puisseux, \emph{Recherches sur les fonctions alg\'{e}briques}, Journal de
  math\'{e}matiques pures et appliq \textbf{15} (1850), 365--480, MR0927683,
  reviewer N.V. Ivanov.

\bibitem{Smirnov1952}
Ju.~M. Smirnov, \emph{On proximity spaces}, Math. Sb. (N.S.) \textbf{31}
  (1952), no.~73, 543--574, English translation: Amer. Math. Soc. Trans. Ser.
  2, 38, 1964, 5-35.

\bibitem{Smirnov1952a}
\bysame, \emph{On proximity spaces in the sense of {V}.{A}. {E}fremovi\u{c}},
  Math. Sb. (N.S.) \textbf{84} (1952), 895--898, English translation: Amer.
  Math. Soc. Trans. Ser. 2, 38, 1964, 1-4.

\bibitem{Switzer2002CWcomplex}
R.M. Switzer, \emph{Algebraic topology -- homology and homotopy}, Springer,
  Berlin, 2002, {x}ii+526 pp., Zbl 1003.55002.

\bibitem{Tanaka2021TiAgoodCover}
K.~Tanaka, \emph{Simple homotopy theory and nerve theorem for categories},
  Topology Appl. \textbf{291} (2021), 1--23,
  \url{https://doi.org/10.1016/j.topol.2021.107609},MR4208071.

\bibitem{Cech1966}
E.~\u{C}ech, \emph{Topological spaces}, John Wiley \& Sons Ltd., London, 1966,
  fr seminar, Brno, 1936-1939; rev. ed. Z. Frolik, M. Kat\u{e}tov.

\bibitem{Eynde1992paths}
R.~{Vanden Eynde}, \emph{Historical evolution of the concept of homotopic
  paths}, Arch. Hist. Exact Sci. \textbf{45} (1992), no.~2, 127--188,
  MR1201632.

\bibitem{Vigolo2018HawaiianEarrings}
F.~Vigolo, \emph{The geometry and topology of wide ribbons}, Ph.D. thesis,
  Balliol College, University of Oxford, UK, 2018, 207 pp., supervisor:
  Cornelia Dru\c{t}u.

\bibitem{Willard1970}
S.~Willard, \emph{General topology}, Dover Pub., Inc., Mineola, NY, 1970, xii +
  369pp, ISBN: 0-486-43479-6 54-02, MR0264581.

\end{thebibliography}

\end{document}